\documentclass[12pt, reqno]{amsart}
\usepackage{amssymb,amsmath,amsopn,textcomp, thmtools}

\usepackage[backgroundcolor=white, bordercolor=blue, 
linecolor=blue]{todonotes}
\usepackage[normalem]{ulem}

\usepackage[a4paper,margin=1in,footskip=0.25in]{geometry}
\usepackage[ansinew]{inputenc}
\usepackage[english]{babel}
\usepackage{mathrsfs}
\usepackage{setspace}
\usepackage[titletoc]{appendix}
\usepackage[colorlinks=true, linkcolor=blue, citecolor=blue]{hyperref}

\usepackage{mathtools}
\mathtoolsset{showonlyrefs}

\usepackage{dsfont}
\usepackage{bbm}

\usepackage{graphicx}
\usepackage{color}
\usepackage{caption,rotating}

\newtheorem{theorem}{Theorem}[section]

\newtheorem{proposition}[theorem]{Proposition}
\newtheorem{lemma}[theorem]{Lemma}
\newtheorem*{theorem*}{Theorem}

\theoremstyle{definition}
\newtheorem{definition}[theorem]{Definition}

\newtheorem{remark}[theorem]{Remark}
\newtheorem*{remark*}{Remark}

\renewenvironment{proof}[1][Proof]{\noindent\textit{#1.} }{\hfill 
	\rule{0.5em}{0.5em}}



\sloppy

\addto\extrasenglish{}
\addto\extrasenglish{} 

\numberwithin{equation}{section}



%


\usepackage{color}

\newcommand{\cN}{\mathcal{N}}
\newcommand{\cA}{\mathcal{A}}
\newcommand{\cM}{\mathcal{M}}
\newcommand{\cR}{\mathcal{R}}
\newcommand{\cE}{\mathcal{E}}
\newcommand{\cF}{\mathcal{F}}
\newcommand{\cQ}{\mathcal{Q}}
\newcommand{\cD}{\mathcal{D}}

\newcommand{\cB}{\mathcal{B}}
\newcommand{\cZ}{\mathcal{Z}}
\newcommand{\cX}{\mathcal{X}}
\newcommand{\bP}{\mathbb{P}}
\newcommand{\bE}{\mathbb E}
\newcommand{\bR}{\mathbb R}
\newcommand{\bI}{\mathbb I}

\def\scD{{\mathscr D}}
\def\scE{{\mathscr E}}
\def\scF{{\mathscr F}}

\def\scJ{{\mathscr J}}
\def\scA{{\mathscr A}}

\def\wt{\widetilde}
\def\e{{\mathrm{e}}}
\def\rd{\mathrm{d}}

\newcommand{\N}{\mathbb{N}}
\newcommand{\R}{\mathbb{R}}

\newcommand{\Rd}{{\mathbb{R}^d}}

\newcommand{\Z}{\mathbb{Z}}
\newcommand{\E}{\mathbb{E}}
\newcommand{\1}{\mathbbm 1}

\newcommand{\eps}{\varepsilon}

%
%
%
%
\def\nn{\nonumber} 
\newcommand{\switch}{\epsilon}
\newcommand{\HHq}[2]{\big(H_{#1}^{#2}\big)}

\newcommand{\Tj}[2]{\sum_{j={#1}}^{#2} \lengthR_j}
\newcommand{\TT}[2]{\Theta({#1},{#2})}

\newcommand{\lengthR}{\theta}

\renewcommand{\d}{\mathrm{d}}

\setlength{\parindent}{0ex}
\setlength{\parskip}{0ex}

\author{Jaehoon Kang }
\email{jhnkang@snu.ac.kr}
\address{Department of Mathematical Sciences, Seoul National University, Building 27, 1 Gwanak-ro, Gwanak-gu, Seoul 08826, Republic of Korea}

\author{Moritz Kassmann}
\email{moritz.kassmann@uni-bielefeld.de}
\address{Fakult\"{a}t f\"{u}r Mathematik, Postfach 100131, D-33501 Bielefeld, Germany}

\title[Markov processes of direction-dependent type]
{Heat kernel estimates for Markov processes \\ of direction-dependent type}

\begin{document}
	
\thanks{2020 \emph{Mathematics Subject Classification}. Primary 35K08, 60J76; Secondary 60J46, 35A08.\\ 
	\emph{Key words and phrases}. Markov jump process, diffusion process, heat kernel, fundamental solution, Dirichlet form.\\
	Financial support of the German Science Foundation through the International Research Training Group Bielefeld-Seoul IRTG 2235 is gratefully acknowledged. JK was supported by BK21 SNU Mathematical Sciences Division. MK gratefully acknowledges support by the Institut de Math\'{e}matiques de Toulouse.}

\begin{abstract}
We prove sharp pointwise heat kernel estimates for symmetric Markov processes associated with symmetric Dirichlet forms that are local with respect to some coordinates and nonlocal with respect to the remaining coordinates. The main theorem is a robustness result like the famous estimate for the fundamental solution of second order differential operators, obtained by Donald G. Aronson. Analogous to his result, we show that the corresponding translation-invariant process and the one given by the general Dirichlet form share the same pointwise points. 
\end{abstract}
	
	\maketitle

\setcounter{tocdepth}{1}	
\tableofcontents

\section{Introduction}

This work is devoted to the study of heat kernel estimates for a certain class of Markov processes. In the language of Partial Differential Operators, the work is concerned with pointwise estimates for linear parabolic operators with bounded measurable coefficients. Let us recall the famous robustness result of Aronson \cite{Aro68}. The Gauss-Weierstrass kernel $g$ on $(0,\infty) \times \R^d$, defined by
\begin{align*}
g(t,x) =  \frac{t^{-d/2}}{(2\pi)^{d/2}} \,  e^{\frac{-|x|^2}{2t}} ,
\end{align*}
on the one hand, is the density function of the transition probability for the Brownian Motion. On the other hand, it is the fundamental solution of the heat equation in $\R^d$. Given a uniformly elliptic operator of the form $\mathcal{A} = \sum\limits_{i,j=1}^d  \frac{\partial}{\partial x_i} \big( a_{ij}(x) \frac{\partial}{\partial x_j} \big)$, a fundamental result of \cite{Aro68} says that the corresponding fundamental solution $p_{\mathcal{A}}$ satisfies the pointwise bounds
\begin{align}\label{eq:aronson-bounds}
C_2 t^{-d/2} e^{\frac{-c_2 |x-y|^2}{t}} \,\leq\, p_{\mathcal{A}}(t,x,y) \, \leq\, C_1 t^{-d/2} e^{\frac{-c_1 |x-y|^2}{t}} 
\end{align}
for some positive constants $C_i$, $c_i$. The estimate \eqref{eq:aronson-bounds} can be interpreted as a robustness result because it says that every non-degenerate elliptic partial differential operator shares the same pointwise upper and lower bounds. Results of this type have been studied intensively for many operators in various metric measure spaces.

\medskip

An important step has been made by establishing robustness results of the above kind for Markov jump processes in \cite{BaLe02} and \cite{ChKu03}. Let $p_J(t,x,y)$ denote the fundamental solution of the operator
\begin{align}\label{eq:gen-operator-nonlocal}
u \mapsto \partial_t u - 
\mbox{p.v.} 
\int_{\R^d} \big( u(y) - u(x) \big) J(x,y) \; \d y\,,
\end{align}
where $J(x,y)$ is symmetric and satisfies for some $\alpha \in (0,2)$ and $c_1, c_2 > 0$ the relation $c_1 |x-y|^{-d-\alpha} \leq J(x,y) \leq c_2 |x-y|^{-d-\alpha}$ for all $x \ne y$. Similar to \eqref{eq:aronson-bounds}, the authors show that there are two positive generic constants $c_1, c_2$ such that for all $t > 0$ and $x,y \in \R^d$ the two-sided estimate
\begin{align}\label{eq:aronson-nonlocal}
c_{1} t^{-d/\alpha} \big(1 \wedge 
\tfrac{t}{|x-y|^\alpha}\big)^\frac{d+\alpha}{\alpha} \leq p_J(t,x,y) \leq c_{2} t^{-d/\alpha} \big(1 \wedge 
\tfrac{t}{|x-y|^\alpha}\big)^\frac{d+\alpha}{\alpha}  
\end{align}
holds true. Thus, the fundamental solutions $p_J$ turns out to be comparable with the fundamental solution of the fractional heat operator $\partial_t + (-\Delta)^{\alpha/2} $. Or, in the language of Probability Theory, it is proved in \cite{ChKu03} that the transition density function $p_J$ of the process corresponding to $J$ is comparable to the  transition density function of the rotationally symmetric $\alpha$-stable process. 

\medskip

The aim of the present work is to establish a similar robustness result for integro-differential operators that might be of second order with respect to some coordinates and of any positive order between $0$ and $2$ with respect to the remaining coordinates. In this sense, the operators are mixed local-nonlocal operators. Note that these operators are of a much more complicated nature than one gets when considering a superposition of local and nonlocal operators as for $-\Delta + (-\Delta)^{\alpha/2}$. The latter operator satisfies a Harnack principle, whereas the operators that we consider here, do not satisfy the classical Harnack inequlity. In our framework one cannot expect to obtain bounds for the heat kernel that are rotationally symmetric as the bounds in \eqref{eq:aronson-bounds} and \eqref{eq:aronson-nonlocal}. Such a setting has been already studied for pure jump processes with singular jump kernels in \cite{Xu13} and \cite{KKK22}. The first article establishes sharp lower bounds and some upper bounds. The second articles invents some self-improving mechanism and proves sharp upper bounds. In the present work we study integro-differential operators resp. Markov processes in the Euclidean space $\R^{d+n}$, where the process performs jumps within the first $d$ coordinates and is a diffusion in the remaining $n$ coordinates. It turns out that the presence of jump and diffusive behavior at the same time is a challenge for proving sharp upper bounds of the heat kernel.

\medskip

Heat kernel estimates are closely linked to the parabolic Harnack inequality.  A parabolic version of the famous Harnack inequality goes back to Hadamard and Pini. It has been studied for many generators of Markov processes and the question whether resp. in which spaces the parabolic Harnack inequality and the Aronson bounds are equivalent has stimulated an interesting field of research. Note that the Harnack inequality is not essential for our work. It fails in its classical local form for jump processes with singular jump kernels, e.g., it fails for positive solutions to $\partial_t + (-\partial^2_1)^{\alpha/2} + (-\partial^2_2)^{\alpha/2} = 0$ in $B_1 \subset \R^2$. Hence, in general, it fails for the operators resp. the processes that we study, too. 

\subsection*{Main results} Let us explain the set-up and the main results. Let $d, n\in \N$. For $x\in \R^{d+n}$, $x=(x^1, \dots, x^{d+n})=x^1\e^1+\cdots +x^{d+n}\e^{d+n}$, where $\e^i$ is the unit vector, we define $x', \wt x\in \R^{d+n}$ by
\begin{align}\label{eq:not-tilde}
x':=\sum_{i=1}^d x\cdot \e^i,\quad \wt x:=\sum_{i=d+1}^{d+n} x\cdot\e^i.
\end{align}
Then, $x=x'+\wt x$ and for any $x,y\in\R^{d+n}$, $|x-y|^2=|x'-y'|^2+|\wt x-\wt y|^2$.
For $i\in \{1,\ldots,d\}$ let $\R_i:=\{z\in \R^{d+n}| z=a\e^i \;\mbox{for some}\;a\in\R \}$. For $x,y\in\R^{d+n}$, define
\begin{align*}
\scJ(x,y):=
\begin{cases}
\vspace{1mm}
\displaystyle\frac{1}{|x^i-y^i|^{1+\alpha}},\quad&\mbox{if}\;\;y-x\in \R_i\setminus\{0\} \text{ for } i\in\{1, \ldots,d\} \,,\\
\hskip 1cm 0& \mbox{otherwise}.
\end{cases}
\end{align*} 
Let $m(\rd y)$ be the product measure on $\bigcup_{i=1}^d \R_i$ such that $m$ restricted to each $\R_i$ is the one-dimensional Lebesgue measure on $\R$. Let $Z=(Z^1,\dots,  Z^{d+n})$ be a Markov process on $\R^{d+n}$ such that $Z^1, \dots,  Z^{d+n}$ are independent and $Z^i$ is a $1$-dimensional symmetric $\alpha$-stable process for all $i\in \{1, \ldots,d\}$ and $Z^j$ is $1$-dimensional Brownian motion for all $j\in \{d+1, \ldots,d+n\}$. Then, $Z$ is mixed singular symmetric Markov process in the sense that each component process $Z^i$ is either a diffusion or pure jump process. The Dirichlet form $(\scE, \scF)$ associated with $Z$ is given as follows:
\begin{align*}
\scE(u,v)&=\int\limits_{\R^{d+n}}\hskip-.04in\bigg(\int\limits_{\R^{d+n}}(u(y)-u(x))(v(y)-v(x))\scJ(x, y)m(\rd y)+\sum_{i=d+1}^{d+n}\partial_{i}u(x)\partial_{i}v(x)\bigg)\rd x,\\
\scF&=\{u\in L^2(\R^{d+n}):\scE(u,u)<\infty\}.
\end{align*}
Using the argument in \cite{Xu13}, one can check that $(\scE, \scF)$ is a regular Dirichlet form.
Let $\kappa\ge1$ and $J:\R^{d+n}\times \R^{d+n}\to (0,\infty)$ be a symmetric function satisfying
\begin{align}\label{J_comp}
{\kappa^{-1}}\scJ(x, y)\le J(x, y)\le {\kappa }\scJ(x, y).
\end{align}
Let $\scA(x):=(a_{ij}(x))_{1\le i,j\le d+n}$ be a measurable $(d+n)\times (d+n)$ matrix-valued function on $\R^{d+n}$ satisfying the following condition: 
\begin{itemize}
\item[(H)] For $i,j\in \{1, \ldots, d+n\} $,  $a_{ij}:\R^{d+n}\to \R$ is a measurable function such that 
\begin{align}
&a_{ij}(x)=a_{ji}(x),\quad \mbox{for almost all}\;x\in\R^{d+n},\label{aij_symm}
\end{align}
and
\begin{align}
\kappa^{-1}\big|\wt\xi \big|^2&\le \sum_{i,j=1}^{d+n} a_{ij}(x)\xi^i\xi^j\le \kappa\big|\wt\xi \big|^2,\quad \mbox{for all}\;x, \xi\in\R^{d+n} \,, \label{diff_ellip}
\end{align}
where we make use of the notation introduced in \eqref{eq:not-tilde}.
\end{itemize} 

\begin{remark*}
Note that condition (H) implies $a_{ij}\equiv 0$ if either $i$ or $j$ is in $\{1, \ldots, d\}$.
\end{remark*}

Define the symmetric Dirichlet form $(\cE, \cF)$ by
\begin{align}\label{def:DF}\begin{split}
\cE(u,v)&:=\int_{\R^{d+n}}\int_{\R^{d+n}}(u(y)-u(x))(v(y)-v(x))J(x, y)m(\rd y)\rd x\\
&\qquad +\int_{\R^{d+n}} \nabla u(x)\cdot \scA(x)\nabla v(x)\rd x,\\
\cF&:=\{u\in L^2(\R^{d+n}):\cE(u,u)<\infty\}.
\end{split}\end{align}
Then, by \eqref{J_comp}--\eqref{diff_ellip}, for any $u\in L^2(\R^{d+n})$
\begin{align}\label{form_equiv}
\kappa^{-1}\scE(u,u)\le \cE(u,u)\le \kappa\scE(u,u).
\end{align}
Thus, $\scF=\cF$ and $(\cE, \cF)$ is also a regular Dirichlet form. The Markov process associated with $(\cE, \cF)$ is denoted by $X=(X^1, X^2, \dots, X^{d+n})$. We use $(X_t)^i:=X_{t}^{i}:=X_t\cdot \e^i$ for $t>0$ and $i\in \{1,2,\dots, d+n\}$.

\begin{theorem}\label{theo:main}
Let $d,n\in\N$. Suppose $J$ satisfies \eqref{J_comp} and the functions $a_{ij}$ satisfy (H). Let $(\cE, \cF)$ be the Dirichlet form given by \eqref{def:DF}. Then, there is a conservative Hunt process $X=(X_t, \bP^x, x\in \R^{d+n}, t \ge 0)$ associated with $(\cE, \cF)$ that starts at every point in $\bR^{d+n}$. Moreover,  $X$ has a continuous transition density function $p(t,x,y)$ on $(0,\infty)\times\bR^{d+n}\times\bR^{d+n}$, with the following estimates: there exist  $c, C\ge1$ such that for any $(t,x,y)\in (0,\infty)\times \R^{d+n} \times \R^{d+n}$,
\begin{align}\begin{split}\label{eq:main-estim}
&C^{-1}t^{-d/\alpha-n/2}\prod_{i=1}^d \left(1\wedge \frac{t^{1/\alpha}}{|x^i-y^i|}\right)^{1+\alpha}\prod_{i = d+1}^{d+n} \exp\left(-\frac{c\,|x^i-y^i|^2}{t}\right)\\
&\le p(t,x,y)
\le C t^{-d/\alpha-n/2}\prod_{i=1}^d \left(1\wedge \frac{t^{1/\alpha}}{|x^i-y^i|}\right)^{1+\alpha}\prod_{i=d+1}^{d+n} \exp\left(-\frac{|x^i-y^i|^2}{ct}\right).
\end{split}\end{align}
\end{theorem}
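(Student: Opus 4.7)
The plan is to proceed in three stages: first, to establish the process and its continuous transition density; second, to prove the upper bound in \eqref{eq:main-estim}; third, to prove the matching lower bound. The existence of the conservative Hunt process $X$ follows from standard Dirichlet form theory once regularity of $(\cE, \cF)$ is noted, which is a direct consequence of \eqref{form_equiv} together with the regularity of $(\scE, \scF)$ established as in \cite{Xu13}. Conservativeness follows from a standard Nash/volume argument, and continuity of $p(t,x,y)$ is obtained a posteriori from the pointwise bounds and H\"older regularity of caloric functions.

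It is instructive to record the model case first. When $J \equiv \scJ$ and $\scA$ is the constant matrix whose lower-right $n\times n$ block is the identity, the associated process is precisely the product process $Z$, and its transition density factorizes as
\[
q(t,x,y) = \prod_{i=1}^d p_\alpha(t, x^i - y^i) \prod_{i=d+1}^{d+n} (2\pi t)^{-1/2} \e^{-|x^i-y^i|^2/(2t)},
\]
where $p_\alpha$ is the one-dimensional symmetric $\alpha$-stable density, whose well-known two-sided bound $p_\alpha(t,r) \asymp t^{-1/\alpha}(1\wedge t^{1/\alpha}/|r|)^{1+\alpha}$ gives exactly the target shape of \eqref{eq:main-estim}. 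This furnishes a natural reference process against which to compare.

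For the upper bound I would adopt a Davies--Meyer approach adapted to the anisotropic scaling. A Nash-type inequality with the effective dimension $d/\alpha+n/2$ follows from \eqref{form_equiv} by combining the classical one-dimensional Nash inequalities across coordinates, which yields the on-diagonal bound $p(t,x,y) \leq C t^{-d/\alpha-n/2}$. To obtain off-diagonal decay in the diffusive coordinates I would apply Davies' exponential-weight method with a function $\psi$ depending only on the $\wt x$-coordinates; because the jump measure $J(x,y)m(\rd y)$ is supported on axis-parallel lines lying entirely in the $x'$-directions, such a $\psi$ satisfies $\psi(y)=\psi(x)$ on the support of $J(x,\cdot)m(\rd\cdot)$, so the conjugation produces no jump penalty and the perturbation of $\cE$ is controlled purely by \eqref{diff_ellip}, giving the Gaussian factors $\exp(-|x^i-y^i|^2/(ct))$ for $i>d$. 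For the jump coordinates I would apply Meyer's decomposition: split $J$ at a suitable scale $r$, treat the truncated part by iterating the on-diagonal estimate for the truncated semigroup, and dominate the large-jump contribution by a Duhamel expansion against the L\'evy system. The singular support of $\scJ$ prevents a na\"ive truncation from producing the sharp exponent $1+\alpha$ in each jump direction, so the self-improvement mechanism of \cite{KKK19} must be invoked. I expect the compatibility between this bootstrap and the Gaussian factors in the complementary coordinates to be the main technical obstacle.

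For the lower bound I would first establish a near-diagonal estimate $p(t,x,y) \geq c\,t^{-d/\alpha-n/2}$ on the anisotropic box $\{|x^i-y^i|\leq \eta t^{1/\alpha} : i\leq d\}\cap\{|x^i-y^i|\leq \eta t^{1/2} : i>d\}$, using the Nash upper bound, symmetry, and conservativeness through a Fabes--Stroock-type $L^2$ mean-value argument. The full lower bound then follows by a direction-by-direction chaining argument: in each diffusive coordinate, concatenating $O(|x^i-y^i|^2/(ct))$ near-diagonal steps produces the Gaussian factor $\exp(-c|x^i-y^i|^2/t)$; in each jump coordinate, the L\'evy system formula applied to $X$ and the comparison \eqref{J_comp} shows that with probability comparable to $t\,|x^i-y^i|^{-1-\alpha}$ the process executes a single large jump along the $i$-th axis at an appropriate time, producing the polynomial factor $(t^{1/\alpha}/|x^i-y^i|)^{1+\alpha}$. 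Combining these contributions along a suitable anisotropic space-time grid via the strong Markov property at the jump times yields the lower bound in \eqref{eq:main-estim}.
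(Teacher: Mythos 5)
Your outline matches the paper's strategy closely: Nash inequality for the on-diagonal bound, an exponential-weight (Davies) argument to extract the Gaussian decay in the diffusive coordinates, the KKK19 self-improvement for the sharp jump exponent $1+\alpha$, and a near-diagonal estimate plus anisotropic chaining plus a L\'evy-system jump argument for the lower bound; you also correctly anticipate that intertwining the jump bootstrap with the Gaussian factors is the crux. Two points where your description diverges in emphasis from what is actually done. First, the ``Meyer decomposition + Duhamel expansion'' you propose for the jump coordinates is not the engine of the sharp upper bound; Meyer's construction appears only in the proof of the survival estimate (\autoref{prop:survival}), while the sharp upper bound is driven by the Barlow--Grigor'yan--Kumagai exit-time inequality (\autoref{l:2.1}) applied to a decomposition of $\E^x[\1_{\{\tau\le t/2\}}P_{t-\tau}f(X_\tau)]$ over shells $A_k$, iterated over the conditions $\HHq{q}{l;\eta}$. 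You name this failure of a na\"ive truncation and invoke KKK19, so the idea is there, but the concrete mechanism is the exit-time lemma, not Duhamel. Second, your choice of Davies weight depending only on $\wt x$ (hence zero jump penalty) gives the Gaussian factor cleanly; the paper's \autoref{theo:uhk} instead uses a weight in all coordinates, obtaining the Gaussian together with a weak $(\alpha/3)$-decay in each jump direction. Both variants seed the bootstrap: that weak jump decay is used in the interpolation step \eqref{suhk1}, but since the interpolation only requires the resulting exponent $\beta'$ to exceed $\alpha^{-1}$, your weight would also suffice with $\beta$ taken sufficiently close to $1$.
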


\autoref{theo:main} proves pointwise robustness estimates for heat kernel estimates like the one of \cite{Aro68} for a diffusion (the case of $d=0$), the one of \cite{ChKu03} for isotropic jump processes and the one of \cite{Xu13, KKK22} for symmetric singular pure jump processes (the case of $n=0$).

\subsection*{Extensions} Let us discuss possible extensions and versions of our main result. The setting of \autoref{theo:main} seems to be restrictive because the first $d$ components of the stochastic process under consideration are of jump-type, whereas the last $n$ components form a non-degenerate diffusion. However, the analogous results holds true when considering a permutation $\sigma$ of the coordinates $\{1, \ldots, d+n\}$. One way to see this is by inspecting the proof. One could just replace $i,j, j_0 \in \{1, \ldots, d+n\}$ by $\sigma(i),\sigma(j), \sigma(j_0) \in \{1, \ldots, d+n\}$ in each step of the proof. But one could also establish the result by a formal consideration. To this end, one replaces the jump kernel $J$ and the coefficients $a_{ij}$ with the help of the perturbation by $J^\sigma$, $a_{ij}^\sigma$, e.g., $J^\sigma(x,y)=J(x^\sigma,y^\sigma)$. This leads to a new Dirichlet form $(\mathcal{E}^{\sigma}, D)$, where  $D =\{u\in L^2(\Rd)| \; \mathcal{E} (u,u)<\infty\}$ stays unchanged. We denote the Markov process corresponding to $(\mathcal{E}^\sigma, D)$ by $X^\sigma$, the corresponding semigroup by $P^\sigma$ and the corresponding heat kernel by $p^\sigma(t,x,y)$. The main observation now is
\begin{align}\label{eq:p-p_sigma}
p(t,x_0,y_0) = p^\sigma(t,x_0^\sigma,y_0^\sigma) \,
\end{align}
for all $t,x_0,y_0$.  Note that this property is not a mathematical triviality. Its proof is rather simple, though. \eqref{eq:p-p_sigma} follows once one has shown $P^\sigma_t f(x) = P_t (f \circ \sigma) (\sigma^{-1} (x))$ for every non-negative function $f$ and every $x \in \R^d$. This statements itself follows from the invariance of the Lebesgue measure with respect to permutations. A detailed discussion can be found at the end of the appendix in \cite{KKK22}.
 
\medskip

A second extension of \autoref{theo:main} concerns the type of jump process that is performed in the first $d$ coordinates.  The method of proof of \autoref{theo:main} allows us to obtain heat kernel estimates for different types of mixed processes. Consider the $d$-dimensional isotropic $\alpha$-stable process $Y$ and the $n$-dimensional Brownian motion $W$. Then, $\cZ=(Y,W)$ is a $d+n$-dimensional Markov process which is different from $Z$. Let $(\scE^{\mathrm{iso}}, \scF^{\mathrm{iso}})$ be the Dirichlet form associated with $\cZ$. Then,
\begin{align*}
\scE^{\mathrm{iso}}(u,v)&=\int_{\R^{d+n}}\int_{\R^{d+n}}(u(y)-u(x))(v(y)-v(x))\scJ^{\mathrm{iso}}(x, y)m_d(\rd y)\rd x\\
&\qquad+\int_{\R^{d+n}}\sum_{i\in \cA_2}\partial_{i}u(x)\partial_{i}v(x)\rd x,\\
\scF^{\mathrm{iso}}&=\{u\in L^2(\R^{d+n}):\scE^{\mathrm{iso}}(u,u)<\infty\},
\end{align*}
where
\begin{align*}
\scJ^{\mathrm{iso}}(x,y)=
\begin{cases}
\vspace{1mm}
\displaystyle\frac{1}{|x'-y'|^{d+\alpha}},\quad&\mbox{if}\;\;x'\neq y';\\
\hskip 1cm 0& \mbox{otherwise},
\end{cases}
\end{align*} 
and $m_d(\rd y)$ is the $d$-dimensional Lebesgue measure. Let $J^{\mathrm{iso}}:\R^{d+n}\times \R^{d+n}\to (0,\infty)$ be a symmetric function satisfying
\begin{align}\label{J_iso}
{\kappa^{-1}}\scJ^{\mathrm{iso}}(x, y)\le J^{\mathrm{iso}}(x, y)\le {\kappa }\scJ^{\mathrm{iso}}(x, y)
\end{align}
and define Dirichlet form
$(\cE^{\mathrm{iso}}, \cF^{\mathrm{iso}})$ by
\begin{align}\label{def:DF_iso}\begin{split}
\cE^{\mathrm{iso}}(u,v)&:=\int_{\R^{d+n}}\int_{\R^{d+n}}(u(y)-u(x))(v(y)-v(x))J^{\mathrm{iso}}(x, y)m_d(\rd y)\rd x\\
&\qquad +\int_{\R^{d+n}}\nabla u(x)\cdot \scA(x)\nabla v(x)\rd x,\\
\cF^{\mathrm{iso}}&:=\{u\in L^2(\R^{d+n}):\cE^{\mathrm{iso}}(u,u)<\infty\}.
\end{split}\end{align}
Then, by \eqref{J_iso}, \eqref{aij_symm} and \eqref{diff_ellip}, for any $u\in L^2(\R^{d+n})$
\begin{align*}
\kappa^{-1}\scE^{\mathrm{iso}}(u,u)\le \cE^{\mathrm{iso}}(u,u)\le \kappa\scE^{\mathrm{iso}}(u,u).
\end{align*}

\begin{theorem}\label{t:main2}
Let $d,n\in\N$. Suppose $J^{\mathrm{iso}}$ satisfies \eqref{J_iso} and the functions $a_{ij}$ satisfy (H). Let $(\cE^{\mathrm{iso}}, \cF^{\mathrm{iso}})$ be the Dirichlet form given by \eqref{def:DF_iso}. Then, there is a conservative Hunt process $\cX=(\cX_t, \bP^x, x\in \R^{d+n}, t \ge 0)$ associated with $(\cE^{\mathrm{iso}}, \cF^{\mathrm{iso}})$ that starts every point in $\bR^{d+n}$. Moreover,  $\cX$ has a continuous transition density function $p^{\mathrm{iso}}(t,x,y)$ on $(0,\infty)\times\bR^{d+n}\times\bR^{d+n}$, with the following estimates: there exist  $c, C\ge1$ such that for any $(t,x,y)\in (0,\infty)\times \R^{d+n} \times \R^{d+n}$,
\begin{align}\begin{split}\label{e:main2}
&C^{-1}t^{-d/\alpha-n/2}\left(1\wedge \frac{t^{1/\alpha}}{|x'-y'|}\right)^{d+\alpha}\exp\left(-\frac{c\,|\wt x-\wt y|^2}{t}\right)\\
&\le p^{\mathrm{iso}}(t,x,y)
\le C t^{-d/\alpha-n/2}\left(1\wedge \frac{t^{1/\alpha}}{|x'-y'|}\right)^{d+\alpha}\exp\left(-\frac{|\wt x-\wt y|^2}{ct}\right).
\end{split}\end{align}
\end{theorem}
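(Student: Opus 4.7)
The plan is to follow the proof of \autoref{theo:main} essentially line by line, substituting the axis-singular objects $(\scE, \scF, \scJ)$ with their isotropic counterparts $(\scE^{\mathrm{iso}}, \scF^{\mathrm{iso}}, \scJ^{\mathrm{iso}})$. The reference process $\cZ = (Y, W)$ has product heat kernel $p^Y(t, x', y') \, p^W(t, \wt x, \wt y)$, and the classical two-sided bounds for the isotropic $d$-dimensional $\alpha$-stable heat kernel combined with the Gaussian kernel produce exactly the right-hand side of \eqref{e:main2} in the unperturbed case. The first step is to verify that $(\cE^{\mathrm{iso}}, \cF^{\mathrm{iso}})$ is a regular Dirichlet form (same argument as in \cite{Xu13}), so that Fukushima's theory yields a conservative Hunt process $\cX$, and a Nash inequality---obtained by combining the standard fractional Sobolev inequality on $\R^d$ with respect to $m_d$ and the classical Nash inequality on $\R^n$---gives the on-diagonal estimate $p^{\mathrm{iso}}(t,x,x) \le C\, t^{-d/\alpha - n/2}$ together with joint continuity of $p^{\mathrm{iso}}$.

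The off-diagonal upper bound is obtained by Davies' method. I would choose weights $\psi(x) = \xi \cdot \wt x + \eta \cdot \varphi(x')$ with $\xi \in \R^n$, $\eta \in \R^d$, and $\varphi : \R^d \to \R^d$ a bounded Lipschitz approximation of the identity with $\|\nabla \varphi\|_\infty \le 1$. Condition (H) forces $a_{ij}$ to vanish whenever $i$ or $j$ is in $\{1,\ldots,d\}$, so the diffusive part of $\cE^{\mathrm{iso}}(e^{-\psi} u, e^\psi u)$ is bounded by $\kappa |\xi|^2 \|u\|_2^2$ with no cross-term in $\eta$. The jump part is bounded by $C |\eta|^\alpha \|u\|_2^2$ upon controlling $\int \bigl(e^{\psi(y)-\psi(x)}-1\bigr)^2 \scJ^{\mathrm{iso}}(x,y)\, m_d(dy)$; the isotropic structure of $\scJ^{\mathrm{iso}}$ on the $d$-dimensional slice makes this slightly simpler than the corresponding axis-singular bound in \autoref{theo:main}. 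Davies' lemma combined with the Nash on-diagonal bound and optimization over $(\xi, \eta)$ then produces the Gaussian factor in $\wt x - \wt y$ and the polynomial factor $\bigl(1 \wedge t^{1/\alpha}/|x'-y'|\bigr)^{d+\alpha}$ in $x' - y'$.

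For the lower bound I would run the chain argument from \autoref{theo:main}. A near-diagonal lower bound $p^{\mathrm{iso}}(t,x,y) \ge c\, t^{-d/\alpha-n/2}$ for $|\wt x - \wt y| \le c_0 \sqrt{t}$ and $|x' - y'| \le c_0 t^{1/\alpha}$ follows from the on-diagonal bound, conservativeness, and a Krylov--Safonov type exit-time estimate. The Gaussian tail in $\wt x - \wt y$ is then produced by Chapman--Kolmogorov chaining over $O(|\wt x - \wt y|^2/t)$ near-diagonal steps along the segment from $\wt x$ to $\wt y$. The polynomial tail in $x' - y'$ is produced by the Lévy system formula for $\cX$: one forces a single large jump in the $x'$-direction, controlled from below by $J^{\mathrm{iso}}(x,y) \ge \kappa^{-1} |x'-y'|^{-d-\alpha}$, while keeping $\cX$ inside a thin tube in the $\wt\cdot$-direction up to time $t$ by the near-diagonal estimate.

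The main obstacle, as in \autoref{theo:main}, is step two: one must find a Davies weight that is genuinely Gaussian in the diffusive coordinates while remaining bounded in the jump coordinates, and control the resulting quadratic form so that the two regimes decouple. The fact that (H) eliminates all cross-terms $a_{ij}$ with $i \le d$ or $j \le d$ is exactly what makes this decoupling possible; once this is in place, the isotropic jump kernel $\scJ^{\mathrm{iso}}$ is, if anything, more convenient than the axis-singular $\scJ$, and the remaining analytic and probabilistic arguments transfer from \autoref{theo:main} without essential change.
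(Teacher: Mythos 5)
Your outline is sound for the preliminary steps (regular Dirichlet form, Nash inequality, on-diagonal estimate, conservativeness) and for the lower bound (near-diagonal bound, Gaussian chaining, one large jump via the L\'evy system). These match what the paper does. However, the central claim in your step two --- that Davies' method with a truncated linear weight $\psi(x) = \xi\cdot\wt x + \eta\cdot\varphi(x')$ can be optimized to produce the sharp polynomial factor $\bigl(1 \wedge t^{1/\alpha}/|x'-y'|\bigr)^{d+\alpha}$ --- does not hold, and the paper explicitly warns against it.

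Here is the obstruction. With $\psi$ bounded by some $M$ in the jump coordinates, the carr\'e du champ picks up a contribution from large jumps of order $e^{2M}|x'-y'|^{-\alpha}$; for Davies' bound to be useful one must keep $t\,\Lambda(\psi)^2 \lesssim 1$, forcing $M \lesssim \tfrac{1}{2}\log\bigl(|x'-y'|^\alpha/t\bigr)$, which at best yields the factor $\bigl(t/|x'-y'|^\alpha\bigr)^{c}$ with a small universal $c$ (the paper gets $c=\alpha/3$ on the $\bigl(1 \wedge t^{1/\alpha}/|x'-y'|\bigr)$ scale in \autoref{theo:uhk}). No tuning of $\varphi$ recovers the exponent $d+\alpha$; the remark following \autoref{theo:uhk} notes precisely that it ``seems unclear how to modify the technique by Carlen--Kusuoka--Stroock in order to prove optimal off-diagonal upper bounds.'' The paper's actual route, which you must adopt, is the self-improving iteration of \autoref{sec:suhk}: start from the rough upper bound $\HHq{0}{0; n}$, use the time-splitting inequality of \autoref{l:2.1} (\cite[Lemma 2.1]{BGK09}) together with the survival estimate \autoref{prop:survival} and a dyadic decomposition in the jump variable, and bootstrap the polynomial exponent in small increments $\lambda_l$ up to the sharp value $1+\alpha^{-1}$ (equivalently, $d+\alpha$ on the $\bigl(1 \wedge t^{1/\alpha}/|x'-y'|\bigr)$ scale). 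In the isotropic setting of \autoref{t:main2} this scheme collapses to a single chain (the proof of \autoref{theo:main} for $d=n=1$), since $x'$ is treated as one block rather than $d$ separate coordinates; that is exactly what the paper means by ``analogous to the case $d=n=1$.'' Replacing your Davies step with this iteration gives a correct proof.
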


\begin{proof}
The proof of this important case is analogous to the proof of \autoref{theo:main} in the case $d=n=1$. Then, instead of \eqref{eq:main-estim} one establishes \eqref{e:main2}.
\end{proof}

\subsection*{Notation} Let us comment the notation that we are using. As is usual, $\N_0$ denotes the non-negative integers including Zero. 
For two non-negative functions $f$ and $g$, the notation $f\asymp g$ means that there are positive constants $c_1$ and $c_2$ such that $c_1g(x)\leq f (x)\leq c_2 g(x)$ in the common domain of definition for $f$ and $g$. For $a, b\in \R$, we use $a\wedge b$ for $\min \{a, b\}$ and $a\vee b$ for $\max\{a, b\}$. Given any sequence $(a_n)$ of real numbers and $n_1, n_2\in \N_0$, we set $\prod_{n=n_1}^{n_2} a_n$ (resp. $\sum_{n=n_1}^{n_2} a_n$) as equal to $1$ (resp. $0$) if $n_1>n_2$. As explained above, for $x\in\R^{d+n}$ we write $x'=(x^1, \dots, x^d, 0, \dots, 0)$ and $\wt x=(0, \dots, 0, x^{d+1}, \dots, x^{d+n})$. 

\subsection*{Related results} Let us discuss related results about Markov processes in the Euclidean space from the literature. Note that there are many results about fine pointwise estimates for the transition density function $p(t,x,y)$ in the case of L\'{e}vy processes. Here, we do  not comment on those results because we focus on robustness results, i.e. we consider inhomogeneous settings. In particular, the equality  $p(t,x,y)=p(t,x-y,0)$ does not generally hold in the framework that we study.  

\medskip

As discussed in the beginning of the introduction, our work is inspired by the development initiated in \cite{Aro68} for diffusions and \cite{ChKu03} for a large class of symmetric Markov jump processes whose jump kernels are comparable to isotropic functions. 

\medskip

The articles \cite{CKK08, CKK11} prove heat kernel bounds for finite range jump processes and jump processes with exponentially decaying jump kernels, respectively. Sharp bounds in the case of polynomially decaying jump kernels are established in \cite{BKKL19}. Note that \cite{BKKL19} also allows for small jumps with an intensity that is stronger than for any $\alpha$-stable process. Pointwise heat kernel bounds are proved in \cite{CK10} for Markov processes generated by  Dirichlet forms that are given as the sum of local and nonlocal forms.
 
\medskip

The aforementioned cases cover a wide range of Markov processes. However, they all satisfy the property that the transition density function $p(t,x,y)$ is comparable to some given functions $g_{1/2}$, which would depend on the specific case, in the following way:
\begin{align*}
g_1(t,|x-y|) \leq p(t,x,y) \leq g_2(t,|x-y|) \,,
\end{align*} 
for any $t>0$ and all $x,y$ in the corresponding space. But there are many interesting cases where these rotational bounds do not hold true. A very simple case is given by the process $Z=(Z^1, Z^2)$, where $Z^1, Z^2$ are independent one-dimensional stable processes. The transitions density function of this process, which is the fundamental solution for the operator $\partial_t + (-\partial^2_1)^{\alpha/2} + (-\partial^2_2)^{\alpha/2}$, is the product of two solutions acting in one dimension, thus not rotational at all. It is very interesting to investigate robustness results for operators based on these examples. Such a program has been initiated in \cite{Xu13}, where $d$-dimensional jump processes with singular jump kernels are considered. \cite{Xu13} establishes sharp heat kernel lower bounds and rough off-diagonal upper bounds. Sharp off-diagonal upper bounds are proved in \cite{KKK22} with the help of some involved iterative scheme. Thus, \cite{KKK22} finally establishes a robustness result along the ideas of \cite{Aro68}, \cite{ChKu03} for Markov jump processes with singular jump kernels.  In the present work we use the scheme developed in \cite{KKK22} for those directions in $\R^{d+n}$, for which the process under consideration is governed by a jump process. Note that the scheme has also been used in \cite{KW22} to show sharp heat kernel bounds for some more general singular jump processes replacing the stable process in each direction by one fixed subordinate Brownian Motion. See also \cite{CHZ23}, \cite{KW23} for corresponding results on the Dirichlet heat kernel.

\medskip

All of the aforementioned results on jump processes have led to a conjecture about the robustness question for jump processes, which is discussed in the introduction of \cite{KKK22}. The present work shows that this conjecture seems to hold also for processes that satisfy comparability of the jump intensity for some coordinates and comparability with a given non-degenerate diffusion in the remaining coordinates.

\subsection*{Organization of the article} In \autoref{sec:NDE}, we obtain near-diagonal upper and lower bounds for the heat kernel. Moreover, we prove two important auxiliary results. The first one establishes rough upper bounds for the heat kernel, see \autoref{theo:uhk}. The second one is a survival estimate with respect to cubes in the corresponding metric, see \autoref{prop:survival}. In \autoref{sec:lbe} we prove the off-diagonal lower bounds in \autoref{theo:offdiag-lower}. \autoref{sec:suhk} is the heart of this work. First, in \autoref{sec:upper-dn-one}, we consider the special case $d=n=1$ and give a detailed, fully self-contained proof of the upper bound in \eqref{eq:main-estim}. \autoref{sec:upper-dn-general} and \autoref{s:proof} are devoted to the case of general $d,n\in\N$. In \autoref{sec:upper-dn-general} we explain the strategy of the proof in this case and, to this end, provide an iterative scheme. In \autoref{s:proof} we discuss auxiliary results and their proofs. 

\subsection*{Remark} This article was completed and uploaded to www.arxiv.org in 2021. Since the proof of near-diagonal lower bounds of $p(t,x,y)$ in \autoref{prop:ndl} requires Hölder regularity of the heat kernel, the authors decided to wait with the publication of the article until this result, see Theorem 1.6 and Theorem 7.13 of \cite[Version 2]{ChKaWe19}, is available on www.arxiv.org.

\subsection*{Acknowledgments} The authors of this article thank Takashi Kumagai for very helpful discussions on the subject of the article.

\section{Near-diagonal estimates and auxiliary results}\label{sec:NDE}

In this section we collect some important results, which can be established in a rather direct fashion. First, we prove on-diagonal upper bounds in \autoref{prop:uhkd}. They allow us to prove some useful, yet not-sharp, upper off-diagonal bounds in \autoref{theo:uhk}. Since our Markov process under consideration has direction-dependent behavior, we compensate this behavior by choosing a corresponding metric. With respect to this metric we can establish another important result, \autoref{prop:survival}, which is known as a survival estimate. It directly leads to the on-diagonal bounds from below in \autoref{prop:ndl}. The last result is concerned with the mean exit time. We show that the mean exit time w.r.t. appropriate cubes and balls behaves like the one of an isotropic process in a Euclidean ball, see \autoref{t:meanexit}.

\medskip

Recall that $X$ is the Markov process associated with $(\cE, \cF)$. 
We first introduce the L\'evy system for our stochastic processes with singular jump kernels. For the proof, see \cite[Appendix A]{ChKu08}.
\begin{lemma}
For any $x\in \R^{d+n}$, stopping time $S$ (with respect to the filtration of $X$), and non-negative measurable function $f$ on $\R_+ \times \R^{d+n}\times \R^{d+n}$ with $f(s, y, y)=0$ for all $y\in\R^{d+n}$ and $s\ge 0$, we have 
	\begin{equation}\label{eq:LS}
	\E^x \left[\sum_{s\le S} f(s,X_{s-}, X_s) \right] = \E^x \left[ \int_0^S \left(\sum_{i=1}^{d}\int_{\R} 
	f(s,X_s, X_s+e^i h) 
	J(X_s,X_s+e^i h) \rd h \right) \rd s \right].
	\end{equation}
\end{lemma}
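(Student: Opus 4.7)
My plan is to view the statement as the standard Lévy system identity for the symmetric Hunt process $X$ associated with the regular Dirichlet form $(\cE,\cF)$. I would first extract the jumping measure of $X$ from the Beurling--Deny decomposition of $\cE$, and then invoke the abstract Lévy system formula for symmetric Markov processes, much as in \cite{ChKu08}.

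\textbf{Identifying the jumping measure.} The form $\cE$ in \eqref{def:DF} is already written as the sum of a strongly local part involving $\nabla u\cdot\scA(x)\nabla v$ and a purely jumping part
\begin{align*}
\cE^{(j)}(u,v)=\int_{\R^{d+n}}\int_{\R^{d+n}}(u(y)-u(x))(v(y)-v(x))J(x,y)\,m(\rd y)\,\rd x,
\end{align*}
with vanishing killing measure (consistent with $X$ being conservative). Reading the jumping kernel off $\cE^{(j)}$ and decomposing $m$ as a sum of one-dimensional Lebesgue measures along the axes $e^1,\dots,e^d$, I would take the Lévy system $(N,H)$ of $X$ to be given by $H_s=s$ together with
\begin{align*}
N(x,A)=\sum_{i=1}^{d}\int_{\R}\1_{A}(x+e^i h)\,J(x,x+e^i h)\,\rd h,
\end{align*}
for Borel sets $A\subset\R^{d+n}\setminus\{x\}$.

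\textbf{Passing from the Lévy system to the formula.} Once $(N,H)$ has been identified, the general Lévy system formula
\begin{align*}
\E^x\Big[\sum_{s\le S}f(s,X_{s-},X_s)\Big]=\E^x\Big[\int_0^S\int_{\R^{d+n}} f(s,X_s,y)\,N(X_s,\rd y)\,\rd s\Big]
\end{align*}
will hold for every stopping time $S$ and every non-negative Borel $f$ vanishing on the diagonal. The argument here is the standard one: first verify the identity for simple $f(s,x,y)=g(s)\1_{A}(x)\1_{B}(y)$ with $\ol A\cap \ol B=\emptyset$ by identifying the predictable compensator of the optional random measure associated with the jumps of $X$, and then extend to arbitrary $f$ by a monotone class argument. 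Substituting the explicit $N$ displayed above immediately gives \eqref{eq:LS}.

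\textbf{Main obstacle.} The only delicate point is that $m$, and hence the jumping measure, is singular with respect to $(d+n)$-dimensional Lebesgue measure, being supported on the $d$ coordinate lines through the current state. Abstract Lévy system theory accommodates general Radon jumping measures, so the identity itself is not in danger; what I would really have to work for is the verification that $(\cE,\cF)$ is a regular Dirichlet form admitting a nice Hunt modification and that the jumping measure appearing in its Beurling--Deny decomposition genuinely coincides with $J(x,y)\,m(\rd y)$. This verification, performed via approximation by regularised kernels as in \cite{Xu13}, is the step that really has to be imported from the literature; once it is in hand, the Lévy system formula follows as in \cite[Appendix~A]{ChKu08}, and the remainder of the proof is bookkeeping.
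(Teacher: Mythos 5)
Your proposal follows exactly the route the paper takes: the paper gives no proof, merely citing \cite[Appendix A]{ChKu08}, and your outline — reading the jumping measure $J(x,\cdot)\,m(\rd\cdot)$ off the nonlocal part of the Beurling--Deny decomposition, noting the absence of killing, and then applying the abstract L\'evy system identity via predictable compensators and a monotone class argument as in Chen--Kumagai — is precisely the content of that cited appendix. Your observation that the only genuinely delicate point is the singularity of $m$ and the regularity of $(\cE,\cF)$ (handled via \cite{Xu13}) is also consistent with the paper's setup.
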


\medskip

\begin{proposition}\label{prop:uhkd}
(i) There exists a constant $c>0$ such that for any $f\in \cF\cap L^1(\R^{d+n})$
\begin{align}\label{e:Nash}
\|f\|^{2+2(d/\alpha+n/2)^{-1}}_{2}\le c \,\cE(f,f)\|f\|_1^{2(d/\alpha+n/2)^{-1}}.
\end{align}
(ii) There is a properly exceptional set $\cN$ of $X$, a positive symmetric kernel $p(t,x,y)$ defined on $(0,\infty)\times (\bR^{d+n}\setminus\cN)\times(\bR^{d+n}\setminus\cN)$, and a constant $C>0$ such that $\bE^x[f(X_t)]= \int_{\R^{d+n}}p(t,x,y)f(y)\rd y$, and
\begin{align}\label{e:uhkd}
p(t,x,y)\le C t^{-d/\alpha-n/2}
\end{align}
for every $x,y\in\bR^{d+n}\setminus\cN$ and for every $t>0.$
\end{proposition}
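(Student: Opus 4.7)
The plan is to prove the Nash-type inequality \eqref{e:Nash} by Fourier analysis on the reference form $\scE$, and then to deduce part (ii) from the classical Nash $\Rightarrow$ ultracontractivity implication together with standard regular-Dirichlet-form machinery.

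Since \eqref{form_equiv} gives $\cE(f,f)\asymp \scE(f,f)$, it suffices to establish \eqref{e:Nash} for $\scE$. The process $Z$ is the product of $d$ independent one-dimensional symmetric $\alpha$-stable processes and $n$ independent one-dimensional Brownian motions, so its generator is the Fourier multiplier with symbol
$$
\psi(\xi)=c_\alpha\sum_{i=1}^{d}|\xi^i|^\alpha+\sum_{i=d+1}^{d+n}|\xi^i|^2,
$$
and Plancherel gives $\scE(f,f)=\int_{\R^{d+n}}\psi(\xi)|\hat f(\xi)|^2\rd\xi$ for $f\in\scF$. Write $\theta:=d/\alpha+n/2$. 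For $R>0$ split the Plancherel identity for $\|f\|_2^2$ according to whether $\psi(\xi)\le R$ or $\psi(\xi)>R$. On the high-frequency region we use $1\le R^{-1}\psi(\xi)$ to obtain $\int_{\psi>R}|\hat f|^2\rd\xi\le R^{-1}\scE(f,f)$. On the low-frequency region we use the bound $\|\hat f\|_\infty\le\|f\|_1$ together with the inclusion
$$
\{\psi\le R\}\subset\bigl\{|\xi^i|\le(R/c_\alpha)^{1/\alpha},\;i=1,\dots,d\bigr\}\cap\bigl\{|\xi^i|\le R^{1/2},\;i=d+1,\dots,d+n\bigr\},
$$
whose Lebesgue measure is bounded by $cR^{\theta}$. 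Combining,
$$
\|f\|_2^2\le c\bigl(R^{\theta}\|f\|_1^2+R^{-1}\scE(f,f)\bigr),
$$
and choosing $R$ so that the two terms are comparable, namely $R^{\theta+1}\asymp \scE(f,f)/\|f\|_1^2$, yields $\|f\|_2^{2(\theta+1)/\theta}\le c\,\scE(f,f)\|f\|_1^{2/\theta}$, which is \eqref{e:Nash}.

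For part (ii), the Nash-type inequality \eqref{e:Nash} is classically equivalent to the ultracontractivity estimate $\|P_tf\|_\infty\le Ct^{-\theta}\|f\|_1$ for the $L^2$-semigroup $P_t$ associated with $(\cE,\cF)$; one differentiates $t\mapsto \|P_tf\|_2^2$ along the semigroup and inserts \eqref{e:Nash}. By symmetry and Riesz representation this produces a non-negative symmetric integral kernel $p(t,x,y)$ on $\R^{d+n}\times\R^{d+n}$ satisfying the pointwise bound \eqref{e:uhkd}. The existence of a properly exceptional set $\cN$ outside of which the identification $\bE^x[f(X_t)]=\int p(t,x,y)f(y)\rd y$ holds pointwise is a standard consequence of the regularity of $(\cE,\cF)$, obtained by passing to quasi-continuous versions of $P_tf$. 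The only delicate point in the whole argument is the measure estimate for the sublevel set $\{\psi\le R\}$; the anisotropic scaling of $\psi$ forces the exponent $\theta=d/\alpha+n/2$, but the product structure of $\{\psi\le R\}$ makes this routine.
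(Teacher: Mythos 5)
Your proposal is correct, and for part (i) it takes a genuinely different route from the paper. The paper first records that the heat kernel $q(t,x,y)$ of the reference process $Z$ satisfies the on-diagonal bound $q(t,x,x)\asymp t^{-d/\alpha-n/2}$ (because $Z$ is a product of independent one-dimensional stable processes and Brownian motions, each with an explicit transition density), and then invokes Carlen--Kusuoka--Stroock \cite{CKS87}, Theorem~2.1, in the direction ``on-diagonal heat kernel bound $\Rightarrow$ Nash inequality'' for $\scE$, after which \eqref{form_equiv} transfers the inequality to $\cE$. You instead prove the Nash inequality directly from the Fourier representation $\scE(f,f)=\int\psi(\xi)|\hat f(\xi)|^2\rd\xi$ with $\psi(\xi)=c_\alpha\sum_{i\le d}|\xi^i|^\alpha+\sum_{i>d}|\xi^i|^2$, via the low/high frequency split and the sublevel-set estimate $|\{\psi\le R\}|\le cR^{d/\alpha+n/2}$, and then optimize over $R$. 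Both routes are sound; yours is more self-contained and elementary (it does not presuppose the heat kernel asymptotics for $Z$), while the paper's is shorter because it outsources the analysis to the known density of the translation-invariant reference process. Your optimization does reproduce the correct exponents $2+2\theta^{-1}$ and $2\theta^{-1}$ with $\theta=d/\alpha+n/2$. For part (ii) your argument is essentially the same as the paper's: Nash $\Rightarrow$ ultracontractivity $\Rightarrow$ existence of a symmetric kernel with the on-diagonal bound (paper: \cite{CKS87} again), and the existence of the properly exceptional set $\cN$ and the identification with the transition function of $X$ comes from the regular Dirichlet form theory (paper: \cite{BBCK09}, Theorem~3.1); where you write ``standard consequence of regularity,'' the paper cites that theorem explicitly. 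One small point worth noting in a fully detailed write-up: the degenerate cases $\scE(f,f)=0$ or $\|f\|_1=0$ should be dispatched first (both force $f=0$), and the Plancherel identity should be established on a core and extended by density, but these are routine.
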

\begin{proof}
Since $Z^1,\dots, Z^{d+n}$ are independent,  the heat kernel $q(t,x,y)$ for $Z$ satisfies
\begin{align*}
q(t,x,y)
\asymp  t^{-d/\alpha-n/2}\exp\left(-\frac{|\wt x-\wt y|^2}{4t}\right)\cdot\prod_{i=1}^{d}\left(1\wedge \frac{t^{1/\alpha}}{|x^i-y^i|}\right)^{1+\alpha}.
\end{align*}
Thus, by \cite[Theorem 2.1]{CKS87}, there exists $c_1>0$ such that for $f\in \cF\cap L^1(\R^{d+n})$, 
\begin{align*}
\|f\|^{2+2(d/\alpha+n/2)^{-1}}_{2}\le c_1 \,\scE(f,f)\|f\|_1^{2(d/\alpha+n/2)^{-1}}.
\end{align*}
Thus, \eqref{e:Nash} follows from \eqref{form_equiv}. 
Using \eqref{e:Nash}, \cite[Theorem 2.1]{CKS87} and \cite[Theorem 3.1]{BBCK09}, the second result follows.
\end{proof}

\begin{remark}\label{r:uhkd} In \cite{ChKaWe19}, the H\"older continuity for solutions of corresponding parabolic equations is established, see Theorem 1.6 and Theorem 7.13. Since $p(t,x,y)$ is a caloric function, i.e., it solves the corresponding heat-type equation, we may assume $\cN=\emptyset$, where $\cN$ is the properly exceptional set in \autoref{prop:uhkd}(ii). Thus, \eqref{e:uhkd} holds for all $x,y\in \R^{d+n}$ and $t>0$.
\end{remark}

\medskip

Next, we want to apply the Davies method in order to prove some off-diagonal upper bound. To this end, let $\rd\Gamma$ be the carr\'e du champ measure for $(\cE, \cF)$ . Then, $\cE(u,u)=\int_{\R^{d+n}}\rd \Gamma(u,u)$ and for $\psi\in\cF$,
\begin{align*}
\frac{\rd e^{-2\psi}\Gamma(e^{\psi},e^{\psi})}{\rd x}
&=\int_{\R^{d+n}}e^{-2\psi(x)}(e^{\psi(x)}-e^{\psi(y)})^2 J(x,y)m(\rd y) + e^{-2\psi(x)}\nabla e^{\psi(x)}\cdot \scA(x)\nabla e^{\psi(x)}\\
&=\int_{\R^{d+n}}(e^{\psi(y)-\psi(x)}-1)^2 J(x,y)m(\rd y) + \sum_{i,j\ge d+1}a_{ij}(x)\partial_i \psi(x)\partial_j \psi(x).
\end{align*} 
Define
\begin{align*}
\Gamma(f)(x)&:=\int_{\bR^{d+n}} (e^{f(y)-f(x)}-1)^2J(x,y)m(\rd y)+\sum_{i,j\ge d+1}a_{ij}(x)\partial_i f(x)\partial_j f(x),\\
\Lambda(f)^2&:=\|\Gamma(f)\|_{\infty}\vee\|\Gamma(-f)\|_{\infty},\\
E(t,x,y)&:=\sup\Big\{|f(x)-f(y)|-t\Lambda(f)^2: f\in \text{Lip}_c(\bR^d), \text{with}\;\Lambda(f)<\infty \Big\}.
\end{align*}
We use the Davies method to prove the following upper bound for the heat kernel. Although it is not sharp, it will play an important role in obtaining sharp upper bounds. 

\begin{theorem}\label{theo:uhk}
There exist $c, C \geq 1$ such that for all $t>0$ and $x,y\in\R^{d+n}$
\begin{align*}
p(t,x,y)\le C t^{-d/\alpha-n/2} \ \prod_{i=1}^{d}\bigg(1\wedge \frac{t^{1/\alpha}}{|x^i-y^i|}\bigg)^{\alpha/3} \ \exp\bigg(-\frac{|\wt x-\wt y|^2}{c\,t}\bigg)  .
\end{align*}
\end{theorem}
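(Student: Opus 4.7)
The plan is to apply the Davies perturbation method using the framework set up just above the statement. Combining the Nash-type inequality \eqref{e:Nash} with the on-diagonal bound \eqref{e:uhkd} for the twisted semigroup $e^{-f}P_t e^{f}$ (in the spirit of \cite{CKS87} and \cite{BBCK09}), one obtains for every admissible test function $f$ that
\begin{equation*}
p(t,x,y)\,\le\,C\, t^{-d/\alpha-n/2}\,\exp\!\bigl(-(f(y)-f(x))+t\,\Lambda(f)^2\bigr),
\end{equation*}
and therefore $p(t,x,y)\le C\, t^{-d/\alpha-n/2}\,e^{-E(t,x,y)}$. It suffices to construct admissible test functions that realize a lower bound on $E(t,x,y)$ matching the logarithm of the target exponent.

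A key structural observation is that $\Gamma$ decouples along the jump and diffusion coordinates: because $J(x,y)m(\rd y)$ is concentrated on axis-parallel jumps in the first $d$ coordinates, and because (H) forces $a_{ij}\equiv 0$ whenever either index lies in $\{1,\dots,d\}$, a test function of the product form $f(z)=\sum_{i=1}^{d}g_i(z^i)+\lambda\cdot(\wt z-\wt x)$ satisfies
\begin{equation*}
\Gamma(f)(z)\,\le\,\kappa\sum_{i=1}^{d}\int_{\R}\bigl(e^{g_i(z^i+k)-g_i(z^i)}-1\bigr)^2|k|^{-1-\alpha}\,\rd k\,+\,\kappa|\lambda|^2.
\end{equation*}
The diffusive piece is then handled by optimizing $\lambda\cdot(\wt y-\wt x)-t\kappa|\lambda|^2$ over $\lambda\in\R^n$, which produces the Gaussian factor $\exp\bigl(-|\wt x-\wt y|^2/(c\,t)\bigr)$.

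For each jump coordinate I would take a bounded, Lipschitz, truncated function $g_i(z^i)=\beta_i\,\varphi\bigl(\lambda_i(z^i-x^i)\bigr)$, where $\varphi$ is a fixed $1$-Lipschitz cutoff that vanishes at $0$ and is constant outside a bounded set. Splitting the integral at the scale $|k|\sim 1/\lambda_i$ and using the elementary inequalities $(e^s-1)^2\le s^2 e^{2|s|}$ for $|s|$ small and $(e^s-1)^2\le e^{2|s|}$ for $|s|$ bounded, one obtains
\begin{equation*}
\int_{\R}\bigl(e^{g_i(z^i+k)-g_i(z^i)}-1\bigr)^2|k|^{-1-\alpha}\,\rd k\,\le\,C\,e^{c\beta_i}\,\lambda_i^{\alpha},
\end{equation*}
while $g_i(y^i)-g_i(x^i)\asymp\beta_i$ as soon as $|x^i-y^i|\gtrsim 1/\lambda_i$. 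A three-parameter optimization in $\beta_i$, $\lambda_i$, and the truncation radius (balancing $t\,e^{c\beta_i}\lambda_i^\alpha$ against $1$ and $\beta_i$ against $\log(|x^i-y^i|/t^{1/\alpha})$) then produces a contribution of order $\tfrac{\alpha}{3}\log\!\bigl(|x^i-y^i|/t^{1/\alpha}\bigr)$ to $E(t,x,y)$ per jump coordinate, giving the stated polynomial factor.

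The main obstacle is precisely this last optimization: one must simultaneously control the exponential blow-up of $(e^{g_i}-1)^2$ for large $k$ (forcing $\beta_i$ to be small relative to $\log\lambda_i^{-1}$), the $\lambda_i^\alpha$ factor from small $k$ (forcing $\lambda_i\lesssim t^{-1/\alpha}$), and the growth $\beta_i$ needed to create polynomial decay of the heat kernel. These three competing constraints only allow one to capture roughly one-third of the sharp exponent $1+\alpha$, which is why this intermediate bound is deliberately non-sharp; the recovery of the correct exponent is the task of the iterative self-improving scheme of later sections.
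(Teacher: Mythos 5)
Your proposal follows the same Davies perturbation approach as the paper, with the same key observation that the singular axis structure of $J$ together with condition (H) makes $\Gamma$ decouple into per-coordinate contributions, and the same balancing of the exponential blow-up of $(e^{s}-1)^2$ against the $\lambda_i^\alpha$ factor that caps the achievable exponent at $\alpha/3$. The paper realizes your ``three-parameter'' optimization with a one-parameter tent function $\psi_i(\xi)=\lambda_i(R_i-|\xi^i-x^i|)\vee 0$ whose truncation radius $R_i$ and amplitude $\beta_i=\lambda_i R_i$ are both generated by the single choice $\lambda_i=(3R_i)^{-1}\log((R_i^\alpha/t)\vee 1)$, and it uses the same tent-form function (not a linear function, which would not lie in $\mathrm{Lip}_c$) for the diffusion coordinates; these are cosmetic, not substantive, differences.
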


\begin{remark*}
In the proof we apply ideas of \cite{Xu13} to the first $d$ coordinates. Thus we arrive at the exponent $\alpha/3$ for these components, which is far from the optimal exponent. It is remarkable that, up to now, it seems unclear how to modify the technique by Carlen-Kusuoka-Stroock in order to prove optimal off-diagonal upper bounds. See the corresponding comments in \cite{KKK22}.
\end{remark*}

\begin{proof} 
Fix $x=(x^1, x^2, \dots, x^{d+n})$ and $y=(y^1, y^2, \dots, y^{d+n})$. For $i\in \{1,2,\dots, d+n\}$, let 
\begin{align*}
R_i&=|x^i-y^i|,\quad
\lambda_i=
\begin{cases}
(3R_i)^{-1}\log\big(({R_i^{\alpha}}/{t})\vee1\big),\quad& i\in \{1,2,\dots, d\};\\
{R_i}/{((\kappa+1) t)},\quad& i\in \{d+1, d+2,\dots, d+n\},
\end{cases}\\
\psi_i(\xi)&=\lambda_i(R_i-|\xi^i-x^i|)\vee 0,\qquad\text{for}\quad\xi=(\xi^1,\xi^2, \dots,\xi^{d+n})\in\R^{d+n},
\end{align*} 
and $\psi(\xi)=\sum_{i=1}^{d+n}\psi_i(\xi)$. Then, we see that for all $i\in \{1,2,\dots, d+n\}$ and $\xi, \zeta\in\R^{d+n}$,
\begin{align}\label{e:eq1}
|\psi_i(\xi)-\psi_i(\zeta)|\le \lambda_i|\xi^i-\zeta^i|, \quad |e^{\psi_i(\xi)-\psi_i(\zeta)}-1|\le 2e^{\lambda_i R_i}.
\end{align}
Moreover, for $i,j\in \{1,2,\dots, d+n\}$ with $i\neq j$,
\begin{align}\label{pd01}
\psi_i(\xi+\e^j s)=\lambda_i(R_i-|\xi^i+(\e^j s)^i-x^i|)\vee 0=\lambda_i(R_i-|\xi^i+0-x^i|)\vee 0=\psi_i(\xi).\quad
\end{align}
By \eqref{J_comp}, \eqref{diff_ellip} and \eqref{pd01},
\begin{align*}
\Gamma(\psi)(\xi)&\le \kappa\left(\sum_{i=1}^{d}\int_{\R}\bigg(e^{\psi(\xi+\e^i s)-\psi(\xi)}-1\bigg)^2\scJ(\xi, \xi+\e^i s)\rd s+\sum_{i=d+1}^{d+n}|\partial_{i}\psi(\xi)|^2\right)\nn\\
&=\kappa\left(\sum_{i=1}^{d}\int_{\R}\bigg(e^{\psi_i(\xi+\e^i s)-\psi_1(\xi)}-1\bigg)^2\scJ(\xi, \xi+\e^i s)\rd s+\sum_{i=d+1}^{d+n}|\partial_{i}\psi_i(\xi)|^2\right)\\
&=:\kappa \left(\sum_{i=1}^{d} I_i+ \sum_{i=d+1}^{d+n}II_i\right).
\end{align*}
For $i\in \{1,2,\dots, d\}$, if $R_i^{\alpha}\le t$, then $\psi_i\equiv0$, and thus,  $I_i\equiv0$. Now,  consider the case that $R_i^{\alpha}>t$ for $i\in \{1,2,\dots, d\}$. Using  \eqref{e:eq1} and  $(e^{s}-1)^2\le s^2e^{2|s|}$ for all $s\in \R$, we see that,
\begin{align*}
I_i&=\int_{\R}\big(e^{\psi_i(\xi+\e^i s)-\psi_i(\xi)}-1\big)^2\scJ(\xi, \xi+\e^i s)\rd s\\
&=c\int_{|s|\le R_i}\big(e^{\psi_i(\xi+\e^i s)-\psi_i(\xi)}-1\big)^2\frac{\rd s}{|s|^{1+\alpha}}+c\int_{|s|> R_i}\big(e^{\psi_i(\xi+\e^i s)-\psi_i(\xi)}-1\big)^2\frac{\rd s}{|s|^{1+\alpha}}\nn\\
&\le c\int_{|s|\le R_i}\frac{\lambda_i^2|s|^2e^{2\lambda_i |s|}}{|s|^{1+\alpha}}\rd s+ce^{2\lambda_i R_i}\int_{|s|> R_i}\frac{1}{|s|^{1+\alpha}}\rd s\nn\\
&\le c \lambda_i^2 e^{2\lambda_i R_i}R_i^{2-\alpha}+c e^{2\lambda_i R_i}R_i^{-\alpha}\nn\\
&\le c e^{3\lambda_i R_i}R_i^{-\alpha}
= c \frac{R_i^{\alpha}}{t}R_i^{-\alpha}=c t^{-1}.
\end{align*}
Also, using $|\psi_i(\xi)-\psi_i(\zeta)|\le \lambda_i|\xi^i-{\zeta}^i|$ again, we have $II_i\le  \lambda_i^2$. Thus, 
\begin{align*}
\Gamma({\psi})(\xi)\le \kappa\left(cdt^{-1}+ \sum_{i=d+1}^{d+n}\lambda_i^2\right).
\end{align*}
By definition, $\psi(x)-\psi(y)=\sum_{i=1}^{d+n}\big(\psi_i(x)-\psi_i(y)\big)=\sum_{i=1}^{d+n}\lambda_i R_i$.
Thus, by \autoref{prop:uhkd}, \autoref{r:uhkd} and \cite[Theorem 3.25]{CKS87},
\begin{align*}
p(t,x,y)&\le C t^{-d/\alpha-n/2}\exp\left(-\sum_{i=1}^{d+n}\lambda_i R_i+\kappa t\Big(cdt^{-1}+  \sum_{i=d+1}^{d+n}\lambda_i^2\Big)\right)\\
&= C t^{-d/\alpha-n/2}\exp\left(cd\kappa-\sum_{i=1}^{d}\lambda_i R_i- \sum_{i=d+1}^{d+n}\Big(\lambda_iR_i-\kappa t\lambda_i^2\Big)\right)\\
&\le C t^{-d/\alpha-n/2} \prod_{i=1}^{d}\bigg(1\wedge\frac{t}{|x^i-y^i|^{\alpha}}\bigg)^{1/3} \prod_{i=d+1}^{d+n}\exp\bigg(-\frac{|x^i-y^i|^2}{(\kappa+1)^2 t}\bigg).
\end{align*}
\end{proof}

The following lemma provides auxiliary computations.  

\begin{lemma}
Let $d, n, k\in\N$ and $c>0$.\\
(i) There exists a positive constant $c_1=c_1(d, n, c, \alpha)$ such that for any $r, t>0$
\begin{align}\label{eq:ineq1}
\int_{\{y\in\R^{n}:|y|\ge r^{\alpha/2}\}} t^{-n/2}\exp\bigg(-\frac{c|y|^2}{t}\bigg)\rd y\le c_1
\Big(\frac{t}{r^{\alpha}}\Big)^{1+d/\alpha}.
\end{align}
(ii) There exists a positive constant $c_2=c_2(n, c)$ such that for any $t>0$
\begin{align}\label{eq:ineq2}
\int_{\R^{n}} t^{-n/2}\exp\bigg(-\frac{c|y|^2}{t}\bigg)\rd y\le c_2.
\end{align}
(iii) There exists a positive constant $c_3=c_3(n, c, k)$ such that for $0<a<b\le ka$,
\begin{align}\label{eq:ineq3}
\int^{t}_{0}s^{-n/2}\int_{\{y\in\R^{n}:a<|y|<b\}} \exp\bigg(-\frac{c|y|^2}{s}\bigg)\rd y\rd s\le c_3t\exp\bigg(-\frac{ca^2}{2t}\bigg).
\end{align}
\end{lemma}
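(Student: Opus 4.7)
The plan is to treat all three statements as routine Gaussian tail computations reduced to a single scaling substitution $u = y/\sqrt{s}$ (or $y/\sqrt{t}$).

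For (ii), I would simply change variables $u=y/\sqrt t$, which cancels the $t^{-n/2}$ prefactor and reduces the integral to $\int_{\R^n} e^{-c|u|^2}\,\rd u = (\pi/c)^{n/2}$; set $c_2$ equal to this value. For (iii), I would split the exponential as $e^{-c|y|^2/s} = e^{-c|y|^2/(2s)} \cdot e^{-c|y|^2/(2s)}$ and use $|y|\ge a$ on the annulus to bound the second factor by $e^{-ca^2/(2s)}$. The $y$-integral of the first factor over all of $\R^n$ is $\le C s^{n/2}$ (by scaling, as in part (ii)), so the $s^{-n/2}$ cancels; since $s\le t$ forces $e^{-ca^2/(2s)}\le e^{-ca^2/(2t)}$, the remaining $s$-integral is at most $t\,e^{-ca^2/(2t)}$. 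Note the constraint $b\le ka$ is not actually needed here (it would only be used to sharpen the constant via volume of the annulus).

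For (i), the same substitution $u=y/\sqrt t$ reduces the claim to
\begin{align*}
\int_{\{|u|\ge \rho\}} e^{-c|u|^2}\,\rd u \;\le\; c_1\,\rho^{-2-2d/\alpha},
\qquad \rho := r^{\alpha/2}/\sqrt t,
\end{align*}
since $\rho^{-2-2d/\alpha} = (t/r^\alpha)^{1+d/\alpha}$. I would split into two cases. If $\rho \le 1$, the left side is bounded by $\int_{\R^n} e^{-c|u|^2}\,\rd u$, a finite constant, while the right side is at least $c_1$; choose $c_1$ large enough to cover this regime. If $\rho > 1$, bound $e^{-c|u|^2}\le e^{-c\rho^2/2}\,e^{-c|u|^2/2}$ on $\{|u|\ge \rho\}$, giving $C e^{-c\rho^2/2}$, which decays faster than any polynomial in $\rho$ and is therefore dominated by a constant multiple of $\rho^{-2-2d/\alpha}$.

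There is no real obstacle here; the only point requiring a moment of thought is that the exponent $1+d/\alpha$ in (i) looks asymmetric with the $\R^n$-domain, but after the scaling substitution the exponent is arbitrary from the point of view of the proof (Gaussian tails beat any polynomial), and the two-case split absorbs the small-$\rho$ regime into the constant.
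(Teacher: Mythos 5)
Your proof is correct, and for parts (i) and (ii) it is essentially the same idea as the paper's: scale out $t$ (or $s$), observe that a Gaussian integral is a finite constant, and use that Gaussian tails dominate any polynomial. The main stylistic difference in (i) is that you split into $\rho\le 1$ and $\rho>1$ and absorb the polynomial directly, whereas the paper multiplies through by $(r^\alpha/t)^{1+d/\alpha}$, passes to spherical coordinates, and bounds $(r^\alpha/t)^{1+d/\alpha}\le (s^2/t)^{1+d/\alpha}$ on the domain of integration so that the extra power becomes part of the integrand $u^{1+2d/\alpha+n}e^{-cu^2}$; both give the same constant up to renaming. In part (iii) your argument is a genuine (modest) simplification: by writing $e^{-c|y|^2/s}=e^{-c|y|^2/(2s)}\cdot e^{-c|y|^2/(2s)}$, extracting $e^{-ca^2/(2s)}$ from the second factor, and integrating the first factor over all of $\R^n$ to cancel $s^{-n/2}$, you never need the annulus geometry at all, and as you correctly note the hypothesis $b\le ka$ becomes superfluous. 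The paper instead passes to spherical coordinates, bounds the annulus volume by $c(n)b^{n-1}(b-a)\le c(n)b^n\le c(n)k^n a^n$ using $b\le ka$, and then applies $u^{n/2}e^{-cu}\le c'e^{-cu/2}$ to trade the polynomial factor for a milder exponential; your route buys you a slightly cleaner proof and a slightly more general statement, at the cost of a (harmless) loss of explicitness in the constant $c_3$.
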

\begin{proof}
By using the spherical coordinate change and the change of variable $(s=\sqrt{t}u)$,
\begin{align*}
\int_{\{y\in\R^{n}:|y|\ge r^{\alpha/2}\}} & \Big(\frac{r^{\alpha}}{t}\Big)^{1+d/\alpha} t^{-n/2}\exp\bigg(-c\frac{|y|^2}{t}\bigg)\rd y\\
&=c(n)\int^{\infty}_{r^{\alpha/2}}\Big(\frac{r^{\alpha}}{t}\Big)^{1+d/\alpha} t^{-n/2}\exp\bigg(-c\frac{s^2}{t}\bigg)s^{n-1}\rd s \\
&=c(n)\int^{\infty}_{r^{\alpha/2}}\Big(\frac{r^{\alpha}}{t}\Big)^{1+d/\alpha} \Big(\frac{s^2}{t}\Big)^{n/2}\exp\bigg(-c\frac{s^2}{t}\bigg)s^{-1}\rd s \\
&\le c(n)\int^{\infty}_{r^{\alpha/2}} \Big(\frac{s^2}{t}\Big)^{1+d/\alpha+n/2}\exp\bigg(-c\frac{s^2}{t}\bigg)s^{-1}\rd s \\
&=c(n)\int^{\infty}_{(r^{\alpha}/t)^{1/2}}u^{1+2d/\alpha+n} \exp\big(-cu^2\big)\rd u\\
& \le c(n)\int^{\infty}_{0} u^{1+2d/\alpha+n}\exp\big(-cu^2\big)\rd u\le c_1, 
\end{align*}
which proves \eqref{eq:ineq1}. The proof of \eqref{eq:ineq2} using polar coordinates is standard. For \eqref{eq:ineq3}, using spherical coordinate change, we obtain
\begin{align*}
&s^{-n/2}\int_{\{y\in\R^{n}:a<|y|<b\}} \exp\bigg(-c\frac{|y|^2}{s}\bigg)\rd y
=c(n)s^{-n/2}\int_{a}^{b} \exp\bigg(-c\frac{u^2}{s}\bigg)u^{n-1}\rd u \\
&\le c(n)s^{-n/2}b^{n-1}\exp\bigg(-c\frac{a^2}{s}\bigg)(b-a)
\le  c(n)\left(\frac{b^2}{s}\right)^{n/2}\exp\bigg(-c\frac{a^2}{s}\bigg)\\
&\le  c(n)k^{n}\left(\frac{a^2}{s}\right)^{n/2}\exp\bigg(-c\frac{a^2}{s}\bigg).
\end{align*}
Since there exists $c'=c'(n,c)$ such that $u^{n/2}\exp(-cu)\le c'\exp(-cu/2)$ for all $u>0$,
\begin{align*}
&\int^{t}_{0}s^{-n/2}\int_{\{y\in\R^{n}:a<|y|<b\}} \exp\bigg(-c\frac{|y|^2}{s}\bigg)\rd y\rd s
\le c(n)k^{n}c'\int^{t}_{0}\exp\bigg(-\frac{c a^2}{2s}\bigg) \rd s\\
&\le c(n)k^{n}c'\exp\bigg(-\frac{c a^2}{2t}\bigg)\int^{t}_{0}\rd s= c_3t\exp\bigg(-\frac{c a^2}{2t}\bigg).
\end{align*}
\end{proof}

Next, let us introduce the truncated Dirichlet form $(\cE_\delta, \cF)$ together with its Markov process $X^{\delta}$  and the heat kernel $p^{\delta}(t,x,y)$.  For ${\delta}>0$, define $J_{\delta}(x,y):=J(x,y)\1_{\{|x'-y'|\le {\delta}\}}$
and for $u,v\in\cF$,
\begin{align*}
\cE_\delta(u,v)&:=\int_{\R^{d+n}}\int_{\R^{d+n}}(u(y)-u(x))(v(y)-v(x))J_\delta(x, y)m(\rd y) \rd x\\
&\qquad+\int_{\R^{d+n}}\nabla u(x)\cdot \scA(x)\nabla v(x)\rd x.
\end{align*}
Let $X^{\delta}$ be the Markov process associated with $(\cE_\delta, \cF)$ and $p^{\delta}(t,x,y)$ be the heat kernel for $X^{\delta}$. 
\begin{lemma}\label{l:J_outball}
There exists $C>0$ such that for any $x\in\R^{d+n}$,
\begin{align*}
C^{-1}\delta^{-\alpha}\le \int_{\R^{d+n}}\big(J(x,y)-J_{\delta}(x,y)\big)m(\rd y)\le C\delta^{-\alpha}.
\end{align*}
\end{lemma}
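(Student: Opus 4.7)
The plan is to reduce the integral against $m(\rd y)$ to a sum of one-dimensional integrals along the jump directions and then compare with the explicit computation for $\scJ$.

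First, observe that since $J_\delta(x,y) = J(x,y)\1_{\{|x'-y'|\le\delta\}}$, we have
\begin{align*}
\int_{\R^{d+n}}\big(J(x,y)-J_\delta(x,y)\big)\,m(\rd y) = \int_{\R^{d+n}}J(x,y)\,\1_{\{|x'-y'|>\delta\}}\,m(\rd y).
\end{align*}
By the structure of $m$ described after the definition of $\scJ$ (the measure on $\bigcup_{i=1}^d(x+\R_i)$ that restricts to one-dimensional Lebesgue measure along each axial ray through $x$) together with the fact that both $J(x,y)$ and $\scJ(x,y)$ vanish unless $y-x\in\R_i\setminus\{0\}$ for some $i\in\{1,\dots,d\}$, the integral collapses to
\begin{align*}
\int_{\R^{d+n}}\big(J(x,y)-J_\delta(x,y)\big)\,m(\rd y) = \sum_{i=1}^{d}\int_{\R}J(x,x+\e^i h)\,\1_{\{|h|>\delta\}}\,\rd h.
\end{align*}
Here I used that for $y=x+\e^i h$ with $i\in\{1,\dots,d\}$ we have $y'-x'=\e^i h$, hence $|x'-y'|=|h|$.

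Next I would apply the comparability \eqref{J_comp} pointwise. Since $\scJ(x,x+\e^i h)=|h|^{-1-\alpha}$ on the relevant set, each summand satisfies
\begin{align*}
\kappa^{-1}\int_{|h|>\delta}\frac{\rd h}{|h|^{1+\alpha}} \,\le\, \int_{\R}J(x,x+\e^i h)\,\1_{\{|h|>\delta\}}\,\rd h \,\le\, \kappa\int_{|h|>\delta}\frac{\rd h}{|h|^{1+\alpha}}.
\end{align*}
An elementary one-dimensional computation gives $\int_{|h|>\delta}|h|^{-1-\alpha}\rd h=\frac{2}{\alpha}\delta^{-\alpha}$, so summing in $i$ yields
\begin{align*}
\frac{2d}{\alpha\kappa}\,\delta^{-\alpha} \,\le\, \int_{\R^{d+n}}\big(J(x,y)-J_\delta(x,y)\big)\,m(\rd y) \,\le\, \frac{2d\kappa}{\alpha}\,\delta^{-\alpha},
\end{align*}
which is the claimed two-sided bound upon choosing $C:=\tfrac{2d\kappa}{\alpha}\vee\tfrac{\alpha\kappa}{2d}$.

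There is essentially no obstacle here: the statement is a direct consequence of the axial support of $m$ and the explicit form of $\scJ$. The only point requiring care is the unpacking of the measure $m$ to justify the reduction to axial integrals and the identification $|x'-y'|=|h|$, after which the estimate is the standard one-dimensional $\alpha$-stable tail computation.
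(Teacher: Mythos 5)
Your proof is correct and follows essentially the same route as the paper: reduce the integral against $m$ to a sum of $d$ one-dimensional axial integrals, apply the pointwise comparability \eqref{J_comp} to pass between $J$ and $\scJ$, and finish with the explicit tail computation $\int_{|h|>\delta}|h|^{-1-\alpha}\,\rd h=\tfrac{2}{\alpha}\delta^{-\alpha}$. You are merely a bit more explicit about unpacking the support of $m$ and about the final constant, and you carry out the lower bound rather than asserting it by symmetry.
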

\begin{proof}
Using \eqref{J_comp},
\begin{align*}
&\int_{\R^{d+n}}\big(J(x,y)-J_{\delta}(x,y)\big)m(\rd y)=\int_{|y'-x'|>\delta}J(x,y)m(\rd y)\\
&\le \kappa\int_{|y'-x'|>\delta}\scJ(x,y)m(\rd y)= \kappa\sum_{i=1}^{d}\int_{|y^i-x^i|>\delta}\frac{1}{|x^i-y^i|^{1+\alpha}}\rd y^i=\frac{\kappa d}{\alpha}\delta^{-\alpha}.
\end{align*}
The lower bound can be proved analogously, see also \cite[Lemma 4.1]{Xu13}.
\end{proof}

\medskip

Using \autoref{l:J_outball},
\begin{align*}
\cE(u,u)-\cE_\delta(u,u)&=\int_{\R^{d+n}}\int_{\R^{d+n}}(u(x)-u(y))^2J(x,y)\1_{\{|x'-y'|>\delta\}}\,m(\rd y)\rd x\\
&\le 4\int_{\R^{d+n}}u(x)^2\,\rd x \sup_x\int_{|y'-x'|>\delta}J(x,y)\,m(\rd y)\\
&\le c\, \delta^{-\alpha}\|u\|_2^2.
\end{align*}
Thus, we obtain by \eqref{e:Nash} and \cite[Theorem 3.25]{CKS87} that
\begin{align}\label{p_trunc1}
p^{\delta}(t,x,y)\le c t^{-d/\alpha-n/2}e^{c_1 t \delta^{-\alpha}-E_\delta(2t,x,y)},
\end{align}
where
\begin{align*}
\Gamma_{\delta}(f)(x)&:=\int_{\bR^{d+n}} (e^{f(x)-f(y)}-1)^2J_{\delta}(x,y)m(\rd y)+\sum_{i,j\ge d+1}a_{ij}(x)\partial_i f(x)\partial_j f(x),\\
\Lambda_{\delta}(f)^2&:=\|\Gamma_{\delta}(f)\|_{\infty}\vee\|\Gamma_{\delta}(-f)\|_{\infty},\\
E_{\delta}(t,x,y)&:=\sup\Big\{|f(x)-f(y)|-t\Lambda_{\delta}(f)^2: f\in \text{Lip}_c(\bR^d), \text{with}\;\Lambda_{\delta}(f)<\infty\Big \}.
\end{align*}

\medskip

The following definition of a square takes into account the direction-dependent behavior of our process. For $x\in\R^{d+n}$, we define a modified cube in $\R^{d+n}$ with ``radius'' $r$ by
\begin{align}\label{cube}
\begin{split}
\cQ(x,r) :=\{y\in\R^{d+n}:|x^i-y^i|< r, |x^j-y^j|< r^{\alpha/2} \text{ for } 1\le i\le d < j\le d+n\}.
\end{split}
\end{align}

We also define a $r$-neighborhood of $x$ in $\R^{d+n}$ by
\begin{align}\label{nbd}
\cB(x,r):=\{y\in\R^{d+n}:|x'-y'|< r, |\wt x-\wt y|< r^{\alpha/2}\}.
\end{align}

Then, $\cB(x,r)\subset \cQ(x,r)$ holds for all $x\in\R^{d+n}$ and $r>0$. Moreover, there exists $c=c(d, n, \alpha)$ such that $\cQ(x, cr)\subset \cB(x,r)$ holds for all $x\in\R^{d+n}$ and $r>0$. Indeed, for $c=d^{-1/2}\wedge n^{-1/\alpha}$, let $y\in \cQ(x, cr)$. Then, $|x^i-y^i|^2<(cr)^2$ for $i\in \{1,2,\dots, d\}$ and $|x^j-y^j|^2<(cr)^{\alpha}$ for $j\in \{d+1,d+2,\dots, d+n\}$. This implies that $|x'-y'|^2=\sum_{i=1}^{d}|x^i-y^i|^2<d(cr)^2\le r^2$ and $|\wt x-\wt y|^2=\sum_{j=d+1}^{d+n}|x^j-y^j|^2<n(cr)^{\alpha}\le r^{\alpha}$.
Thus,
\begin{align}\label{rel_cube_nbd}
\cQ(x, cr)\subset \cB(x,r)\subset \cQ(x,r).
\end{align}

The definition of $\cQ(x,r)$ allows us prove a survival estimate despite the fact that the process is highly anisotropic. 

\begin{proposition}\label{prop:survival}
There exists a constant $C>0$ such that 
\begin{align*}
\bP^x(\tau_{\cQ(x,r)}\le t)\le C tr^{-\alpha}
\end{align*}
for all $t, r>0$ and $x\in\R^{d+n}$. 
\end{proposition}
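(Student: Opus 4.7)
The plan is to split the exit event into an exit of the first $d$ (``jump'') coordinates and an exit of the last $n$ (``diffusion'') coordinates, and then treat each by different means. Writing $\cQ(x,r)=\cQ_1\times\cQ_2$ with $\cQ_1:=\prod_{i=1}^{d}(x^i-r,x^i+r)$ and $\cQ_2:=\prod_{j=d+1}^{d+n}(x^j-r^{\alpha/2},x^j+r^{\alpha/2})$, and letting $\tau_1,\tau_2$ denote the first exit times of the respective coordinate projections of $X$, we have $\tau_{\cQ(x,r)}=\tau_1\wedge\tau_2$ and therefore $\bP^x(\tau_{\cQ(x,r)}\le t)\le\bP^x(\tau_1\le t)+\bP^x(\tau_2\le t)$, so it suffices to bound each summand by $Ctr^{-\alpha}$.

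For the jump exit $\tau_1$, I would use the smooth bounded test function $\phi_1(y):=\sum_{i=1}^{d}f((y^i-x^i)/r)$, where $f\in C_b^{2}(\R)$ is nonnegative, satisfies $f(0)=0$ and $f\ge 1$ off $[-1,1]$, and has bounded first and second derivatives. Since $\phi_1$ depends only on the first $d$ coordinates and condition (H) forces $a_{ij}\equiv 0$ whenever $i\le d$ or $j\le d$, one has $L_{\mathrm{diff}}\phi_1\equiv 0$, so $\phi_1$ lies in the pointwise domain of the generator with $L\phi_1=L_{\mathrm{jump}}\phi_1$. Splitting the integral $\int_\R|f(s+h)-f(s)|\,|h|^{-1-\alpha}\rd h$ into $\{|h|\le r\}$ and $\{|h|>r\}$, and using \eqref{J_comp}, yields $\|L\phi_1\|_\infty\le Cr^{-\alpha}$. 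Since $\phi_1(x)=0$ and $\phi_1\ge 1$ on $\cQ_1^c\times\R^n$, Dynkin's formula (available via the martingale problem for $\phi_1\in C_b^2$ with $L\phi_1\in L^\infty$) gives $\bP^x(\tau_1\le t)\le\bE^x[\phi_1(X_{t\wedge\tau_1})]=\phi_1(x)+\bE^x\bigl[\int_0^{t\wedge\tau_1}L\phi_1(X_s)\,\rd s\bigr]\le t\|L\phi_1\|_\infty\le Ctr^{-\alpha}$.

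For the diffusion exit $\tau_2$, the mere measurability of the coefficients $a_{ij}$ prevents a pointwise definition of $L\phi$ for the natural cutoffs, and I would instead argue via an exponential martingale of Davies type. Fix $j\in\{d+1,\dots,d+n\}$ and $\lambda\in\R$, and set $\psi(y):=\lambda(y^j-x^j)$. Any jump of $X$ is of the form $he^k$ with $k\le d$ and therefore leaves the $j$-th coordinate fixed, so the jump part of the carr\'e du champ $\Gamma(\psi)$ vanishes identically; by (H), the diffusive part reduces to $a_{jj}(y)\lambda^2\le\kappa\lambda^2$. The Davies inequality, i.e.\ the same tool used in the proof of \autoref{theo:uhk} via \cite{CKS87}, then yields $\bE^y[\exp(\psi(X_t)-\psi(y))]\le\exp(\kappa\lambda^2 t)$ for every starting point $y$. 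Combined with the Markov property, this implies that $Z^\lambda_s:=\exp(\lambda(X_s^j-x^j)-\kappa\lambda^2 s)$ is a non-negative supermartingale under $\bP^x$. Doob's maximal inequality, followed by optimization in $\lambda$, then gives $\bP^x\bigl(\sup_{s\le t}|X_s^j-x^j|\ge r^{\alpha/2}\bigr)\le 2\exp(-r^\alpha/(4\kappa t))$. The elementary bound $ue^{-u}\le 1/e$ for $u\ge 0$ converts this to $\le Ctr^{-\alpha}$, and summing over $j>d$ completes the step.

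The main obstacle is the diffusion step: a direct Dynkin argument fails because $L\phi$ is not classically defined for the natural cutoff functions, so one must work through the quadratic form $\Gamma$ instead. The crucial algebraic feature that makes both steps go through cleanly is the block structure enforced by (H): by choosing a test function that depends on a single type of coordinate, either the diffusive part of the generator or the jump part of $\Gamma$ vanishes identically, and only the remaining manageable part has to be estimated.
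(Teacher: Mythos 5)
Your decomposition $\bP^x(\tau_{\cQ(x,r)}\le t)\le\bP^x(\tau_1\le t)+\bP^x(\tau_2\le t)$ is a genuinely different route from the paper's, which truncates the jump kernel at scale $\delta\asymp r$, proves a Gaussian-type off-diagonal bound for the truncated kernel $p^\delta$ via the Davies--Carlen--Kusuoka--Stroock method, upgrades it to an exit-time estimate via the strong Markov property, and finally removes the truncation via Meyer's construction. Unfortunately, the jump step of your argument has a gap that is fatal for $\alpha\ge 1$. You bound $\|L\phi_1\|_\infty$ by estimating $\int_\R|f(s+h)-f(s)|\,|h|^{-1-\alpha}\,\rd h$, but near $h=0$ one has $|f(s+h)-f(s)|\asymp |f'(s)|\,|h|$, so this integral diverges for $\alpha\ge 1$ at any $s$ with $f'(s)\ne 0$, and $f$ must have $f'\ne 0$ somewhere. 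The usual fix, second-order Taylor cancellation in the principal value, needs $h\mapsto J(y,y+he^i)$ to be approximately even in $h$; the hypotheses only give $J(x,y)=J(y,x)$, which relates $J(y,y+he^i)$ to $J(y+he^i,y)$ but says nothing about $J(y,y+he^i)-J(y,y-he^i)$. Concretely, take any bounded measurable $a$ with $|a|\le 1$ that is not (one-sided) H\"older and set $J(x,x+he^i)=|h|^{-1-\alpha}\bigl(1+\tfrac12\,a(x^i+h/2)\bigr)$: this $J$ is symmetric, satisfies \eqref{J_comp} with $\kappa=2$, yet the odd part of $h\mapsto J(y,y+he^i)$ is still of order $|h|^{-1-\alpha}$, so the principal value defining $L_{\mathrm{jump}}\phi_1(y)$ need not converge for $\alpha\ge 1$. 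Hence $\phi_1$ is not in the pointwise domain of the generator and Dynkin's formula is unavailable; this is exactly why the paper works with the Dirichlet form and carr\'e du champ of the \emph{truncated} process rather than with a pointwise generator.

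A secondary issue is the diffusion step: the Davies--CKS machinery as cited produces an $L^2\to L^2$ contraction of the conjugated semigroup $e^{-\psi}P_te^{\psi}$ (and, combined with the Nash inequality, a heat-kernel bound), but it does not directly deliver the pointwise moment bound $\bE^y[e^{\psi(X_t)-\psi(y)}]\le e^{\kappa\lambda^2 t}$ that your supermartingale $Z^\lambda_s$ requires; moreover $\psi$ is unbounded, so one must truncate and pass to a limit. The supermartingale property is classical and can be justified via the Fukushima decomposition, so this step is repairable with extra work. The jump step, by contrast, cannot be repaired within your framework without essentially reconstructing the paper's truncation-plus-Meyer argument, which is the device that lets the paper avoid any pointwise interpretation of the nonlocal generator.
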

\begin{proof}
Suppose $2t< r^\alpha$. 
Fix $x\in \R^{d+n}$ and $y\in \R^{d+n}\setminus\cB(x,r).$
Let ${\delta}=\frac{r \alpha}{3(d+\alpha)}$ and
\begin{align*}
R_1&=|x'-y'|,\quad \lambda_1=(3{\delta})^{-1}\log({{\delta}^{\alpha}}/{t}),\\
R_2&=|\wt x-\wt y|,\quad\lambda_2={R_2}/{((\kappa n+ 1)t)}.
\end{align*} 
Define
$$\psi'(\xi):=\lambda_1(R_1-|\xi'-x'|)\vee 0,\;\; \wt\psi(\xi):=\lambda_2(R_2-|\wt \xi-\wt x|)\vee 0,\quad \text{for}\;\;\xi\in\R^{d+n},$$
and $\psi(\xi):=\psi'(\xi)+\wt\psi(\xi)$.  Then, we observe that for $1\le i\le d$ and $d+1\le j\le d+n$,
\begin{align}\label{pd02}
\begin{split}
\psi'(\xi+\e^j s)&=\lambda_1(R_1-|(\xi+\e^j s)'-x'|)\vee 0=\lambda_1(R_1-|\xi'-x'|)\vee 0=\psi'(\xi),\\
\wt\psi(\xi+\e^i s)&=\lambda_2(R_2-|\wt{(\xi+\e^i s)}-\wt x|)\vee 0=\lambda_2(R_2-|\wt\xi-\wt x|)\vee 0=\wt\psi(\xi).
\end{split}
\end{align}
Using this, \eqref{J_comp} and \eqref{diff_ellip}, 
\begin{align*}
\Gamma_\delta({\psi})(\xi)&=\sum_{i=1}^{d}\int_{\R}\Big(e^{\psi(\xi+\e^i s)-\psi(\xi)}-1\Big)^2J_\delta(\xi, \xi+\e^i s)\rd s+\sum_{i=d+1}^{d+n}a_{ij}(x)\partial_i \psi(x)\partial_j \psi(x)\nn\\
&=\kappa\left(\sum_{i=1}^{d}\int_{|s|\le\delta}\Big(e^{\psi'(\xi+\e^i s)-\psi'(\xi)}-1\Big)^2\scJ(\xi, \xi+\e^i s)\rd s+\sum_{i=d+1}^{d+n}|\partial_{i}\wt\psi(\xi)|^2\right)\nn\\
&=:\kappa \sum_{i=1}^{d}I_i'+ \kappa\sum_{i=d+1}^{d+n} II_i.
\end{align*}
For $\xi, \zeta\in\R^{d+n}$, we have $|\psi'(\xi)-\psi'(\zeta)|\le \lambda_1|\xi'-\zeta'|$.
Using this and $(e^{s}-1)^2\le s^2e^{2|s|}$ for all $s\in \R$, we see that for $i\in \{1,2,\dots, d\}$
\begin{align*}
I_i'=\int_{|s|\le {\delta}}\Big(e^{\psi'(\xi+\e^i s)-\psi'(\xi)}-1\Big)^2\scJ(\xi, \xi+\e^i s)\rd s
\le\int_{|s|\le {\delta}}\frac{\lambda_1^2|s|^2e^{2\lambda_1 |s|}}{|s|^{1+\alpha}}\rd s
\le c e^{3\lambda_1 {\delta}}{\delta}^{-\alpha}.
\end{align*}
Also, using $|\wt\psi(\xi)-\wt\psi(\zeta)|\le \lambda_2|\wt\xi-\wt\zeta|$, we have $II_i\le  \lambda_2^2$ for $i\in \{d+1,d+2,\dots, d+n\}$. Thus, 
\begin{align*}
\Gamma_\delta(\psi)(\xi)\le \kappa\left(cde^{3\lambda_1 {\delta}}{\delta}^{-\alpha}+n\lambda_2^2\right),
\end{align*}
and
\begin{align*}
-E_{\delta}(2t,x,y)&\le -\lambda_1R_1-\lambda_2R_2+ \frac{\kappa cdt}{\delta^{\alpha}}e^{3\lambda_1 {\delta}}+\kappa nt\lambda_2^2\\
&=\kappa cd-\lambda_1R_1-\frac{R_2^2}{(\kappa+n)^2t}.
\end{align*}
Thus, by \eqref{p_trunc1} and $\delta^{\alpha}>2t$,
\begin{align}\label{trunc_upper}
p^{{\delta}}(t,x,y)&\le C t^{-d/\alpha-n/2}\exp(c t {\delta}^{-\alpha}-E_{\delta}(2t, x,y))\nn\\
&\le C t^{-d/\alpha-n/2}\exp\Big(c-\lambda_1R_1-\frac{R_2^2}{(\kappa n+1)^2t}\Big)\nn\\
&= C' t^{-d/\alpha}\exp\big(-\lambda_1|x'-y'|\big)\,t^{-n/2}\exp\Big(-\frac{|\wt x-\wt y|^2}{ct}\Big).
\end{align}
Let
\begin{align*}
 E_1&:=E_1(x,r):=\{y\in \R^{d+n}: |x'-y'|\ge r\},\\
 E_2&:=E_2(x,r):=\{y\in \R^{d+n}: |x'-y'|< r, \;|\wt x-\wt y|\ge r^{\alpha/2} \},
\end{align*} 
so that $\cB(x,r)^c=E_1\cup E_2$. Then,
\begin{align}\label{trunc_surv}
\bP^{x}\Big( X^{\delta}_t\in \cB(x,r)^c \Big)
&=\int_{y\in\cB(x,r)^c}p^{\delta}(t,x,y)\rd y\nn\\
&=\bigg(\int_{y\in E_1}+\int_{y\in E_2}\bigg) \,p^{\delta}(t,x,y)\rd y
=:\bI_1+\bI_2.
\end{align}
By \eqref{trunc_upper}, \eqref{eq:ineq2}, integration by parts and $\log(\delta^{\alpha}/t)>\log 2$,
\begin{align}\label{I1}
\bI_1&\le c t^{-d/\alpha}\int_{|x'-y'|\ge r}\exp\big(-\lambda_1|x'-y'|\big)\rd y'\int_{\R^{n}} t^{-n/2}\exp\Big(-\frac{|\wt x-\wt y|^2}{c\,t}\Big)\rd \wt y \nn\\
&\le c t^{-d/\alpha}\int^{\infty}_{r}\exp\big(-\lambda_1u\big)u^{d-1}\rd u\le ct^{-d/\alpha}\sum^{d}_{k=1}\frac{1}{\lambda_1^k}\exp(-\lambda_1 r)r^{d-k}\nn\\
&\le ct^{-d/\alpha}\sum^{d}_{k=1}\Big(\frac{3\delta}{\log 2}\Big)^k\Big(\frac{t}{\delta^{\alpha}}\Big)^{1+d/\alpha}r^{d-k}\le ct^{-d/\alpha}\sum^{d}_{k=1}r^k\Big(\frac{t}{r^{\alpha}}\Big)^{1+d/\alpha}r^{d-k}\nn\\
&\le ctr^{-\alpha}.
\end{align}
Also, by \eqref{trunc_upper} and \eqref{eq:ineq1}
\begin{align}\label{I2}
\bI_2&\le c \bigg(\int_{|x'-y'|< r}t^{-d/\alpha}\rd y'\bigg)\bigg(\int_{|\wt x-\wt y|\ge r^{\alpha/2}} t^{-n/2}\exp\Big(-\frac{|\wt x-\wt y|^2}{c\,t}\Big)\rd \wt y \bigg)\nn\\
&\le c t^{-d/\alpha}r^{d} t^{1+d/\alpha}r^{-\alpha-d}\nn\\
&= c tr^{-\alpha}.
\end{align}
Thus, by \eqref{rel_cube_nbd}, \eqref{trunc_surv}, \eqref{I1} and \eqref{I2}, for any $2t<r^{\alpha}$
\begin{align}\label{trunc_surv2}
\bP^{x}\big( X^{\delta}_t\in \cQ(x,r)^c \big)\le \bP^{x}\big( X^{\delta}_t\in \cB(x,r)^c \big)\le c tr^{-\alpha}.
\end{align}
Choose $c_1>2^{2/\alpha}$ so that $2(r/c_1)^{\alpha/2}<r^{\alpha/2}$ and $2(r/c_1)<r$. Then,
for any $z\in \cQ(x,r)^c$, $\cQ(z,r/c_1)\cap\cQ(x,r/c_1)=\emptyset$.
Thus, by the strong Markov property and \eqref{trunc_surv2}, for any $4t<(r/c_1)^{\alpha}$
\begin{align*}
\bP^{x}(\tau^{\delta}_{\cQ(x,r)}\le t)
&= \bP^{x}\big(\tau^{\delta}_{\cQ(x,r)}\le t, X^{\delta}_{2t}\in \cQ(x,r/c_1)^c\big)+\bP^{x}\big(\tau^{\delta}_{\cQ(x,r)}\le t, X^{\delta}_{2t}\in \cQ(x,r/c_1)\big)\\
&\le \bP^{x}\big(X^{\delta}_{2t}\in \cQ(x,r/c_1)^c\big)+\sup_{z\in \cQ(x,r)^c, s\le t}\bP^{z}\big(X^{\delta}_{2t-s}\in \cQ(z,r/c_1)^c\big)\\
&\le \bP^{x}\big(X^{\delta}_{2t}\in \cQ(x,r/c_1)^c\big)+\sup_{s\le t}\bP^{z}\big(X^{\delta}_{2t-s}\in \cQ(z,r/c_1)^c\big)\\
&\le c t r^{-\alpha}.
\end{align*}

From Meyer's construction and \autoref{l:J_outball},
\begin{align*}
\bP^{x}(X_s\neq X_s^{\delta}\;\text{for some}\;s\le t)\le t\sup_{z}\int_{\R^{d+n}} |J(z,y)-J_{\delta}(z,y)|m(\rd y)
\le c t r^{-\alpha}.
\end{align*}
Thus, for $4t<(r/c_1)^{\alpha}$,
\begin{align*}
\bP^{x}(\tau_{\cQ(x,r)}\le t)&\le \bP^{x}\big(\tau^{\delta}_{\cQ(x,r)}\le t\big)+\bP^{x}(X_s\neq X_s^{\delta}\;\text{for some}\;s\le t)\le c t r^{-\alpha}.
\end{align*}
For $4t\ge(r/c_1)^{\alpha}$, the result is trivial.
\end{proof}

\medskip

Finally, we can establish the desired on-diagonal lower bound. 

\begin{proposition}\label{prop:ndl}
There exist constants $c>0$ and $\eps>0$ such that
\begin{align*}
p(t,x,y)\ge c t^{-d/\alpha-n/2}\qquad\text{for}\;\;\;y\in \cQ(x, \eps t^{1/\alpha}).
\end{align*}
\end{proposition}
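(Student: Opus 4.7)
The plan is to split the argument into two steps, first establishing the on-diagonal bound $p(t,x,x)\ge c_1\,t^{-d/\alpha-n/2}$, and then extending to nearby $y$ by H\"older continuity. For the on-diagonal step I would apply \autoref{prop:survival} with $r_0:=\varepsilon_1 t^{1/\alpha}$ where $\varepsilon_1$ is chosen large enough (depending only on the constant $C$ of \autoref{prop:survival}) that $\bP^x(\tau_{\cQ(x,r_0)}\le t)\le 1/2$. Then $\int_{\cQ(x,r_0)} p(t,x,z)\,\rd z \ge \bP^x(X_t\in\cQ(x,r_0))\ge 1/2$, and Cauchy-Schwarz combined with the symmetry of $p$ and the Chapman-Kolmogorov identity $\int p(t,x,z)^2\,\rd z = p(2t,x,x)$ yields
\[
\tfrac{1}{2}\;\le\; \int_{\cQ(x,r_0)} p(t,x,z)\,\rd z \;\le\; |\cQ(x,r_0)|^{1/2}\,p(2t,x,x)^{1/2}.
\]
Since $|\cQ(x,r_0)| = 2^{d+n} r_0^{d+n\alpha/2} \asymp t^{d/\alpha+n/2}$, squaring produces $p(2t,x,x)\ge c\,t^{-d/\alpha-n/2}$, and after rescaling $t\mapsto t/2$ I obtain the on-diagonal lower bound uniformly in $x\in\R^{d+n}$.

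For the second step I would invoke the H\"older continuity of $p$ on $(0,\infty)\times\R^{d+n}\times\R^{d+n}$, established via caloric function estimates in \cite{CKW20} and already used in \autoref{r:uhkd}. This H\"older estimate is scale-invariant in the anisotropic metric encoded by $\cQ$, so there exist $\theta\in(0,1)$ and $C'>0$ with $|p(t,x,y)-p(t,x,x)|\le C'\,\eps^{\theta}\,t^{-d/\alpha-n/2}$ whenever $y\in\cQ(x,\eps t^{1/\alpha})$ and $\eps\le 1$. Choosing $\eps$ small enough that $C'\eps^{\theta}\le c_1/2$ and subtracting this oscillation estimate from the on-diagonal bound of step one gives $p(t,x,y)\ge (c_1/2)\,t^{-d/\alpha-n/2}$ for every $y\in\cQ(x,\eps t^{1/\alpha})$, which is the desired conclusion.

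The main obstacle is the second step: the H\"older-continuity input is external to this section and is precisely what converts an on-diagonal bound into a near-diagonal one. A purely self-contained route via the semigroup identity $p(t,x,y)=\int p(t/2,x,z)\,p(t/2,z,y)\,\rd z$ is conceivable---one would want to show that both $p(t/2,x,\cdot)$ and $p(t/2,\cdot,y)$ concentrate a definite fraction of their mass on a common subset of $\cQ(x,r)\cap\cQ(y,r)$ of measure $\asymp t^{d/\alpha+n/2}$---but carrying this through requires a delicate balancing of the constants from \autoref{prop:survival} and the on-diagonal upper bound from \autoref{prop:uhkd}, and in the absence of Harnack-type regularity these constants do not obviously align. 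Using the H\"older continuity from \cite{CKW20} bypasses this difficulty cleanly.
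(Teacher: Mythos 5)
Your proposal is correct and matches the paper's proof essentially step for step: the on-diagonal bound is obtained from the survival estimate (\autoref{prop:survival}) combined with the Chapman--Kolmogorov identity and Cauchy--Schwarz (the paper phrases the last step via Jensen's inequality, which is the same computation), and the extension to $y\in\cQ(x,\eps t^{1/\alpha})$ uses the H\"older continuity of $p(t,x,\cdot)$ from \cite{CKW20}, exactly as in the paper. The closing remark about a hypothetical self-contained alternative does not affect the validity of the argument.
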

\begin{proof}
By \autoref{prop:survival} and \eqref{rel_cube_nbd}, there exists $c_1>0$ such that
$$\bP^x(\tau_{\cQ(x,r)}<t)\le c_1 tr^{-\alpha}.$$
Using this, we see that
\begin{align*}
\int_{\R^{d+n}\setminus \cQ(x, (c_1 t)^{1/\alpha})}p(t/2, x,y)\rd y \le \bP^x(\tau_{\cQ(x, (c_1 t)^{1/\alpha})}< t/2)\le \frac12.
\end{align*}
Thus, by Jensen's inequality,
\begin{align}\label{odl}
p(t,x,x)&=\int_{\R^{d+n}}p(t/2, x,y)^2\rd y
\ge \int_{\cQ(x, (c_1 t)^{1/\alpha})}p(t/2, x,y)^2\rd y\nn\\
&\ge \frac{1}{|\cQ(x, (c_1 t)^{1/\alpha})|}\bigg(\int_{\cQ(x, (c_1 t)^{1/\alpha})}p(t/2, x,y)\rd y\bigg)^2
\ge c_2 t^{-d/\alpha-n/2}.
\end{align}
Note that $c_2$ is  independent of $t>0$ and $x\in \R^{d+n}$. On the other hand, by the H\"older continuity for $p(t,x,\cdot)$ proved in Theorem 1.6 and Theorem 7.13 of \cite{ChKaWe19}, we can take $\eps=\eps(c_2)$ such that
\begin{align*}
|p(t,x,y)-p(t,x,z)|\le \frac{c_2}{2} t^{-d/\alpha-n/2} \quad\text{for all}\;\;y,z\in \cQ(x, \eps t^{1/\alpha}).
\end{align*}
Thus, by \eqref{odl} and the above inequality for $y\in \cQ(x, \eps t^{1/\alpha})$,
\begin{align*}
p(t,x,y)\ge p(t,x,x)-\frac{c_2}{2}t^{-d/\alpha-n/2}\ge \frac{c_2}{2} t^{-d/\alpha-n/2}.
\end{align*}
\end{proof}

\begin{proposition}\label{conservative}
The process $X$ is conservative; that is, $X$ has infinite lifetime.
\end{proposition}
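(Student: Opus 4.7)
The plan is to leverage the survival estimate from \autoref{prop:survival} together with the fact that the Dirichlet form $(\cE,\cF)$ in \eqref{def:DF} has no killing part. Since the form consists only of the jump part (governed by $J$) and the diffusive part (governed by $\scA$), any loss of mass of the associated Hunt process $X$ on the locally compact space $\R^{d+n}$ can occur only through explosion to infinity. More precisely, if $\zeta$ denotes the lifetime of $X$, then on the event $\{\zeta < \infty\}$ one has $|X_s| \to \infty$ as $s \uparrow \zeta$, hence $X$ must exit every bounded set by time $\zeta$. In particular, for every fixed $x \in \R^{d+n}$, $r>0$ and $t>0$,
\begin{align*}
\{\zeta \le t\} \subseteq \{\tau_{\cQ(x,r)} \le t\} \qquad \bP^x\text{-a.s.}
\end{align*}

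Next I would apply \autoref{prop:survival} directly. It gives
\begin{align*}
\bP^x(\zeta \le t) \,\le\, \bP^x(\tau_{\cQ(x,r)} \le t) \,\le\, C\, t\, r^{-\alpha}
\end{align*}
for all $r>0$. Sending $r \to \infty$ for fixed $t$ yields $\bP^x(\zeta \le t) = 0$; then letting $t \to \infty$ gives $\bP^x(\zeta = \infty) = 1$ for every $x \in \R^{d+n}$, which is the definition of conservativeness.

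The main conceptual point (rather than an obstacle) is the justification of the inclusion $\{\zeta \le t\} \subseteq \{\tau_{\cQ(x,r)} \le t\}$, which rests on two standard facts: (a) the Dirichlet form in \eqref{def:DF} has no killing measure, so the sub-Markov semigroup is sub-Markov only through potential explosion; and (b) for a Hunt process on a locally compact Polish space with one-point compactification at infinity, explosion means the sample path leaves every compact set prior to the lifetime. Both are standard in the theory of regular Dirichlet forms, and everything else is a direct consequence of the already established survival estimate, so the argument is short.
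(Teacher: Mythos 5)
Your argument is correct and reaches the conclusion by a genuinely different route than the paper's. The paper deduces conservativeness in one line by combining the on-diagonal lower bound (\autoref{prop:ndl}) with the external criterion of \cite[Proposition~3.1(2)]{CKW17}, whereas you use the survival estimate (\autoref{prop:survival}) directly: from $\{\zeta\le t\}\subseteq\{\tau_{\cQ(x,r)}\le t\}$ and the uniform bound $\bP^x(\tau_{\cQ(x,r)}\le t)\le C t r^{-\alpha}$ you let $r\to\infty$ with $t$ fixed. This is more elementary and self-contained, since it avoids invoking an outside theorem; both approaches share \autoref{prop:survival} as the ultimate engine (the paper via \autoref{prop:ndl}). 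One small remark: the inclusion $\{\zeta\le t\}\subseteq\{\tau_{\cQ(x,r)}\le t\}$ does not actually require the ``no interior killing'' property of the Dirichlet form. By the usual convention for Hunt processes, $X_s=\Delta$ for $s\ge\zeta$ and $\Delta\notin\cQ(x,r)$, so $\tau_{\cQ(x,r)}\le\zeta$ holds in all cases, including an interior killing time. Thus your appeal to the Beurling--Deny structure (absence of a killing measure, hence $|X_s|\to\infty$ on $\{\zeta<\infty\}$) gives a stronger statement than what is needed for the inclusion; the conclusion is unaffected, and the remark is still conceptually relevant since it explains why conservativeness is plausible at all.
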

\begin{proof}
Since the Dirichlet form $(\cE, \cF)$ admits no killings inside $\R^{d+n}$, the result follows from
\autoref{prop:ndl} and \cite[Proposition 3.1(2)]{CKW21}.
\end{proof}

\begin{theorem}\label{t:meanexit}
(i) There exists a constant $c_1>0$ such that for $x_0\in \R^{d+n}$ and $r>0$,
\begin{align*}
\bE^x[\tau_{\cQ(x_0,r)}]\le c_1 r^{\alpha}
\end{align*}
for all $x\in \cQ(x_0,r)$.\\
(ii) There exists a constant $c_2>0$ such that for $r>0$,
\begin{align*}
\bE^x[\tau_{\cQ(x,r)}]\ge c_2 r^{\alpha}
\end{align*}
\end{theorem}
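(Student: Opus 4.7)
The plan is to derive (i) from the uniform on-diagonal upper bound together with an iteration argument, and (ii) from the survival estimate proved above.

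For part (i), I would start by combining the on-diagonal bound $p(t,x,y)\le Ct^{-d/\alpha-n/2}$ from \autoref{prop:uhkd}(ii) (extended to all points via \autoref{r:uhkd}) with the fact that the modified cube has volume $|\cQ(x_0,r)| = 2^{d+n}r^{d+n\alpha/2}$. This gives, uniformly in starting point $y\in\R^{d+n}$,
\[
\bP^y(X_t\in \cQ(x_0,r)) \;=\; \int_{\cQ(x_0,r)} p(t,y,z)\,\rd z \;\le\; C_1\,t^{-d/\alpha-n/2}\,r^{d+n\alpha/2} \;=\; C_1\,(r^{\alpha}/t)^{d/\alpha+n/2}.
\]
Setting $t = Ar^{\alpha}$ with $A$ sufficiently large yields $\bP^y(X_{Ar^{\alpha}}\in\cQ(x_0,r))\le 1/2$ uniformly in $y$. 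Since $\{\tau_{\cQ(x_0,r)}>t\}\subset \{X_s\in\cQ(x_0,r)\text{ for all }s\le t\}\subset\{X_t\in\cQ(x_0,r)\}$, this also bounds the probability of remaining in the cube.

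Next, I would iterate this one-step estimate via the (strong) Markov property. On the event $\{\tau_{\cQ(x_0,r)}>Ar^{\alpha}\}$ the process sits at some $X_{Ar^{\alpha}}\in\cQ(x_0,r)$ at time $Ar^{\alpha}$, so the uniform bound above gives $\bP^{X_{Ar^{\alpha}}}(\tau_{\cQ(x_0,r)}>Ar^{\alpha})\le 1/2$. Iterating,
\[
\bP^x(\tau_{\cQ(x_0,r)}>kAr^{\alpha})\;\le\; 2^{-k}\qquad(k\in\N),
\]
for every $x\in\cQ(x_0,r)$. Integrating the tail,
\[
\bE^x[\tau_{\cQ(x_0,r)}] \;=\; \int_0^\infty \bP^x(\tau_{\cQ(x_0,r)}>s)\,\rd s \;\le\; \sum_{k=0}^\infty Ar^{\alpha}\cdot 2^{-k} \;=\; 2Ar^{\alpha},
\]
which is the desired bound with $c_1 := 2A$.

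For part (ii), the argument is immediate from \autoref{prop:survival}. That proposition provides a constant $C>0$ such that $\bP^x(\tau_{\cQ(x,r)}\le t) \le Ctr^{-\alpha}$, hence $\bP^x(\tau_{\cQ(x,r)}>t) \ge 1 - Ctr^{-\alpha}$. Integrating up to $t_0 := r^{\alpha}/(2C)$ gives
\[
\bE^x[\tau_{\cQ(x,r)}] \;\ge\; \int_0^{t_0}\bP^x(\tau_{\cQ(x,r)}>s)\,\rd s \;\ge\; \int_0^{t_0}(1-Csr^{-\alpha})\,\rd s \;\ge\; \tfrac{1}{2}t_0 \;=\; \tfrac{1}{4C}\,r^{\alpha},
\]
so we may take $c_2 := 1/(4C)$.

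Neither part poses a substantial obstacle once \autoref{prop:uhkd} and \autoref{prop:survival} are in hand. The only minor subtlety is the uniformity in the starting point needed for the iteration in part (i); this is inherited directly from the fact that the on-diagonal heat kernel bound holds with a constant independent of $x$ and $t$, so the argument is not sensitive to the particular relation between $x$ and $x_0$ inside the cube.
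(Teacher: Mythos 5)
Your proof is correct and follows essentially the same route as the paper's: part (i) combines the on-diagonal bound from \autoref{prop:uhkd}(ii) (via \autoref{r:uhkd}) with the volume of the modified cube $\cQ(x_0,r)$, iterates the resulting one-step tail estimate via the Markov property, and sums the geometric tail; part (ii) integrates the survival estimate of \autoref{prop:survival}. The only cosmetic difference is in part (i): the paper passes to the complement $\bP^x(X_t\notin\cQ(x_0,r))\ge 1/2$ and explicitly invokes conservativeness to do so, whereas you bound $\bP^x(\tau_{\cQ(x_0,r)}>t)\le\bP^x(X_t\in\cQ(x_0,r))\le 1/2$ directly via the inclusion $\{\tau_{\cQ(x_0,r)}>t\}\subset\{X_t\in\cQ(x_0,r)\}$, which sidesteps the explicit appeal to conservativeness; both are valid here since \autoref{conservative} is already in hand.
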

\begin{proof}
(i) Let $C>0$ be the constant in \autoref{prop:uhkd}(ii). Take large $c_3$ so that $2C \le c_3^{d/\alpha+n/2}$. Then, for every $r>0$, $x_0\in\R^{d+n}$ and $x\in B(x_0,r)$, with $t:=c_3 r^{\alpha}$, we have by \autoref{prop:uhkd}(ii) and \autoref{r:uhkd}
\begin{align*}
\bP^x(X_t\in \cQ(x_0, r))=\int_{\cQ(x_0, r)} p(t,x,y)\rd y \le \frac{C |\cQ(x_0, r)|}{t^{d/\alpha+n/2}}= \frac{Cr^{d+n\alpha/2}}{c_3^{d/\alpha+n/2}r^{d+n\alpha/2}}\le \frac12.
\end{align*}  
Since $X$ is  conservative, it follows that for every $x\in \cQ(x_0, r)$,
\begin{align*}
\bP^x(\tau_{\cQ(x_0, r)}\le t)\ge\bP^x(X_t\notin \cQ(x_0, r))\ge\frac12,
\end{align*}  
which implies $\bP^x(\tau_{\cQ(x_0, r)}> t)\le 1/2.$ By the strong Markov property, for integer $k\ge1$,
\begin{align*}
\bP^x\big(\tau_{\cQ(x_0, r)}> (k+1)t\big)\le\bE^x\Big[\bP^{X_{kt}}(\tau_{\cQ(x_0, r)}> t);\tau_{\cQ(x_0, r)}>kt\Big]\le\frac12\bP^{x}(\tau_{\cQ(x_0, r)}> kt).
\end{align*}  
Using induction, we obtain that for every $k\ge1$,
\begin{align*}
\bP^x(\tau_{\cQ(x_0, r)}> kt)\le 2^{-k},
\end{align*}  
which implies
\begin{align*}
\bE^x[\tau_{\cQ(x_0, r)}]\le \sum^{\infty}_{k=0}t(k+1)\bP^x(\tau_{\cQ(x_0, r)}> kt)\le c_4 r^{\alpha}.
\end{align*}  

\noindent(ii) Let $C>0$ be the constant in \autoref{prop:survival} and  $t:={ r^{\alpha}}/{(2C)}$. By  \autoref{prop:survival},
\begin{align*}
\bE^x[\tau_{\cQ(x_0, r)}]\ge t\;\bP^x(\tau_{\cQ(x_0, r)}\ge t)
=t\big(1-\bP^x(\tau_{\cQ(x_0, r)}< t)\big)\ge t(1-1/2)=\frac{r^{\alpha}}{4C}.
\end{align*}
\end{proof}

\section{Off-diagonal lower bound}\label{sec:lbe}

In this section, we will prove sharp off-diagonal lower bounds for the heat kernel. The following lemma is a key ingredient for the lower bound estimate. 

\begin{lemma}\label{l:surv_lb}
There exist $C_1, C_2>0$ such that for all $x\in \R^{d+n}$ and $r>0$,
\begin{align*}
\bP^x(\tau_{\cQ(x,r)}>t)=\bP^x\Big(\sup_{s\le t} \big[ \bigvee_{i=1}^{d}|X_s^i-x^i|\vee \bigvee_{i=d+1}^{d+n}|X_s^i-x^i|^{2/\alpha} \big] \le r\Big)\ge C_1e^{-C_2tr^{-\alpha}}.
\end{align*}
\end{lemma}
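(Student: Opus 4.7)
The plan is to split on $t$: for $t \leq r^\alpha/(2C_0)$ (with $C_0$ the constant from \autoref{prop:survival}), that proposition already gives $\bP^x(\tau_{\cQ(x,r)} > t) \geq 1/2$, which dominates $C_1 e^{-C_2 t r^{-\alpha}}$ for any fixed $C_1 \leq 1/2$ and the eventual $C_2$. The remaining work is the regime $t > r^\alpha/(2C_0)$, where I would iterate the Markov property on the time scale $\sigma := c_5 r^\alpha$ for a small universal constant $c_5$ to be fixed.

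The geometric setup uses an interior sub-cube $\cQ^* := \cQ(x, c^* r)$, where $c^*$ is chosen small enough (any $c^* \leq 2^{-2/\alpha}$ works) that $\cQ(y, c^* r) \subset \cQ(x, r)$ for every $y \in \cQ^*$; the verification is a coordinatewise triangle inequality, treated separately for the $d$ ``jump-coordinates'' (side lengths $c^* r$) and the $n$ ``diffusion-coordinates'' (side lengths $(c^* r)^{\alpha/2}$). Once I have the one-step estimate
\begin{equation}\label{eq:onestep}
\inf_{y \in \cQ^*} \bP^y\bigl(\tau_{\cQ(x,r)} > \sigma,\ X_\sigma \in \cQ^*\bigr) \geq c_3 > 0,
\end{equation}
the strong Markov property applied successively at $\sigma, 2\sigma, \dots, k\sigma$, with $k := \lfloor t/\sigma \rfloor$ and the initial condition $x \in \cQ^*$, yields by induction
\[
\bP^x\bigl(\tau_{\cQ(x,r)} > k\sigma,\ X_{j\sigma} \in \cQ^* \text{ for all } 1 \leq j \leq k\bigr) \geq c_3^k.
\]
Since $k + 1 \geq t/\sigma$ and $c_3 < 1$, this gives $\bP^x(\tau_{\cQ(x,r)} > t) \geq c_3^{k+1} \geq C_1 e^{-C_2 t r^{-\alpha}}$ with $C_2 := c_5^{-1}\log(1/c_3)$ and $C_1 := c_3$.

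The main obstacle is \eqref{eq:onestep}. For $y \in \cQ^*$ I would use the decomposition
\[
\bP^y\bigl(\tau_{\cQ(x,r)} > \sigma,\ X_\sigma \in \cQ^*\bigr) \geq \bP^y(X_\sigma \in \cQ^*) - \bP^y\bigl(\tau_{\cQ(x,r)} \leq \sigma\bigr).
\]
The exit term is handled by the inclusion $\cQ(y, c^* r) \subset \cQ(x, r)$ and \autoref{prop:survival}, giving $\bP^y(\tau_{\cQ(x,r)} \leq \sigma) \leq C_0 \sigma (c^* r)^{-\alpha}$. For the return term, I integrate the on-diagonal lower bound from \autoref{prop:ndl}, namely $p(\sigma, y, z) \geq c\, \sigma^{-d/\alpha - n/2}$ for $z \in \cQ(y, \eps \sigma^{1/\alpha})$, over the intersection $\cQ(y, \eps \sigma^{1/\alpha}) \cap \cQ^*$. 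Since $y \in \cQ^*$ and both cubes are products of open axis-aligned intervals, each one-dimensional slice of the intersection has length at least half that of the corresponding slice of the smaller cube; hence the intersection has volume at least $2^{-(d+n)} |\cQ(y, \eps \sigma^{1/\alpha})|$, which scales precisely as $\sigma^{d/\alpha + n/2}$. Integration then yields a universal positive constant $c_4$ with $\bP^y(X_\sigma \in \cQ^*) \geq c_4$. Fixing $c_5$ small enough so that both $\eps \sigma^{1/\alpha} \leq c^* r$ and $C_0 \sigma (c^* r)^{-\alpha} \leq c_4/2$ then establishes \eqref{eq:onestep} with $c_3 = c_4/2$, completing the plan.
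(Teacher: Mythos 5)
Your proposal is correct and mirrors the paper's own proof: both iterate, on a time step of order $r^\alpha$, a one-step estimate that combines the near-diagonal lower bound (\autoref{prop:ndl}) for the probability of returning to a small interior cube with the survival estimate (\autoref{prop:survival}) for the probability of escaping $\cQ(x,r)$, and then convert $c_3^k$ to the exponential form. The only (harmless) cosmetic differences are your choice of the interior cube radius $c^* r$ with $c^* \le 2^{-2/\alpha}$ in place of the paper's $r/3$, your explicit treatment of the small-$t$ regime, and the spot where you write $k=\lfloor t/\sigma\rfloor$ but then use $c_3^{k+1}$ — one should really take $m=\lceil t/\sigma\rceil$ steps so that $m\sigma\ge t$ and $\{\tau>m\sigma\}\subset\{\tau>t\}$.
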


\begin{proof}
Fix $x\in \R^{d+n}$ and $r>0$. Let $a\in(0,1)$ be a constant which will be chosen later. Let $t_0=ar^{\alpha}$
and 
$$\scD:=\left\{X_{t_0}\in \cQ(x, r/3),\;\; \sup_{s\le t_0}\bigvee_{i=1}^{d}|X_s^i-x^i|\vee\bigvee_{i=d+1}^{d+n}|X_s^i-x^i|^{2/\alpha}\le 2r/3\right\},$$
where $X_s=(X_s^1, \dots,  X_s^{d+n})$. For $y\in \cQ(x,r/3)$, by \autoref{prop:ndl},
\begin{align*}
&\bP^y(X_{t_0}\in \cQ(x, r/3))=\int_{\cQ(x, r/3)}p(t_0, y,z)\rd z\ge \int_{\cQ(x, r/3)\cap \cQ(y,\eps t_0^{1/\alpha})}p(t_0, y,z)\rd z\nn\\
&\ge c_3(t_0)^{-d/\alpha-n/2}|\cQ(x, r/3)\cap \cQ(y, \eps(t_0)^{1/\alpha})|
\ge c_4(t_0)^{-d/\alpha-n/2}(\eps(t_0)^{1/\alpha})^{d+n\alpha/2}\nn\\
&=c_4\eps^{d+n\alpha/2},
\end{align*}
where the last inequality holds since $r=a^{-1/\alpha}t_0^{1/\alpha}\ge t_0^{1/\alpha}$.

On the other hand, for $c_5:=(2^{\alpha/2}-1)^{2/\alpha}/3$ and $y\in \cQ(x,r/3)$, we observe that
$\cQ(y, c_5r)\subset \cQ(x, 2r/3)$. Using this  and \autoref{prop:survival},
\begin{align*}
&\bP^y\left(\sup_{s\le t_0}\bigvee_{i=1}^{d}|X_s^i-x^i|\vee \bigvee_{i=d+1}^{d+n}|X_s^i-x^i|^{2/\alpha}> 2r/3\right)\\
&\le \bP^y(\tau_{\cQ(x, 2r/3)}\le t_0)\le \bP^y(\tau_{\cQ(y, c_5 r)}\le t_0)\le c_6 a,
\end{align*}
where $c_6$ is independent of $a$. Thus, we obtain that for any $y\in\cQ(x,r/3)$,
\begin{align*}
\bP^y(\scD)&\ge \bP^y(X_{t_0}\in \cQ(x, r/3))-\bP^y\left(\sup_{s\le t_0}\bigvee_{i=1}^{d}|X_s^i-x^i|\vee \bigvee_{i=d+1}^{d+n}|X_s^i-x^i|^{2/\alpha}> 2r/3\right)\\
&\ge c_4\eps^{d+n\alpha/2}-c_6a.
\end{align*}

 Choosing $a$ sufficiently small, we obtain that there exists $b\in(0,1)$ such that
$$\inf_{y\in\cQ(x,r/3)}\bP^y(\scD)\ge b.$$
Using this estimate and the Markov property, 
\begin{align*}
&\bP^x\Big(\sup_{s\le 2t_0}\bigvee_{i=1}^{d}|X_s^i-x^i|\vee \bigvee_{i=d+1}^{d+n}|X_s^i-x^i|^{2/\alpha}\le r\Big)\nn\\
&\ge\bP^x\Big(X_{t_0}\in \cQ(x, r/3),\;\;\sup_{s\le t_0}\bigvee_{i=1}^{d}|X_s^i-x^i|\vee \bigvee_{i=d+1}^{d+n}|X_s^i-x^i|^{2/\alpha}\le 2r/3,\nn\\ &\qquad\qquad X_{2t_0}\in \cQ(x, r/3),\;\;\sup_{t_0\le s\le 2t_0}\bigvee_{i=1}^{d}|X_s^i-x^i|\vee \bigvee_{i=d+1}^{d+n}|X_s^i-x^i|^{2/\alpha}\le 2r/3\Big)\nn\\
&\ge \bP^x(\bP^{X_{t_0}}(\scD), \scD)\nn\\
&\ge b^2.
\end{align*}
By induction, we get for $k\in\N$,
\begin{align*}
\bP^x(\tau_{\cQ(x,r)}>kt_0)=\bP^x\Big(\sup_{s\le kt_0}\bigvee_{i=1}^{d}|X_s^i-x^i|\vee \bigvee_{i=d+1}^{d+n}|X_s^i-x^i|^{2/\alpha}\le r\Big)\ge b^k.
\end{align*}
Now, for $t>0$, choose $k$ to be the smallest integer greater than $t/t_0$. Then,
\begin{align*}
\bP^x(\tau_{\cQ(x,r)}>t)\ge\bP^x(\tau_{\cQ(x,r)}>kt_0)\ge b^k\ge b^{t/t_0+1}=b\exp\Big(\frac{t}{t_0}\log b\Big)=b\exp\Big(\frac{\log b}{a}tr^{-\alpha}\Big).
\end{align*}
Thus, taking $c_1=b$ and $c_2=-\frac{\log b}{a}>0$, we obtain the result.
\end{proof}

\begin{theorem}\label{theo:offdiag-lower}
There exist constants $c, C \geq 1$ such that for all $t>0$ and $x,y \in \R^{d+n}$
\begin{align*}
p(t,x,y)\ge C\, t^{-d/\alpha-n/2}\exp\bigg(-\frac{|\wt x-\wt y|^2}{c \, t}\bigg) \prod_{i=1}^{d}\bigg(1\wedge \frac{t^{1/\alpha}}{|x^i-y^i|}\bigg)^{1+\alpha}.
\end{align*}
\end{theorem}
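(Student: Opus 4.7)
The plan combines three ingredients from earlier sections: the on-diagonal lower bound \autoref{prop:ndl}, the survival estimate \autoref{l:surv_lb}, and the L\'evy system formula \eqref{eq:LS}. The strategy is a chaining argument in the style of Chen and Kumagai, adapted to the direction-dependent setting: each jump coordinate in which $x^i$ and $y^i$ are far apart is handled by forcing a single big jump along the axis $\e^i$, while the diffusion coordinates are treated by Gaussian chaining.

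First, partition $\{1,\ldots,d\}$ into a near part $\scA_1 := \{i : |x^i - y^i| \leq t^{1/\alpha}\}$ and a far part $\scA_2 = \{i_1,\ldots,i_N\}$ with $N := |\scA_2|$. In the fully near-diagonal regime $N=0$ and $|\wt x - \wt y|^2 \leq t$, one has $y \in \cQ(x, c\, t^{1/\alpha})$ for suitable $c$, all factors on the right-hand side are comparable to constants, and the bound reduces to \autoref{prop:ndl}.

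For the general case, set $M := \lceil |\wt x - \wt y|^2/t \rceil \vee 1$ and $s := t/(N+M+1)$, and construct a sequence $z_0 = x, z_1, \ldots, z_{N+M} = y$ such that consecutive $z_k$ differ either by a single coordinate change of size $|x^{i_k} - y^{i_k}|$ along some $\e^{i_k}$ with $i_k \in \scA_2$, or by a step of length $\sim \sqrt{s}$ in the diffusion coordinates. Chapman-Kolmogorov over the anisotropic cubes $U_k := \cQ(z_k, c\, s^{1/\alpha})$ yields
$$p(t,x,y) \geq \int_{U_1 \times \cdots \times U_{N+M-1}} \prod_{k=0}^{N+M-1} p(s, w_k, w_{k+1}) \, dw_1 \cdots dw_{N+M-1}.$$
For diffusion links each factor is near-diagonal and bounded below by $c\, s^{-d/\alpha-n/2}$ via \autoref{prop:ndl}; for jump links the L\'evy system \eqref{eq:LS} combined with \autoref{l:surv_lb} gives
$$p(s, w_k, w_{k+1}) \gtrsim s^{-d/\alpha-n/2} \cdot \frac{s \cdot s^{1/\alpha}}{|x^{i_k} - y^{i_k}|^{1+\alpha}},$$
since the admissible landing interval in axis $\e^{i_k}$ has $m$-length $\asymp s^{1/\alpha}$ and the survival before and after the single jump has constant-order probability. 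Multiplying all factors, using $|U_k| \asymp s^{d/\alpha+n/2}$, and combining the $M$ diffusion contributions $c^M$ and the polynomial prefactors in $M$ into the Gaussian exponential, one arrives at
$$p(t,x,y) \gtrsim t^{-d/\alpha-n/2} \prod_{i \in \scA_2} \frac{t^{1+1/\alpha}}{|x^i - y^i|^{1+\alpha}} \cdot \exp\Big(-\frac{c\,|\wt x - \wt y|^2}{t}\Big),$$
which matches the target since $(t^{1/\alpha}/|x^i-y^i|)^{1+\alpha} = t^{1+1/\alpha}/|x^i-y^i|^{1+\alpha}$ for $i \in \scA_2$ and $(1 \wedge t^{1/\alpha}/|x^i-y^i|)^{1+\alpha} \asymp 1$ for $i \in \scA_1$.

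The main obstacle is the jump-link estimate. In isotropic settings this is a one-line application of the L\'evy system; here one must force a big jump in one prescribed coordinate while controlling the other $d-1$ jump coordinates and all $n$ diffusion coordinates, and then iterate $N$ times. This requires anisotropic waiting regions tailored to the cubes $\cQ(\cdot, s^{1/\alpha})$, for which the survival estimate \autoref{l:surv_lb} is exactly what is needed. The exponent matching is automatic: the identity $(s^{1/\alpha})^{1+\alpha} = s \cdot s^{1/\alpha}$ encodes the balance between the time $s$ during which the jump is produced and the $m$-measure $s^{1/\alpha}$ of the admissible landing interval along each axis, which is precisely why the exponent $1+\alpha$ appears on each factor of the target bound \eqref{eq:main-estim}.
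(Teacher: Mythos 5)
Your proposal is correct and takes a genuinely different chaining route from the paper's. The paper chains only along the diffusion coordinates, with $k \asymp |\wt x-\wt y|^2/t$ near-diagonal steps whose intermediate centers $z_l$ keep the first $d$ coordinates fixed at $x'$; this already produces the Gaussian factor, and the entire jump displacement $x'\to y'$ is then handled inside a single remaining long-time factor $\bP^y(X_{t/2}\in Q_k)$, estimated via the strong Markov property, the L\'evy system and \autoref{l:surv_lb}, where the final cube $Q_k$ is much smaller than the natural $t^{1/\alpha}$ scale (so the relevant survival factor is exponentially small, $\sim e^{-ck}$, again absorbed into the Gaussian), and the multi-jump details for $d\ge 2$ are outsourced to \cite{Xu13}. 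You instead interleave $N$ one-coordinate jump links and $M$ diffusion links in one Chapman--Kolmogorov chain with equal time pieces $s\asymp t/(N+M)$, so that every link runs at its own natural scale $s^{1/\alpha}$: jump links are single one-jump estimates with constant-order survival, diffusion links are near-diagonal, and $d\ge 2$ is handled transparently by assigning one link per far coordinate. What this buys: no exponentially small survival factors, an explicit treatment of $d\ge 2$, and polynomial prefactors in $N+M$ that fold harmlessly into the Gaussian, as you note. The one estimate you leave as a sketch, $p(s,w_k,w_{k+1})\gtrsim s^{-d/\alpha-n/2}\, s^{1+1/\alpha}/|x^{i_k}-y^{i_k}|^{1+\alpha}$, is the heart of the matter and does follow from the same strong-Markov-plus-L\'evy-system argument as in the paper, run at the natural scale; the key geometric point to verify is that a jump along $\e^{i_k}$ can land in $\cQ(w_{k+1},c\,s^{1/\alpha})$ only if the remaining coordinates of the pre-jump position already agree with those of $w_{k+1}$ to within $\asymp s^{1/\alpha}$, and at your matched scales this admissible slab occupies a constant fraction of the waiting cube, so the occupation-time estimate via \eqref{eq:LS} and \autoref{t:meanexit} closes with constants depending only on $d,n,\alpha,\kappa$.
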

\begin{proof}
Fix $x=(x^1, \dots, x^{d+n})=(x', \wt x)$ and $y=(y^1, \dots,  y^{d+n})=(y', \wt y)$. By \autoref{prop:ndl}, we may and do assume that $|x^i-y^i|>\eps t^{1/\alpha}$ for $i\in \{1,\dots, d\}$ and $r_0:=|\wt x-\wt y|>\eps^{\alpha/2}t^{1/2}$, where $\eps\in(0,1)$ is the constant in \autoref{prop:ndl}. 
Let $k\in\N$ be the smallest integer satisfying $r_0/k<3^{-1}\eps^{\alpha/2}(2^{-1}t/k)^{1/2}$. Then, $k\asymp r_0^2/t$. Indeed, $1\le 2\cdot3^2\eps^{-\alpha}{r_0^2}/{t}\le k <4\cdot3^2\eps^{-\alpha}{r_0^2}/{t}$ and thus
\begin{align}\label{k_comp}
36^{-1}\eps^{\alpha}\frac{t}{k}\le \Big(\frac{r_0}{k}\Big)^2 <18^{-1}\eps^{\alpha}\frac{t}{k}.
\end{align}
For $l=0,1,\dots, k$, let $z_l:=(x', \wt x+\frac{l}{k}(\wt y-\wt x))$ and $Q_l:=\cQ(z_l, (r_0/k)^{2/\alpha})$. For $l=0,1, \dots, k-1$ and $ i\in\{1,\dots, d\}$, $j\in\{d+1,\dots, d+n\}$, we have
$$|z_l^i-z_{l+1}^i|=0,\quad |z_l^j-z_{l+1}^j|\le |z_l-z_{l+1}|=\frac{r_0}{k}<3^{-1}\eps^{\alpha/2}\left(\frac{t}{2k}\right)^{1/2}.$$ 
Thus, for any ${\xi}_l\in Q_l$, ${\xi}_{l+1}\in Q_{l+1}$ and $ i\in\{1,\dots, d\}$, $j\in\{d+1,\dots, d+n\}$,
\begin{align*}
|{\xi}_l^i-{\xi}_{l+1}^i|&\le |{\xi}_l^i-z_l^i|+|z_l^i-z_{l+1}^i|+|z_{l+1}^i-{\xi}_{l+1}^i|< 2\left(\frac{r_0}{k}\right)^{2/\alpha}< \eps \left(\frac{t}{2k}\right)^{1/\alpha},\nn\\
|{\xi}_l^j-{\xi}_{l+1}^j|&\le |{\xi}_l^j-z_l^j|+|z_l^j-z_{l+1}^j|+|z_{l+1}^j-{\xi}_{l+1}^j|< 3\frac{r_0}{k}< \eps^{\alpha/2} \left(\frac{t}{2k}\right)^{1/2}.
\end{align*}
Thus, by  \autoref{prop:ndl}, we have $p(\frac{t}{2k},{\xi}_l,{\xi}_{l+1})\ge c_1(2^{-1}t/k)^{-d/\alpha-n/2}$. Then, using the semigroup property and the relation \eqref{k_comp}, we obtain
\begin{align}\label{lower1}
&p(t,x,y)\nn\\
&=\int_{\R^{d+n}}\int_{\R^{d+n}}\cdots\int_{\R^{d+n}}p(\frac{t}{2k}, x, {\xi}_1)p(\frac{t}{2k}, {\xi}_1, {\xi}_2)\cdots p(\frac{t}{2k}, {\xi}_{k-1}, \xi_k) p(\frac{t}{2}, {\xi}_{k}, y)\rd {\xi}_1\cdots \rd {\xi}_{k}\nn\\
&\ge\int_{Q_1}\int_{Q_2}\cdots \int_{Q_k} p(\frac{t}{2k}, x, {\xi}_1)p(\frac{t}{2k}, {\xi}_1, {\xi}_2)\cdots p(\frac{t}{2}, {\xi}_{k}, y)\rd {\xi}_1\cdots \rd {\xi}_{k}\nn\\
&\ge c_1(2^{-1}t/k)^{-d/\alpha-n/2}\prod^{k-1}_{i=1}\big(c_1(2^{-1}t/k)^{-d/\alpha-n/2}|Q_i|\big)\int_{Q_k}p(\frac{t}{2}, {\xi}_k, y)\rd {\xi}_k\nn\\
&= c_1(2^{-1}t/k)^{-d/\alpha-n/2}\prod^{k-1}_{i=1}\big(c_1(2^{-1}t/k)^{-d/\alpha-n/2}(r_0^2/k^2)^{d/\alpha+n/2}\big)\bP^y(X_{t/2}\in Q_k)\nn\\
&\ge  c_2c_3^{k} t^{-d/\alpha-n/2}\bP^y(X_{t/2}\in Q_k).
\end{align}
Here, $c_1, c_2, c_3$ are positive constants can be chosen independently of $k$. To find the lower bound of $\bP^y(X_{t/2}\in Q_k)$, we follow the proof of \cite[Theorem 4.21]{Xu13}. Since the proofs are the same for $d\ge1$, we only consider the case that $d=1$. 

Let $Q=\cQ(y,3^{-2/\alpha}\eps t^{1/\alpha})$ and $\wt Q_k:=\cQ(z_k, 4^{-1}(r_0/k)^{2/\alpha})\subset Q_k$. Then, for $v\in Q$ and $u\in \wt Q_k$,
\begin{align}\label{lb_ineq}
|v^1-u^1|\le |v^1-x^1|+|x^1-y^1|+|y^1-u^1|\le |x^1-y^1|+\frac{2\eps}{3}t^{1/\alpha}\le 2|x^1-y^1|.
\end{align}
Moreover, by \autoref{l:surv_lb}, we see that 
\begin{align}\label{surv_lb1}
\bE^y[(t/2)\wedge \tau_Q]\ge \frac{t}{2}\,\bP^y(\tau_Q>t/2)\ge c_4t,
\end{align}
and for any $u\in \wt Q_k$,
\begin{align}\label{surv_lb2}
\bP^u(\tau_{Q_k}>t)\ge \bP^u(\tau_{\cQ(u, c_5(r_0/k)^{2/\alpha} )}>t)>C_1\exp\big(-C_2c_5^{-\alpha}36\eps^{-\alpha}k\big),
\end{align}
where $c_5\in(0, 1-4^{-\alpha/2})$, and $C_1, C_2>0$ are the constants in \autoref{l:surv_lb}.
Let $\wt\sigma:=\inf\{t>0: X_t\in \wt Q_k\}$ and $c_6:=C_2c_5^{-\alpha}36\eps^{-\alpha}$. 
By the strong Markov property, \eqref{surv_lb2}, the L\'evy system, \eqref{lb_ineq} and \eqref{surv_lb1},
\begin{align}\label{lower2}
\bP^y(X_{t/2}\in Q_k)&\ge\bP^y\big(\bP^{X_{\wt\sigma}}(\tau_{Q_k}\circ \theta_{\wt \sigma}>t/2-\wt \sigma), \wt\sigma<t/2\big)\nn\\
&\ge C_1\exp\big(-c_6k\big)\bP^y\big(X_{(t/2)\wedge \tau_Q}\in \wt Q_k\big)\nn\\
&=C_1\exp\big(-c_6k\big)\bE^y\left[\int^{(t/2)\wedge \tau_Q}_0\int_{\wt Q_k}J(X_s, u)m(\rd u)\rd s\right]\nn\\
&\ge \kappa^{-1}C_1\exp\big(-c_6k\big)\bE^y\left[\int^{(t/2)\wedge \tau_Q}_0\int_{\wt Q_k}\frac{1}{|X_s-u|^{1+\alpha}}m(\rd u)\rd s\right]\nn\\
&\ge c_7\exp\big(-c_6 k\big)\bE^y[(t/2)\wedge \tau_Q]\frac{(t/k)^{\alpha^{-1}}}{|x^1-y^1|^{1+\alpha}}\nn\\
&\ge c_7c_8\exp\big(-2c_6 k\big)\frac{t^{1+\alpha^{-1}}}{|x^1-y^1|^{1+\alpha}},
\end{align}
where the last inequality follows from that there exists $c_8>0$ such that $e^{c_6k}\ge c_8k^{1/\alpha}$ for all $k\ge1$.
Thus, by \eqref{lower1} and \eqref{lower2},
\begin{align*}
p(t,x,y)
&\ge c_2c_7c_8 t^{-d/\alpha-n/2}\exp\big(-\log(1/c_3)k\big)\exp\big(-2c_6 k\big)\frac{t^{1+\alpha^{-1}}}{|x^1-y^1|^{1+\alpha}}\\
&\ge c_2c_7c_8 t^{-d/\alpha-n/2}\exp\left(-\frac{c_9 |\wt x-\wt y|^2}{t}\right)\frac{t^{1+\alpha^{-1}}}{|x^1-y^1|^{1+\alpha}}.
\end{align*}
\end{proof}

\vspace{5mm}

\section{Off-diagonal upper bound}\label{sec:suhk}

In order to prove \autoref{theo:main} it remains to show the off-diagonal upper bound in \eqref{eq:main-estim}, which is the main goal of this section. We present the proof in a fully self-contained manner in the case $d=n=1$ in \autoref{sec:upper-dn-one}. The strategy is analogous in the general case but the presentation becomes more complex. We treat this case in \autoref{sec:upper-dn-general}. This choice of presentation leads to some redundancy, which we accept for the benefit of higher readability of the main ideas and formal arguments. 

\medskip

Before we explain the method of proof, let us introduce a technical tool that we are going to apply. Recall that $\{P_t, t\ge0\}$ is the transition semigroup of $X$ defined by
$$P_tf(x)=\bE^x[f(X_t)]=\int_{\R^{d+n}}p(t,x,y)f(y)\rd y,$$
for any non-negative Borel function $f$ on $\R^{d+n}$ and for any $t>0$, $x\in\R^{d+n}$. Since $(\cE, \cF)$ is symmetric, the following lemma holds:
\begin{lemma}[{\cite[Lemma 2.1]{BGK09}}]\label{l:2.1}
	Let $U$ and $V$ be two disjoint non-empty open subsets of $\R^{d+n}$ and $f,g$ be non-negative Borel functions on $\R^{d+n}$. Let $\tau=\tau_U$ and $\tau'=\tau_V$ be the first exit times from $U$ and $V$, respectively. Then, for all $a,b,t>0$ such that $a+b=t$, we have
	\begin{align}\label{l:2.1ineq}
	\begin{split}
	\int_{\R^{d+n}} P_tf(x)g(x) \rd x&\le \int_{\R^{d+n}} \bE^{x}[\1_{\{\tau\le a\}}P_{t-\tau}f(X_\tau)] g(x) \rd x\\
	&\qquad+\int_{\R^{d+n}} \bE^{x}[\1_{\{\tau'\le b\}}P_{t-\tau'}g(X_{\tau'})] f(x) \rd x.
	\end{split}
	\end{align}
\end{lemma}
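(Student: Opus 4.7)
The plan is to split $P_t f$ according to the exit time $\tau = \tau_U$, treat the short-exit piece by the strong Markov property at $\tau$, and reduce the long-survival piece to the $\tau' = \tau_V$ term by combining the symmetry of the killed semigroup on $U$ with the disjointness of $U$ and $V$.

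Decompose $P_t f(x) = \bE^x[f(X_t);\tau\le a] + \bE^x[f(X_t);\tau>a]$. On $\{\tau\le a\}$ the strong Markov property at $\tau$ gives $\bE^x[f(X_t);\tau\le a] = \bE^x[\1_{\{\tau\le a\}} P_{t-\tau}f(X_\tau)]$, which after multiplying by $g(x)$ and integrating is precisely the first term of \eqref{l:2.1ineq}. It therefore suffices to show, with $b := t-a$,
\begin{equation*}
I := \int_{\R^{d+n}} \bE^x[f(X_t);\tau>a]\,g(x)\,\rd x \le \int_{\R^{d+n}} f(x)\,\bE^x[\1_{\{\tau'\le b\}} P_{t-\tau'}g(X_{\tau'})]\,\rd x.
\end{equation*}

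Write $P^U_s$ for the semigroup of $X$ killed on leaving $U$; its kernel $p^U_s(x,y)$ is symmetric in $(x,y)$ because $(\cE,\cF)$ is symmetric (the part Dirichlet form on $U$ is symmetric). The Markov property at time $a$ gives $\bE^x[f(X_t);\tau>a] = P^U_a(P_b f)(x)$, whence
\begin{equation*}
I = \int g(x)\,P^U_a(P_b f)(x)\,\rd x = \int f(y)\,P_b(P^U_a g)(y)\,\rd y,
\end{equation*}
where I have used the symmetry identities $\int h\cdot P_b k\,\rd x = \int P_b h\cdot k\,\rd x$ and $\int h\cdot P^U_a k\,\rd x = \int P^U_a h\cdot k\,\rd x$. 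Unfolding $P_b(P^U_a g)(y) = \bE^y\bigl[\bE^{X_b}[g(X_a);\tau_U>a]\bigr]$ and using the Markov property at $b$ one more time rewrites this as
\begin{equation*}
I = \int f(y)\,\bE^y\bigl[g(X_t);\,X_s\in U\text{ for all }s\in[b,t]\bigr]\,\rd y.
\end{equation*}

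Since $U\cap V=\emptyset$, the path event above forces $X_b\notin V$, so either $y\in V$ (whence $\tau'\le b$ because $X_b\notin V$) or $y\notin V$ (whence $\tau'=0\le b$). In both cases $\{X_s\in U\ \forall\,s\in[b,t]\}\subseteq\{\tau'\le b\}$, and the strong Markov property at $\tau'$ then yields the desired bound. The only delicate point in this plan is the symmetry of the killed semigroup $P^U_a$, which I would justify by restricting the Dirichlet form to $U$; everything else is routine Markov-property bookkeeping and monotonicity.
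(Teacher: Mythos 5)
Your proof is correct. The paper does not actually reproduce a proof of this lemma — it cites \cite[Lemma 2.1]{BGK09} — but your argument follows the same route as the original: split $P_t f$ at the exit time $\tau$, handle $\{\tau\le a\}$ by the strong Markov property, rewrite the survival piece via $\bE^x[f(X_t);\tau>a]=P^U_a(P_bf)(x)$, move the operators across the inner product using symmetry of $P_b$ and of the killed semigroup $P^U_a$ (both inherited from symmetry of $(\cE,\cF)$), and then use $U\cap V=\emptyset$ together with $a+b=t$ to deduce the inclusion $\{X_s\in U\ \forall s\in[b,t]\}\subseteq\{X_b\notin V\}\subseteq\{\tau'\le b\}$, which gives the second term after a final application of the strong Markov property at $\tau'$. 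All steps are justified; since $f,g\ge 0$ the manipulations are valid in $[0,\infty]$, and $\tau'\le b<t$ ensures the strong Markov step at $\tau'$ applies.
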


\medskip

The desired upper bound in \eqref{eq:main-estim} will be the final step in an iterative scheme. Let us introduce those conditions that are needed already in the case $d=n=1$. Let $q\ge0$ be given. The we define the following conditions:

\medskip

\noindent{$\HHq{q}{0; 0}$} There exists $C_0 \geq 1$ such that for all $t>0$, $x,y\in\R^{d+n}$,
\begin{align}\label{H0q}
p(t,x,y)&\le C_0t^{-d/\alpha-n/2} \prod_{i=1}^{d}\bigg(1\wedge\frac{t}{|x^i-y^i|^{\alpha}}\bigg)^q.
\end{align}

\noindent{$\HHq{q}{0; n}$} There exist $C_0, c \geq 1$ such that for all $t>0$, $x,y\in\R^{d+n}$,
\begin{align}\label{wtH0q}
p(t,x,y)&\le C_0t^{-d/\alpha-n/2}\exp\bigg(-\frac{|\wt x-\wt y|^2}{c\,t}\bigg) \prod_{i=1}^{d}\bigg(1\wedge\frac{t}{|x^i-y^i|^{\alpha}}\bigg)^q .
\end{align} 

In the case $d=n=1$, the main aim is to prove $\HHq{1+\frac{1}{\alpha}}{0; n}$ which is equivalent to the upper bound in \eqref{eq:main-estim}.

\subsection{The case $d=n=1$}\label{sec:upper-dn-one}{\ }

\medskip

We have already mentioned that the final upper bound in \eqref{eq:main-estim} is the last conclusion in a certain iterative scheme. Let us explain this scheme. 

\medskip

Step 1:
\begin{align*}
&\HHq{0}{0; 0} \hookrightarrow \HHq{\lambda_0}{0; 0} \hookrightarrow \HHq{2 \lambda_0}{0; 0} \ldots \hookrightarrow \HHq{N_0 \lambda_0}{0; 0} \\
\hookrightarrow  &\HHq{1+\alpha^{-1}}{0; 0}=:\HHq{0}{1; 0} \,, 
\end{align*}
where the last definition is natural and facilitates future notation. Every implication within the first of the above chain is a direct application of \autoref{lem:Gl-one}, Part (i). The last implication follows from \autoref{lem:Fl-one}, Part (i).

\medskip

Step 2: Independent from Step 1 we establish $\HHq{0}{0; n}$ with the help of \autoref{theo:uhk}.

\medskip

Step 3: With the help of condition $\HHq{0}{1; 0}$, we establish:
\begin{align*}
&\HHq{0}{0; n} \hookrightarrow \HHq{\lambda_1}{0; n} \hookrightarrow \HHq{2 \lambda_1}{0; n} \ldots \hookrightarrow \HHq{N_1 \lambda_1}{0; n} \\
\hookrightarrow  &\HHq{1+\alpha^{-1}}{0; n}\,,
\end{align*}
where all but the last implication are applications of \autoref{lem:Gl-one}, Part (ii). The last implication follows from \autoref{lem:Fl-one}, Part (ii).

\begin{remark*}
	The presentation above including all implications has the advantage that its generalization to the higher-dimensional case can easily be understood, see \autoref{sec:upper-dn-general}.
\end{remark*}

\medskip

\begin{lemma}\label{lem:Gl-one}
	Assume condition $\HHq{q}{0; \eta}$ holds true for $q<\alpha^{-1}$. Further, assume  
	\begin{itemize}
		\item[(i)] either $\eta=0$, 
		\item[(ii)] or $\eta=1$ and $\HHq{0}{1;0}$ holds true. 
	\end{itemize}
	Then $\HHq{q+\lambda_0}{0; \eta}$ holds true, where $\lambda_0$ depends only on $\alpha$ and satisfies $q+\lambda_0 < 1 + \alpha^{-1}$.
\end{lemma}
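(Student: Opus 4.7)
We adapt to the present mixed jump/diffusion setting the iterative bootstrap scheme of \cite{KKK19}. Fix $x_0, y_0 \in \R^{d+n}$ and $t > 0$, and set $R := |x_0^1 - y_0^1|$. If $R^{\alpha} \le t$ the desired inequality reduces to the on-diagonal bound of \autoref{prop:uhkd}, so assume $R^{\alpha} > t$. The basic tool is the semigroup splitting of \autoref{l:2.1}, applied with $U := \cQ(x_0, R/4)$ and $V := \cQ(y_0, R/4)$ (which are disjoint), with $a = b = t/2$, and with $f, g$ standard mollifications of the Dirac masses at $y_0$ and $x_0$ respectively. Letting the mollification parameter tend to zero and using the continuity of $p$ (\autoref{r:uhkd}) produces
\begin{align*}
p(t, x_0, y_0) \le \bE^{x_0}\bigl[\1_{\{\tau_U \le t/2\}}\, p(t-\tau_U, X_{\tau_U}, y_0)\bigr] + \bE^{y_0}\bigl[\1_{\{\tau_V \le t/2\}}\, p(t-\tau_V, X_{\tau_V}, x_0)\bigr].
\end{align*}
By the $x_0 \leftrightarrow y_0$ symmetry it suffices to estimate the first expectation, which we denote by $\mathcal{I}$.

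The process $X$ leaves $U$ either by a jump in the $x^1$-direction or by a continuous crossing of the $x^2$-face $\{|\xi^2 - x_0^2| = (R/4)^{\alpha/2}\}$, so we split $\mathcal{I} = \mathcal{I}^J + \mathcal{I}^D$. The jump contribution is rewritten via the L\'evy system \eqref{eq:LS} as
\begin{align*}
\mathcal{I}^J \le \bE^{x_0}\!\left[\int_0^{\tau_U \wedge t/2} \!\int_{\R} \1_{\{X_s + \e^1 h \notin U\}}\, p(t-s, X_s + \e^1 h, y_0)\, J(X_s, X_s + \e^1 h)\, dh\, ds\right].
\end{align*}
We then insert the inductive hypothesis $\HHq{q}{0;\eta}$ to bound $p(t-s, X_s + \e^1 h, y_0)$, use $J \le \kappa |h|^{-1-\alpha}$, and decompose the $h$-integral according to whether $|X_s^1 + h - y_0^1|$ is $\lesssim t^{1/\alpha}$, of order $R$, or $\gg R$. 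Since $|X_s^1 - x_0^1| < R/4$ on $\{s < \tau_U\}$ and $R^{\alpha} > t$, the singular kernel contributes essentially $R^{-1-\alpha}$; together with $\bE^{x_0}[\tau_U \wedge t/2] \le C t$ from \autoref{t:meanexit}, the condition $q < \alpha^{-1}$ produces a strict gain $\lambda_0 = \lambda_0(\alpha) > 0$ in the exponent, yielding an estimate
\begin{align*}
\mathcal{I}^J \le C\, t^{-d/\alpha - n/2}\Bigl(\tfrac{t}{R^{\alpha}}\Bigr)^{q + \lambda_0} \cdot G_\eta(x_0, y_0, t),
\end{align*}
where $G_0 \equiv 1$ and $G_1(x_0, y_0, t) = \exp(-|\wt x_0 - \wt y_0|^2/(c t))$ is inherited from the Gaussian factor in the inductive hypothesis in case (ii).

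The diffusion-exit contribution $\mathcal{I}^D$ is handled differently in the two cases. In case (i), the rough upper bound of \autoref{theo:uhk} for $p(t - \tau_U, X_{\tau_U}, y_0)$, combined with the survival estimate $\bP^{x_0}(\tau_U \le t/2) \le C\, t/R^{\alpha}$ from \autoref{prop:survival}, dominates $\mathcal{I}^D$ by the same bound as $\mathcal{I}^J$. In case (ii), we instead bound $p(t - \tau_U, X_{\tau_U}, y_0)$ by the sharp hypothesis $\HHq{0}{1;0}$, and use the sub-Gaussian tail of the Brownian exit of the $X^2$-component from a strip of half-width $(R/4)^{\alpha/2}$ in time $t/2$ in order to reinstate the factor $\exp(-|\wt x_0 - \wt y_0|^2/(c t))$ in the final bound. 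Summing $\mathcal{I}^J$ and $\mathcal{I}^D$ together with the symmetric estimate for the $V$-term yields the conclusion.

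The main technical difficulty is the explicit $h$-integration in $\mathcal{I}^J$: one must verify (a) that the gain $\lambda_0$ is strictly positive (which is precisely where the strict inequality $q < \alpha^{-1}$ enters), and (b) that $q + \lambda_0 < 1 + \alpha^{-1}$, so that the bootstrap terminates after a bounded number of steps without producing a divergent integral. A secondary point, relevant in case (ii), is tracking the constant $c$ in the Gaussian factor throughout the iteration: the concatenation of the semigroup splitting with the inductive hypothesis must be arranged so that a single fixed constant $c = c(\alpha,\kappa)$ governs the Gaussian decay after $N_1$ iterations, rather than a constant that deteriorates at each step.
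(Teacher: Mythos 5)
Your plan shares the paper's high-level architecture — the \autoref{l:2.1} splitting, insertion of the inductive hypothesis $\HHq{q}{0;\eta}$, the L\'evy system for jump exits, and the interpolation trick for preserving the Gaussian factor in case (ii). But the decomposition the paper actually uses is structurally different, and the difference is load-bearing. The paper does not split $\mathcal I$ by exit mechanism (jump vs.\ continuous); it splits by the \emph{landing annulus} $A_k=y_0+\cD_k$ \emph{centered at} $y_0$, and then applies the L\'evy system only to the near annuli $k<\theta_1$. For those, the geometry forces every exit jump to have size $|h|\ge R_1/8$ (this is what is verified right after \eqref{upperexit}), so $J\lesssim R_1^{-1-\alpha}$ is legitimate. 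For the far annuli $k\ge\theta_1$ the paper abandons the L\'evy system altogether and uses the survival estimate $\bP(\tau\le t/2)\lesssim t/R_1^{\alpha}$ from \autoref{prop:survival} (see \eqref{part2}, \eqref{part2II2}, \eqref{part2I2}).

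This is exactly where your plan has a genuine gap. Your "of order $R$" regime in the $h$-integral contains configurations in which $X_s$ sits near the $x^1$-face of $U=\cQ(x_0,R/4)$ and jumps just across it on the $x_0$-side: there $|X_s^1+h-y_0^1|\approx 3R/4$ while $|h|$ is arbitrarily small, so $J(X_s,X_s+\e^1 h)\approx|h|^{-1-\alpha}$ does \emph{not} reduce to $R^{-1-\alpha}$, and the quantities you invoke ($J\lesssim R^{-1-\alpha}$ and $\bE[\tau_U\wedge t/2]\lesssim t$) do not control the resulting integral. Repairing this requires either a boundary-occupation estimate near $\partial U$ or, as in the paper, replacing the L\'evy system by the survival estimate in precisely that regime. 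Two smaller points: $\bE[\tau_U\wedge t/2]\le t/2$ is trivial and is not what \autoref{t:meanexit} says (that theorem gives $\bE[\tau_U]\lesssim R^\alpha$); and the identification of a fixed $\lambda_0>0$ with $q+\lambda_0<1+\alpha^{-1}$ — which in the paper comes out of the dyadic sums in \eqref{part1}, \eqref{part21} together with the choice of $\eps\in(0,1)$ in \eqref{upper_gaussian} — is the quantitative heart of the lemma and cannot be left as a "verify later" step.
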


\begin{lemma}\label{lem:Fl-one}
	Assume condition $\HHq{q}{0; \eta}$ holds true  $q>\alpha^{-1}$. Further, assume 
	\begin{itemize}
		\item[(i)] either $\eta=0$, 
		\item[(ii)] or $\eta=n$ and $\HHq{0}{1;0}$ holds true. 
	\end{itemize}
	Then $\HHq{1+\alpha^{-1}}{0; \eta}$ holds true.
\end{lemma}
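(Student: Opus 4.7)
The plan is to obtain the sharp exponent $1+\alpha^{-1}$ from $\HHq{q}{0;\eta}$ in a single application of \autoref{l:2.1}; this is the terminating step of the iterative scheme inherited from \cite{KKK19}. Set $r := |x_0^1 - y_0^1|$ and assume $r > t^{1/\alpha}$, otherwise the on-diagonal bound from \autoref{prop:uhkd}(ii) (together with the Gaussian factor of \autoref{theo:uhk} in case (ii)) already gives $\HHq{1+\alpha^{-1}}{0;\eta}$. With $U = \cQ(x_0, r/3)$, $V = \cQ(y_0, r/3)$ and $a = b = t/2$, \autoref{l:2.1} (after standard mollification of $x_0, y_0$ and a limiting argument) yields
\begin{align*}
p(t, x_0, y_0) \;\le\; \bE^{x_0}\big[p(t-\tau_U, X_{\tau_U}, y_0);\, \tau_U \le t/2\big] + \text{(symmetric term)},
\end{align*}
and by symmetry I bound only the first summand.

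Since every jump of $X$ (in the $d=n=1$ setting) lies along $\R_1$, exit from $U$ is either (a) a pure jump in the first coordinate or (b) a continuous diffusive crossing of the face $|X_{\tau_U}^2 - x_0^2| = (r/3)^{\alpha/2}$. Case (b) is controlled by \autoref{prop:survival} and contributes, combined with the on-diagonal version of $\HHq{q}{0;\eta}$, a term of the desired order. For the main case (a), the L\'evy system \eqref{eq:LS} rewrites the expectation as
\begin{align*}
\bE^{x_0}\!\left[\int_0^{(t/2)\wedge\tau_U}\!\int_{U^c}p(t-s,y,y_0)\,J(X_s,y)\,m(\rd y)\,\rd s\right],
\end{align*}
and I split the inner integral according to whether (A) $|y^1 - y_0^1| \ge r/3$ or (B) $|y^1 - y_0^1| < r/3$. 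In region (A), $\HHq{q}{0;\eta}$ gives $p(t-s,y,y_0) \le C(t-s)^{-1/\alpha-1/2}(t/r^\alpha)^q$, and the expected integrated jump rate into (A) is $\le \bP^{x_0}(\tau_U \le t/2) \le c t/r^\alpha$ by Dynkin's formula together with \autoref{prop:survival}, so the contribution is at most $Ct^{-1/\alpha-1/2}(t/r^\alpha)^{1+q} \le Ct^{-1/\alpha-1/2}(t/r^\alpha)^{1+1/\alpha}$. In region (B) the geometric constraints force $|X_s^1-y^1|\ge r/3$, hence $J(X_s,y)\le c r^{-1-\alpha}$, and the decisive one-dimensional integral
\begin{align*}
\int_{|y^1-y_0^1|<r/3}\bigg(1\wedge\frac{t-s}{|y^1-y_0^1|^\alpha}\bigg)^q\rd y^1 \;\le\; c(t-s)^{1/\alpha}
\end{align*}
converges precisely because $q\alpha > 1$; this is exactly where the strict hypothesis $q > \alpha^{-1}$ enters. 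Multiplying by $C(t-s)^{-1/\alpha-1/2}$ from the prefactor of $p$ and integrating in $s \in [0, t/2]$ also produces a bound of order $t^{-1/\alpha-1/2}(t/r^\alpha)^{1+1/\alpha}$.

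The main obstacle is case (ii) $\eta=n$: one must preserve the Gaussian weight $\exp(-|x_0^2-y_0^2|^2/(ct))$ through the decomposition. In region (A) the weight is delivered by $\HHq{q}{0;n}$ at the point $(X_s^2,y_0^2)$ and transferred to $(x_0^2, y_0^2)$ at the cost of enlarging $c$, using $|X_s^2 - x_0^2| < (r/3)^{\alpha/2}$ for $s < \tau_U$. In region (B), a direct use of $\HHq{q}{0;n}$ is wasteful in the $y^1$-integral, and this is where the auxiliary hypothesis $\HHq{0}{1;0} = \HHq{1+\alpha^{-1}}{0;0}$ enters: it supplies the sharp jump exponent without Gaussian factor, while the Gaussian weight in $x^2$ is then recovered from a separate tail estimate on the diffusive component, whose law is, by (H), comparable to a time-changed one-dimensional Brownian motion. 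Combining both regions with case (b) concludes $\HHq{1+\alpha^{-1}}{0;\eta}$.
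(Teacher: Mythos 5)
Your proposal for case~(i), $\eta=0$, is essentially sound and in fact offers a genuine simplification over the paper. The paper proves \autoref{lem:Fl-one} together with \autoref{lem:Gl-one} via \autoref{prop:main_twodim}, which decomposes the exit position $X_\tau$ into the \emph{dyadic} annuli $A_k$ of \eqref{def_Dk} and sums carefully over $k$ — this is forced because the same proposition must also serve $q<\alpha^{-1}$. Since you only need $q>\alpha^{-1}$, your coarse split of the landing zone into (A) $|y^1-y_0^1|\ge r/3$ and (B) $|y^1-y_0^1|<r/3$ works: the convergence $\int_{\R}\bigl(1\wedge\tfrac{s}{|u|^{\alpha}}\bigr)^q\,\rd u \le c\,s^{1/\alpha}$, which you correctly identify as the place where $q\alpha>1$ enters, replaces the tail of the dyadic sum. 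The bookkeeping in (A) via $\bP^{x_0}(\tau_U\le t/2)\le c\,t/r^\alpha$ and in (B) via $J(X_s,y)\le cr^{-1-\alpha}$ plus the convergent $y^1$-integral does produce the correct order $t^{-1/\alpha-1/2}(t/r^{\alpha})^{1+\alpha^{-1}}$.

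Case~(ii), $\eta=n$, however, has a genuine gap, and it is exactly the point the paper works hardest on. You claim that in region (A) the Gaussian factor of $\HHq{q}{0;n}$ at $(X_s^2,y_0^2)$ ``transfers'' to $(x_0^2,y_0^2)$ ``at the cost of enlarging $c$, using $|X_s^2-x_0^2|<(r/3)^{\alpha/2}$''. This transfer is valid only when $|x_0^2-y_0^2|\gtrsim (r/3)^{\alpha/2}$; but the regime $t^{1/2}\ll|x_0^2-y_0^2|\ll r^{\alpha/2}$ is perfectly possible (recall $r>t^{1/\alpha}$ so $(r/3)^{\alpha/2}\gg t^{1/2}$), and there the process can first diffuse so that $X_s^2$ is near $y_0^2$ before jumping, wiping out the Gaussian factor coming from $p(t-s,\,\cdot\,,y)$. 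Your (A)/(B) split is by the \emph{first} coordinate of the landing site, which is orthogonal to this difficulty. The paper resolves it by introducing the slab $S=\{z:\tfrac12|x_0^2-y_0^2|<|x_0^2-z^2|<\tfrac32|x_0^2-y_0^2|\}$ in the \emph{second} coordinate and splitting $\Phi(k)=\Phi_1(k)+\Phi_2(k)$ accordingly (see \eqref{decompose}): on $A_k\setminus S_k$ the target Gaussian transfers as in \eqref{expII}, whereas on $S_k$ the Gaussian factor is instead extracted from the small probability that the diffusive component reaches $S$ before $\tau$, quantified via $\int_0^t\bP^x(X_s\in\cQ_0\cap S)\,\rd s$ in \eqref{exp_upper} and the mixed estimate \eqref{upper_gaussian}. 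That mixed estimate itself rests on the interpolation \eqref{suhk1}, taking a geometric mean of $\HHq{0}{1;0}$ (sharp jump exponent, no Gaussian) with the Davies-method bound of \autoref{theo:uhk} (weak jump exponent, with Gaussian) to obtain simultaneously a Gaussian factor and a jump exponent $\beta'>\alpha^{-1}$; this is where the auxiliary hypothesis $\HHq{0}{1;0}$ actually enters, not in the way you describe. Your remark that the diffusive component ``is, by (H), comparable to a time-changed one-dimensional Brownian motion'' is not available: $X^2$ is not a Markov process on its own, and the paper never asserts such a comparison — the sub-Gaussian control of $X^2$ is derived from the heat kernel bound \eqref{suhk1} via \eqref{key1} and \cite[Lemma 3.8]{BBCK09}, not from a time-change. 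Until the slab decomposition and the interpolation argument are put in, case~(ii) is not proved.
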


To prove above two lemmas, we need the following technical result, \autoref{prop:main_twodim}. The proof of above two lemmas are given after the proof of \autoref{prop:main_twodim}.

\medskip

\begin{proposition}\label{prop:main_twodim}
Let $\eta\in\{0,1\}$, $\alpha\in(0,2)$ and $t>0$. Assume that $\HHq{q}{0; \eta}$ holds true for some $q\in[0,1+\alpha^{-1}]$. Assume further that either $\eta=0$ or the conjunction $\eta=1$ and $\HHq{0}{1;0}$ hold. Let $t>0$, $x_0=(x_0^1, x_0^2), y_0=(y_0^1, y_0^2)\in\R^2$ with $|x_0^1-y_0^1|\ge \frac52t^{1/\alpha}$. Set $\rho=t^{1/\alpha}$ and take $\theta_1\in\N$ satisfying $\frac54R_1\le |x_0^1-y_0^1|<\frac{10}{4} R_1$, where $R_1=2^{\theta_1}\rho$. Let $\tau=\tau_{\cQ(x_0, R_{1}/8)}$ and $f$ be a non-negative Borel function on $\R^2$ supported in $\cQ(y_0, \rho/8)$. Then there exist $C,c>0$ independent of $x_0, y_0$ and $t$ such that for every $x\in \cQ(x_0, \rho/8)$,
	\begin{align}\label{e:key}
	\begin{split}
	&\bE^{x}\big[\1_{\{\tau\le t/2\}}P_{t-\tau}f(X_\tau)\big]\\
	&\le C t^{-1/\alpha-1/2}\|f\|_1\exp\bigg(-\frac{|x_0^2-y_0^2|^2}{c\,t}\bigg)^{\eta}\cdot\begin{cases}
	\Big(1\wedge\frac{t}{|x_0^1-y_0^1|^{\alpha}}\Big)^{\frac12+q}&\text{if}\;\;q<\alpha^{-1};\\
	\Big(1\wedge\frac{t}{|x_0^1-y_0^1|^{\alpha}}\Big)^{1+\alpha^{-1}}&\text{if}\;\;q>\alpha^{-1}.
	\end{cases}
	\end{split}
	\end{align}
\end{proposition}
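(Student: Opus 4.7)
I would decompose the expectation via the L\'evy system \eqref{eq:LS}, exploiting the structural fact that $X$ jumps only in the first coordinate while its second coordinate is a uniformly elliptic diffusion. Write
\[
\bE^{x}\!\left[\1_{\{\tau\le t/2\}} P_{t-\tau}f(X_\tau)\right]=I_J+I_C,
\]
where $I_J$ is the contribution from exits via a jump in the first coordinate and $I_C$ the contribution from continuous exits through the face $\{y:|y^2-x_0^2|=(R_1/8)^{\alpha/2}\}$ (the only way $X$ can exit $\cQ(x_0,R_1/8)$ without a jump).

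\textbf{L\'evy-system expansion of $I_J$.} Applying \eqref{eq:LS} to the function $\1_{\{s\le t/2\}}\1_{\{u\in\cQ(x_0,R_1/8)\}}\1_{\{v\notin\cQ(x_0,R_1/8)\}} P_{t-s}f(v)$,
\[
I_J=\bE^{x}\!\int_0^{\tau\wedge t/2}\!\!\int_\R \1_{\{X_s^1+h\notin(x_0^1-R_1/8,\,x_0^1+R_1/8)\}} J(X_s,X_s+h\e^1)\, P_{t-s}f(X_s+h\e^1)\,\rd h\,\rd s.
\]
Since $|x_0^1-y_0^1|\asymp R_1$ and $X_s^1\in(x_0^1-R_1/8,x_0^1+R_1/8)$ for $s<\tau$, I split the $h$-integration by the location of $\zeta:=X_s^1+h$ relative to $y_0^1$. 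In the \emph{landing zone} $|\zeta-y_0^1|\le \rho/4$, one has $|h|\asymp R_1$, the jump kernel satisfies $J\asymp R_1^{-1-\alpha}$, the $h$-slab has length $\asymp\rho$, and the near-diagonal regime of $\HHq{q}{0;\eta}$ gives $P_{t-s}f(X_s+h\e^1)\le C(t-s)^{-1/\alpha-1/2}\|f\|_1$ (multiplied by the appropriate Gaussian factor when $\eta=1$); integrating over $s\in[0,t/2]$ produces exactly the sharp factor $t^{-1/\alpha-1/2}\|f\|_1(t/R_1^\alpha)^{1+\alpha^{-1}}$. In the complementary off-diagonal zone, the off-diagonal part of $\HHq{q}{0;\eta}$ yields a $\rd h$-integral $\int|h|^{-1-\alpha(1+q)}\,\rd h$ dominated by the contribution near $|h|\asymp R_1$, producing a factor $(t/R_1^\alpha)^{1+q}$; this is bounded by the target $(t/R_1^\alpha)^{1/2+q}$ (respectively $(t/R_1^\alpha)^{1+\alpha^{-1}}$ when $q>\alpha^{-1}$) because $R_1\ge\rho$ forces $t/R_1^\alpha\le 1$.

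\textbf{Continuous exit and the Gaussian factor.} A continuous exit requires the second-coordinate diffusion to travel $(R_1/8)^{\alpha/2}$ in time at most $t/2\asymp \rho^\alpha$; the uniform ellipticity of $a_{22}$ coming from \eqref{diff_ellip} and the standard Gaussian exit estimate for a one-dimensional elliptic diffusion give $\bP^x(\tau\le t/2,\;X_\tau=X_{\tau-})\le C\exp(-cR_1^\alpha/t)$. Combined with the on-diagonal bound from Proposition~\ref{prop:uhkd} applied to $P_{t-\tau}f$,
\[
I_C\le Ct^{-1/\alpha-1/2}\|f\|_1 \exp(-c\,2^{\alpha\theta_1}),
\]
which is far better than the target. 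In the case $\eta=1$ the conclusion's factor $\exp(-|x_0^2-y_0^2|^2/(ct))$ is extracted from the Gaussian factor of $\HHq{q}{0;1}$ in $P_{t-s}f$ via the triangle inequality in the second coordinate, \emph{provided} $|x_0^2-y_0^2|$ is substantially larger than the scale $(R_1/8)^{\alpha/2}+(\rho/8)^{\alpha/2}$; in the complementary regime the target exponential is of order one, and the auxiliary hypothesis $\HHq{0}{1;0}$ then supplies the sharp first-coordinate exponent $1+\alpha^{-1}$ without requiring any second-coordinate decay.

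\textbf{Main obstacle.} The most delicate point is the coupling in the case $\eta=1$: converting the Gaussian $\exp(-|z^2-y^2|^2/c(t-s))$ of $\HHq{q}{0;1}$ into the conclusion's $\exp(-|x_0^2-y_0^2|^2/(ct))$ while simultaneously retaining the sharp polynomial decay in the first coordinate. This is precisely why $\HHq{0}{1;0}$ is separately assumed in case~(ii): it covers the regime where the induction-step Gaussian has already been absorbed by the geometry of $\cQ(x_0,R_1/8)$. A secondary bookkeeping issue, in both cases, is the emergence of the sharp exponent $1+\alpha^{-1}$ in the landing zone; it results from the precise matching of the slab width $\rho=t^{1/\alpha}$, the jump intensity $R_1^{-1-\alpha}$, and the on-diagonal factor $(t-s)^{-1/\alpha-1/2}$.
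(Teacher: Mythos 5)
Your decomposition differs from the paper's (the paper sums over dyadic shells $A_k=y_0+\cD_k$ around $y_0^1$ and, for $\eta=1$, further splits each shell by a strip $S=\{z:\tfrac12|x_0^2-y_0^2|<|x_0^2-z^2|<\tfrac32|x_0^2-y_0^2|\}$, while you split by jump versus continuous exit and by landing zone), but for $\eta=0$ the two organizations are essentially equivalent and your computations in the landing zone and at the continuous-exit face do produce the right scaling.

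The gap is in the $\eta=1$ case, in what you call the ``complementary regime'' $|x_0^2-y_0^2|\lesssim (R_1/8)^{\alpha/2}$. You claim the target exponential $\exp(-|x_0^2-y_0^2|^2/(ct))$ is of order one there, so $\HHq{0}{1;0}$ alone suffices. But $R_1=2^{\theta_1}\rho$ can be arbitrarily much larger than $\rho=t^{1/\alpha}$, so in that regime $|x_0^2-y_0^2|$ can be anywhere between $t^{1/2}$ and $(R_1/8)^{\alpha/2}$; the target exponential can then be as small as $\exp(-c\,R_1^\alpha/t)$, which is not of order one. Your bound from $\HHq{0}{1;0}$ has no Gaussian factor at all, so it overshoots the target by a factor $\exp(c\,R_1^\alpha/t)$. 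The missing ingredient is that when $X_{\tau-}$ has drifted in the second coordinate to within the strip $S$ (the only way the landing $X_\tau$ can be Gaussian-close to $y_0$), this drift itself is exponentially costly. The paper implements this by interpolating $\HHq{0}{1;0}$ with the rough bound of \autoref{theo:uhk} (see \eqref{suhk1}) to obtain a heat-kernel bound that simultaneously has first-coordinate exponent $\beta'>\alpha^{-1}$ (hence integrable in $z^1$) and residual Gaussian decay; this feeds into \eqref{exp_upper} and \eqref{key1}, which bound the time spent in $\cQ_0\cap S$ and the probability that $X^2$ drifts distance $\asymp|x_0^2-y_0^2|$ by $C\exp(-c|x_0^2-y_0^2|^2/t)$. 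So $\HHq{0}{1;0}$ is not used as a substitute for the Gaussian in the bad regime; it is used to prove the exit/drift estimate that \emph{supplies} the Gaussian there. Without some version of this argument, your ``Main obstacle'' remains unresolved.
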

Note that there exists $C_3=C_3(\alpha)\ge2$ such that for any $a, b>0$ with $2a\le b$, $a^{\alpha/2}+(b/C_3)^{\alpha/2}\le b^{\alpha/2}$. Thus, for $x\in \cQ(x_0, \rho/8)$, we see that $\cQ(x, R_1/(8C_3))\subset \cQ(x_0, R_1/8)$.  Thus, by \autoref{prop:survival}
\begin{align}\label{exitupper}
\bP^{x}(\tau_{\cQ(x_0, R_1/8)}\le t/2)\le \bP^{x}(\tau_{\cQ(x, R_1/(8C_3))}\le t/2)\le c t R_1^{-\alpha}. 
\end{align}

For given $t>0$ and $k\in\N$, let $\rho=t^{1/\alpha}$ and
\begin{align}\label{def_Dk}
\cD_0:=D_0\times \R:= (-2\rho, 2\rho)\times\R, \quad \cD_k:=D_k\times\R:=[2^k\rho, 2^{k+1}\rho)\times \R.\end{align}
Note that $D_k (k\in\N_0)$ is the same set defined in \cite{KKK22}. 
For given $x_0,y_0\in\R^2$, let $A_k:=y_0+\cD_k$. Let $C_4:=8^{2/\alpha}$ so that $C_4^{-\alpha/2}=\frac18$.
For $x\in \cQ(x_0, \rho/8)$ and $\tau=\tau_{\cQ(x_0, R_1/C_4)}$, set
\begin{align*}
\Phi(k)&=\bE^x\big[\1_{\{\tau\le t/2\}}\1_{\{X_{\tau}\in A_k\}}P_{t-\tau}f(X_{\tau})\big],\quad k\in\N_0.
\end{align*}
Then,
\begin{align*}
\bE^x\big[\1_{\{\tau\le t/2\}}P_{t-\tau}f(X_{\tau})\big]=\sum^{\infty}_{k=0}\bE^x\big[\1_{\{\tau\le t/2\}}\1_{\{X_{\tau}\in A_k\}}P_{t-\tau}f(X_{\tau})\big]=\sum^{\infty}_{k=0}\Phi(k).
\end{align*}

We observe that given $x\in \cQ(x_0, \rho/8)$, $k+1\le \theta_1$,
\begin{align}\label{upperexit}
\bE^x\left[\int^{t/2\wedge \tau}_{0}\int_{I^1_k}\frac{1}{|X^1_s-\ell|^{1+\alpha}}\rd \ell \rd s\right]
\le \frac{ct}{R_1^{1+\alpha}}2^{k}\rho,
\end{align}
where $I^1_k=\{\ell\in\R:|\ell-y_0^1|\in [2^k\rho, 2^{k+1}\rho)\}$.
Indeed, for $w\in\cQ(x_0, R_1/8)$ and $z\in A_k$ (i.e., $|z^1-y_0^1|\in[2^{k}\rho, 2^{k+1}\rho)$) 
\begin{align*}
|w^1-z^1|&\ge |x_0^1-y_0^1|-|w^1-x_0^1|-|z^1-y_0^1|
\ge \frac54R_1 -\frac{R_1}8-2^{k+1}\rho\ge \frac98R_1-2^{\theta_1}\rho=\frac{R_1}8.
\end{align*}

\vspace{5mm}

\begin{proof}[Proof of \autoref{prop:main_twodim}]
	
\subsection*{Case 1: $\eta=0$} 
	We first derive an upper bound for 
	$$P_{t-\tau}f(z)=\int_{\cQ(y_0, \frac{\rho}{C_3})}p({t-\tau}, z,y)f(y)\rd y$$
	for $z\in A_k$ and $t/2\le t-\tau\le t$. Let $y\in \cQ(y_0, \rho/C_3)$ and $z\in A_k$ for $k\ge1$. Then,
	\begin{align*}
	|z^1-y^1|&\ge |z^1-y_0^1|-|y_0^1-y^1|\ge 2^{k}\rho-\rho/C_3\ge \frac122^{k}\rho.
	\end{align*}
	Thus, for $y\in \cQ(y_0, \rho/C_3)$ and $z\in A_k$ for $k\ge0$,
	\begin{align*}
	1\wedge \frac{t}{|z^1-y^1|^{\alpha}}\le 2^{\alpha}\big(1\wedge 2^{-k\alpha}\big).
	\end{align*}
	Thus,  by $\HHq{q}{0; 0}$, for $z\in A_k$ ($k\ge0$),
	\begin{align}\label{upperptf}
	P_{t-\tau}f(z)&=\int_{\cQ(y_0, \frac{\rho}{C_3})}p({t-\tau},z,y)f(y)\rd y\le C t^{-1/\alpha-1 /2}\|f\|_12^{-k\alpha q}\nn\\
	&\le C t^{-1/\alpha-1 /2}\|f\|_1\left(\frac{t}{R_1^{\alpha}}\right)^q2^{(\theta_1-k)\alpha q}.
	\end{align}
	By \eqref{upperexit} and \eqref{upperptf},
	\begin{align*}
	\sum^{\theta_1-1}_{k=0}\Phi(k)
	&\le C t^{-1/\alpha-1 /2}\|f\|_1\left(\frac{t}{R_1^{\alpha}}\right)^q\sum^{\theta_1-1}_{k=0}2^{(\theta_1-k)\alpha q}\bE^x\big[\1_{\{\tau\le t/2\}}\1_{\{X_{\tau}\in A_k\}}\big]\\
	&\le C t^{-1/\alpha-1 /2}\|f\|_1\left(\frac{t}{R_1^{\alpha}}\right)^q\sum^{\theta_1-1}_{k=0}2^{(\theta_1-k)\alpha q}2^{-\theta_1}2^{-\theta_1\alpha}2^k.
	\end{align*}
	If $q< \alpha^{-1}$, then
	\begin{align*}
	\sum^{\theta_1-1}_{k=0}2^{(\theta_1-k)\alpha q}2^{-\theta_1}2^{-\theta_1\alpha}2^k
	= 2^{-\theta_1\alpha}\sum^{\theta_1-1}_{k=0}2^{(\theta_1-k)(\alpha q-1)}
	\le 2^{-\theta_1\alpha}\sum^{\infty}_{l=1}2^{(\alpha q-1)l}
	\le C2^{-\theta_1\alpha}.
	\end{align*}
	If $q> \alpha^{-1}$, then 
	\begin{align*}
	\sum^{\theta_1-1}_{k=0}2^{(\theta_1-k)\alpha q}2^{-\theta_1(1+\alpha)}2^k
	=2^{-\theta_1(1+\alpha-\alpha q)}\sum^{\theta_1-1}_{k=0}2^{-k(\alpha q-1)}\le C2^{-\theta_1\alpha(1+\alpha^{-1}-q)}.
	\end{align*}
	Thus,
	\begin{align}\label{part1}
	\sum^{\theta_1-1}_{k=0}\Phi(k)&\le C t^{-1/\alpha-1 /2}\|f\|_1\left(\frac{t}{R_1^{\alpha}}\right)^q\cdot
	\begin{cases}
	2^{-\theta_1\alpha}&\mbox{if}\;\;q< \alpha^{-1};\\
	2^{-\theta_1\alpha(1+\alpha^{-1}-q)}&\mbox{if}\;\;q> \alpha^{-1}.
	\end{cases}
	\end{align}

	Now, using \eqref{upperptf} and \autoref{prop:survival}
	\begin{align}\label{part2}
	\sum^{\infty}_{k=\theta_1}\Phi(k)
	&\le C t^{-1/\alpha-1 /2}\|f\|_1\left(\frac{t}{R_1^{\alpha}}\right)^q\bP^x\big(\tau\le t/2\big)\sum^{\infty}_{k=\theta_1}2^{(\theta_1-k)\alpha q}\nn\\
	&\le C t^{-1/\alpha-1 /2}\|f\|_1\left(\frac{t}{R_1^{\alpha}}\right)^q2^{-\theta_1\alpha}.
	\end{align}
	Note that for $q\in(\alpha^{-1}, 1+\alpha^{-1})$, we see $0<1+\alpha^{-1}-q<1$. Thus, by \eqref{part1} and \eqref{part2},
	\begin{align*}
	\sum^{\infty}_{k=0}\Phi(k)&\le C t^{-1/\alpha-1 /2}\|f\|_1\left(\frac{t}{R_1^{\alpha}}\right)^q\cdot
	\begin{cases}
	2^{-\theta_1\alpha}&\mbox{if}\;\;q< \alpha^{-1};\\
	2^{-\theta_1\alpha(1+\alpha^{-1}-q)}&\mbox{if}\;\;q> \alpha^{-1}.
	\end{cases}
	\end{align*}
	Since $2^{-\theta_1\alpha}=\frac{t}{R_1^{\alpha}}$, we obtain \autoref{prop:main_twodim} for $\eta=0$ by the above inequality.
	
	\medskip
	
\subsection*{Case 2: $\eta=1$ and $\HHq{0}{1;0}$.} 	

Let $C_5=8/8^{\alpha/2}>1$.
	If $|x_0^2-y_0^2|\le C_5t^{1/2}$, then for any $y\in \cQ(y_0, \rho/C_4)$ and $z\in A_k$ $(k\ge0)$, we have
	\begin{align*}
	\exp\left(-\frac{|z^2-y^2|^2}{ct}\right)\le 1 \le e^{C_5^2/c}\exp\left(-\frac{|x_0^2-y_0^2|^2}{ct} \right).
	\end{align*}
	Thus, the result follows from the same argument as in the {\bf Case 1}.
	Thus, in the following, we only consider the case $|x_0^2-y_0^2|> C_5t^{1/2}$.
	For the rest of the proof, we let $\cQ_0=\cQ(x_0, R_1/8)$ for notational simplicity. By $\HHq{0}{1;0}$, we have that for any $\xi,\zeta\in \R^2$
	\begin{align*}
	p(t, \xi, \zeta)&\le C t^{-1/\alpha-1 /2} \bigg(1\wedge\frac{t}{|\xi^1-\zeta^1|^{\alpha}}\bigg)^{1+\alpha^{-1}}.
	\end{align*} 
	By \autoref{theo:uhk}, we also have
	\begin{align*}
	p(t, \xi, \zeta)\le C t^{-1/\alpha-1 /2}\bigg(1\wedge\frac{t}{|\xi^1-\zeta^1|^{\alpha}}\bigg)^{\frac13}\exp\bigg(-\frac{|\xi^2-\zeta^2|^2}{c\,t}\bigg).
	\end{align*}
	Thus, for $0<\beta<1$, we see that for any $t>0$ and $\xi,\zeta\in \R^2$, 
	\begin{align}\label{suhk1}
	p(t,\xi,\zeta)&=p(t,\xi,\zeta)^{1-\beta}p(t,\xi,\zeta)^{\beta}\nn\\
	&\le C t^{-1/2-1/\alpha}\exp\left(-\frac{(1-\beta)|\xi^2-\zeta^2|^2}{c\,t}\right)\left(1\wedge \frac{t}{|\xi^1-\zeta^1|^{\alpha}}\right)^{\frac{1-\beta}{3}+\beta(1+\alpha^{-1})}.\quad
	\end{align}
	Take $0<\frac{\alpha^{-1}-1/3}{\alpha^{-1}+2/3}<\beta<1$ so that $\beta':=\frac{1-\beta}{3}+\beta(1+\alpha^{-1})>\alpha^{-1}$. Then, we observe that for any $s>0$,
	\begin{align}\label{eq_fin}
	&\int_{|\xi^1-z^1|\ge 0 }s^{-1/\alpha}\left(1\wedge\frac{s}{|\xi^1-z^1|^{\alpha}}\right)^{\beta'}\rd z^1 \nn\\
	&\le \int_{0\le |\xi^1-z^1| < s^{1/\alpha} }s^{-1/\alpha}\,\rd z^1+\int_{ |\xi^1-z^1| \ge s^{1/\alpha} }s^{-1/\alpha}\left(\frac{s}{|\xi^1-z^1|^{\alpha}}\right)^{\beta'}\rd z^1\nn\\
	&\le C+s^{-1/\alpha+\beta'}\int_{s^{1/\alpha}}^{\infty}\frac{1}{u^{\alpha\beta'}}\,\rd u\nn\\
	&\le C.
	\end{align}

	Let $S:=\{z\in \R^2: \frac12|x_0^2-y_0^2|<|x_0^2-z^2|<\frac32|x_0^2-y_0^2|\}$ and $S_k:=S\cap A_k$.
	Then,
	\begin{align}\label{decompose}
	\Phi(k)&=\bE^{x}\big[\1_{\{\tau\le t/2\}}\1_{\{X_{\tau}\in S_k\}}P_{t-\tau}f(X_\tau)\big]+\bE^{x}\big[\1_{\{\tau\le t/2\}}\1_{\{X_{\tau}\in A_k\setminus S_k\}}P_{t-\tau}f(X_\tau)\big]\nn\\
	&=:\Phi_1(k)+\Phi_2(k).
	\end{align}
	
	We first derive the upper bound for $\sum^{\theta_1-1}_{k=0}\Phi(k)$. By $\HHq{q}{0;1}$, we have that for all $k\ge0$,  $w\in A_k$ and $y\in \cQ(y_0, \rho/C_3)$,
	\begin{align*}
	p(t-\tau, w,y)&\le C t^{-1/\alpha-1 /2}\bigg(1\wedge \frac{t}{|w^1-y^1|^{\alpha}}\bigg)^{q} \exp\bigg(-\frac{|w^2-y^2|^2}{c\,t}\bigg)\\
	&\le C t^{-1/\alpha-1 /2}2^{\alpha q}\big(1\wedge 2^{-k\alpha q}\big)\\
	&\le C t^{-1/\alpha-1 /2}2^{-k\alpha q},
	\end{align*}
	which implies
	\begin{align*}
	P_{t-\tau}f(w)=\int_{\cQ(y_0, \frac{\rho}{C_3})}p(t-\tau,w,y)f(y)\rd y
	\le C\|f\|_1t^{-1/\alpha-1 /2}2^{-k\alpha q}.
	\end{align*}
	Thus, 
	\begin{align}\label{upper_I1}
	\Phi_1(k)&=\bE^{x}\big[\1_{\{\tau\le t/2\}}\1_{\{X_{\tau}\in S_k\}}P_{t-\tau}f(X_\tau)\big]\nn\\
	&=\bE^{x}\big[\1_{\{\tau\le t/2\}}\1_{\{X_{\tau-}\in \cQ_0\cap S\}}\1_{\{X_{\tau}\in S_k\}}P_{t-\tau}f(X_\tau)\big]\nn\\
	&\le C\|f\|_1t^{-1/\alpha-1 /2}2^{-k\alpha q}\bE^{x}\big[\1_{\{\tau\le t/2\}}\1_{\{X_{\tau-}\in \cQ_0\cap S\}}\1_{\{X_{\tau}\in S_k\}}\big].
	\end{align}
	Using the L\'evy system, we have
	\begin{align*}
	\bE^{x}\big[\1_{\{\tau\le t\}}\1_{\{X_{\tau-}\in \cQ_0\cap S\}}\1_{\{X_{\tau}\in S_k\}}\big]=
	\bE^{x}\int_0^{\tau\wedge t}\1_{\{X_{s}\in \cQ_0\cap S\}}\int_{S_k}J(X_s, w) m(\rd w) \rd s
	\end{align*}
	Let $f(s,x,y)=\1_{\{\cQ_0\cap S\}}(x)\1_{\{S_k\}}(y)$. Then, 
	$f(s, X_{s-}, X_s)=\1_{\{X_{s-}\in\cQ_0\cap S\}}\1_{\{X_{s}\in S_k\}}(y)=0$ for $s<\tau$ since $S_k\subset \cQ_0^c$. Thus,
	\begin{align*}
	\bE^{x}\left[\sum_{s\le \tau\wedge t}f(s, X_{s-}, X_s)\right]
	&=\bE^{x}\left[\sum_{s\le t}\1_{\{t<\tau\}}f(s, X_{s-}, X_s)\right]+\bE^{x}\left[\sum_{s\le \tau}\1_{\{\tau\le t\}}f(s, X_{s-}, X_s)\right]\\
	&=\bE^{x}\left[\1_{\{\tau\le t\}}f(\tau, X_{\tau-}, X_{\tau})\right]\nn\\
	&=\bE^{x}\left[\1_{\{\tau\le t\}}\1_{\{X_{\tau-}\in \cQ_0\cap S\}}\1_{\{X_{\tau}\in S_k\}}\right].
	\end{align*}
	Thus, by the L\'evy system,
	\begin{align*}
	&\bE^{x}\big[\1_{\{\tau\le t/2\}}\1_{\{X_{\tau-}\in \cQ_0\cap S\}}\1_{\{X_{\tau}\in S_k\}}\big]
	=\bE^{x}\left[\sum_{s\le \tau\wedge(t/2)}f(s, X_{s-}, X_s)\right]\\
	&=\bE^{x}\int_0^{\tau\wedge(t/2)}\int_{S_k}f(s, X_s, w)J(X_s, w) m(\rd w) \rd s\\
	&=\bE^{x}\int_0^{\tau\wedge(t/2)}\1_{\{X_{s}\in \cQ_0\cap S\}}\int_{S_k}J(X_s, w) m(\rd w) \rd s.
	\end{align*}
	Using this, we have that for $k+1\le \theta_1$
	\begin{align}\label{upper_I3}
	\Phi_1(k)&\le C\|f\|_1t^{-1/\alpha-1 /2}2^{-k\alpha q}\bE^{x}\int_0^{\tau\wedge(t/2)}\1_{\{X_{s}\in \cQ_0\cap S\}}\int_{S_k}J(X_s, w) m(\rd w) \rd s\nn\\
	&\le C\|f\|_1t^{-1/\alpha-1 /2}2^{-k\alpha q}\bE^{x}\int_0^{\tau\wedge(t/2)}\1_{\{X_{s}\in \cQ_0\cap S\}}\int_{A^1_k}\frac{1}{|(X_s)^1-w^1|^{1+\alpha}} \rd w^1 \rd s\nn\\
	&\le C\|f\|_1t^{-1/\alpha-1 /2}2^{-k\alpha q}\frac{2^{k}\rho}{R_1^{1+\alpha}}\int_0^{t}\bP^x(X_{s}\in \cQ_0\cap S) \rd s.
	\end{align}
	By \eqref{suhk1} and \eqref{eq_fin},
	\begin{align*}
	\bP^x(X_{s}\in \cQ_0\cap S)&=\int_{\cQ_0\cap S}p(s, x, z) \rd z\nn\\
	&\le C\int_{\cQ_0\cap S}s^{-1/\alpha-1/2}\left(1\wedge\frac{s}{|x^1-z^1|^{\alpha}}\right)^{\beta'} \exp\left(-\frac{|x^2-z^2|^2}{cs}\right)\rd z\nn\\
	&= Cs^{-1/\alpha}\int_{0\le |x^1-z^1| \le R_1 }\left(1\wedge\frac{s}{|x^1-z^1|^{\alpha}}\right)^{\beta'}\rd z^1 \nn\\
	&\qquad\times s^{-1/2}\int_{\frac12|x_0^2-y_0^2|\le |x^2-z^2| \le \frac32|x_0^2-y_0^2| }\exp\left(-\frac{|x^2-z^2|^2}{cs}\right) \rd z^2
	\nn\\
	&\le C\frac{|x_0^2-y_0^2|}{s^{1/2}}\exp\left(-\frac{|x_0^2-y_0^2|^2}{cs}\right),
	\end{align*}
	which yields
	\begin{align}\label{exp_upper}
	\int^t_0\bP^x(X_{s}\in \cQ_0\cap S)\,\rd s \le Ct\exp\left(-\frac{|x_0^2-y_0^2|^2}{2ct}\right).
	\end{align}
	This together with \eqref{upper_I3} implies that for $k+1\le \theta_1$
	\begin{align}\label{upper_Is}
	\Phi_1(k)\le C\|f\|_1t^{-1/\alpha-1 /2}\exp\left(-\frac{|x_0^2-y_0^2|^2}{2ct}\right)2^{-k\alpha q}\frac{t2^{k}\rho}{R_1^{1+\alpha}}.
	\end{align}

	\medskip
	
	On the other hand,  for any $y\in \cQ(y_0, \rho/C_3)$ and $z\in A_k\setminus S_k$, we have
	\begin{align}\label{expII}
	|z^2-y^2|\ge  \frac38|x_0^2-y_0^2|.
	\end{align}
	Indeed, for $|x_0^2-z^2|\ge \frac32|x_0^2-y_0^2|$, we see that
	\begin{align*}
	|z^2-y^2|\ge |x_0^2-z^2|-|x_0^2-y_0^2|-|y_0^2-y^2|\ge \frac12|x_0^2-y_0^2|-(\rho/C_3)^{\alpha/2}\ge \frac38|x_0^2-y_0^2|,
	\end{align*}
	where the last inequality follows from $|x_0^2-y_0^2|\ge t^{1/2}$ and $C_3^{-\alpha/2}=\frac18$. Similarly, for $|x_0^2-z^2|< \frac12|x_0^2-y_0^2|$, we see that
	\begin{align*}
	|z^2-y^2|&\ge |x_0^2-y_0^2|-|x_0^2-z^2|-|y^2-y_0^2|\ge \frac12|x_0^2-y_0^2|-(\rho/C_3)^{\alpha/2}\ge \frac38|x_0^2-y_0^2|.
	\end{align*}
	By \eqref{expII} and $\HHq{q}{0;1}$, we have that for $z\in A_k\setminus S_k$ and $y\in \cQ(y_0, \rho/C_3)$,
	\begin{align*}
	p(t-\tau, z,y)&\le C t^{-1/\alpha-1 /2}\bigg(1\wedge \frac{t}{|z^1-y^1|^{\alpha}}\bigg)^{q} \exp\bigg(-\frac{|z^2-y^2|^2}{c\,t}\bigg)\\
	&\le C t^{-1/\alpha-1 /2}2^{\alpha q}\big(1\wedge 2^{-k\alpha q}\big)\exp\bigg(-\frac{|x_0^2-y_0^2|^2}{c'\,t}\bigg)\\
	&\le C t^{-1/\alpha-1 /2}2^{-k\alpha q}\exp\bigg(-\frac{|x_0^2-y_0^2|^2}{c'\,t}\bigg).
	\end{align*}
	This implies
	\begin{align*}
	P_{t-\tau}f(z)=\int_{\cQ(y_0, \frac{\rho}{C_3})}p({t-\tau}, z,y)f(y)\rd y
	\le C\|f\|_1t^{-1/\alpha-1 /2}2^{-k\alpha q}\exp\bigg(-\frac{|x_0^2-y_0^2|^2}{4c\,t}\bigg).
	\end{align*}
	Thus, by the above inequality and \eqref{upperexit}, for $k+1\le \theta_1$,
	\begin{align}
	\Phi_2(k)&\le C\|f\|_1t^{-1/\alpha-1 /2}2^{-k\alpha q}\exp\bigg(-\frac{|x_0^2-y_0^2|^2}{4c\,t}\bigg)\bE^{x}\big[\1_{\{\tau\le t/2\}}\1_{\{X_{\tau}\in A_k\setminus S_k\}}\big]\label{upper_II}\\
	&\le C\|f\|_1t^{-1/\alpha-1 /2}2^{-k\alpha q}\frac{t2^{k}\rho}{R_1^{1+\alpha}}\exp\bigg(-\frac{|x_0^2-y_0^2|^2}{4c\,t}\bigg).\label{upper_IIs}
	\end{align}
	By \eqref{upper_Is} and \eqref{upper_IIs},
	\begin{align*}
	\sum^{\theta_1-1}_{k=0}\Phi(k)
	&\le C t^{-1/\alpha-1 /2}\|f\|_1\left(\frac{t}{R_1^{\alpha}}\right)^q\exp\bigg(-\frac{|x_0^2-y_0^2|^2}{4ct}\bigg)\sum^{\theta_1-1}_{k=0}2^{-k\alpha q}\frac{t2^{k}\rho}{R_1^{1+\alpha}}\\
	&\le C t^{-1/\alpha-1 /2}\|f\|_1\left(\frac{t}{R_1^{\alpha}}\right)^q\exp\bigg(-\frac{|x_0^2-y_0^2|^2}{4ct}\bigg)\sum^{\theta_1-1}_{k=0}2^{(\theta_1-k)\alpha q}2^{-\theta_1}2^{-\theta_1\alpha}2^k.
	\end{align*}
	If $q< \alpha^{-1}$, then
	\begin{align*}
	\sum^{\theta_1-1}_{k=0}2^{(\theta_1-k)\alpha q}2^{-\theta_1}2^{-\theta_1\alpha}2^k
	= 2^{-\theta_1\alpha}\sum^{\theta_1-1}_{k=0}2^{(\theta_1-k)(\alpha q-1)}
	\le 2^{-\theta_1\alpha}\sum^{\infty}_{l=1}2^{(\alpha q-1)l}
	\le C2^{-\theta_1\alpha}.
	\end{align*}
	If $q> \alpha^{-1}$, then 
	\begin{align*}
	\sum^{\theta_1-1}_{k=0}2^{(\theta_1-k)\alpha q}2^{-\theta_1(1+\alpha)}2^k
	=2^{-\theta_1(1+\alpha-\alpha q)}\sum^{\theta_1-1}_{k=0}2^{-k(\alpha q-1)}\le C2^{-\theta_1\alpha(1+\alpha^{-1}-q)}.
	\end{align*}
	Thus,
	\begin{align}\label{part21}
	\sum^{\theta_1-1}_{k=0}\Phi(k)&\le C t^{-1/\alpha-1 /2}\|f\|_1\left(\frac{t}{R_1^{\alpha}}\right)^q\exp\bigg(-\frac{|x_0^2-y_0^2|^2}{4ct}\bigg)\cdot
	\begin{cases}
	2^{-\theta_1\alpha}&\mbox{if}\;\;q< \alpha^{-1};\\
	2^{-\theta_1\alpha(1+\alpha^{-1}-q)}&\mbox{if}\;\;q> \alpha^{-1}.
	\end{cases}
	\end{align}

	Now, we derive the upper bound for $\sum^{\infty}_{k=\theta_1}\Phi(k)$.
	Using \eqref{upper_II} and \autoref{prop:survival}
	\begin{align}\label{part2II2}
	\sum^{\infty}_{k=\theta_1}\Phi_2(k)
	&\le C t^{-1/\alpha-1 /2}\|f\|_1\left(\frac{t}{R_1^{\alpha}}\right)^q\exp\bigg(-\frac{|x_0^2-y_0^2|^2}{4ct}\bigg)\bP^x\big(\tau\le t/2\big)\sum^{\infty}_{k=\theta_1}2^{(\theta_1-k)\alpha q}\nn\\
	&\le C t^{-1/\alpha-1 /2}\|f\|_1\left(\frac{t}{R_1^{\alpha}}\right)^q\exp\bigg(-\frac{|x_0^2-y_0^2|^2}{4ct}\bigg)2^{-\theta_1\alpha}.
	\end{align}

	To obtain the upper bound for $\sum^{\infty}_{k=\theta_1}\Phi_1(k)$, we first check that there exist $C,c>0$ such that for any $\eps\in(0,1)$
	\begin{align}\label{upper_gaussian}
	\bE^{x}\big[\1_{\{\tau\le t/2\}}\1_{\{X_{\tau-}\in \cQ_0\cap S\}}\big]
	\le C\left(\frac{t}{R_1^{\alpha}}\right)^{\eps}\exp\left(-c(1-\eps)\frac{|x_0^2-y_0^2|^2}{t}\right).
	\end{align}
	Indeed, by \eqref{suhk1}, \eqref{eq_fin} and \eqref{eq:ineq2}, we see that for $r_2:=|x_0^2-y_0^2|$ and $E:=\{z\in\R^2: |x_0^2-z^2|>\frac{r_2}{4}\}$ and $s\le t$,
	\begin{align}\label{key1}
	&\bP^x\big(|X_s^2-x_0^2|>\frac{r_2}{4}\big)\le\bP^x\big(X_t\in E\big)=\int_E p(t,x,z)\rd z\nn\\
	&\le C\int_{(0,\infty)}s^{-1/\alpha}\left(1\wedge\frac{t}{|x^1-z^1|^{\alpha}}\right)^{\beta(1+\alpha^{-1})} \rd z_1\int_{|x_0^2-z^2|> \frac{r_2}{4}}s^{-1/2}\exp\left(-c\frac{|x^2-z^2|^2}{s}\right) \rd z_2\nn\\
	&\le C\exp\left(-c\frac{|x_0^2-y_0^2|^2}{s}\right)\le C\exp\left(-c\frac{|x_0^2-y_0^2|^2}{t}\right).
	\end{align}
	In the third inequality, we used that for $x\in \cQ(x_0, \rho/8)$, $z\in E$ and $r_2> C_5t^{1/2}$, 
	$$|x^2-z^2|\ge |x_0^2-z^2|-|x^2-x_0^2|>|x_0^2-z^2|-\Big(\frac{\rho}{8}\Big)^{\alpha/2}\ge\frac12|x_0^2-z^2|>\frac18|x_0^2-y_0^2|.$$  
	By \eqref{key1} and \cite[Lemma 3.8]{BBCK09}, we have 
	\begin{align*}
	\bP^x\big(\sup_{s\le t/2}|X_s^2-x_0^2|>\frac{r_2}{2}\big)\le C\exp\left(-c\frac{|x_0^2-y_0^2|^2}{t}\right),
	\end{align*}
	which yields
	\begin{align*}
	\bE^{x}\big[\1_{\{\tau\le t/2\}}\1_{\{X_{\tau-}\in \cQ_0\cap S\}}\big]\le \bP^x\big(\sup_{s\le t/2}|X_s^2-x_0^2|>\frac{r_2}{2}\big)\le C\exp\left(-c\frac{|x_0^2-y_0^2|^2}{t}\right).
	\end{align*}
	On the other hand,  by \autoref{prop:survival}, we also have
	\begin{align*}
	\bE^{x}\big[\1_{\{\tau\le t/2\}}\1_{\{X_{\tau-}\in \cQ_0\cap S\}}\big]\le \bP^{x}\big(\tau\le t/2\big)\le C\frac{t}{R_1^{\alpha}}.
	\end{align*}
	Thus, for any $\eps\in(0,1)$
	\begin{align*}
	\bE^{x}\big[\1_{\{\tau\le t/2\}}\1_{\{X_{\tau-}\in \cQ_0\cap S\}}\big]
	&=\bE^{x}\big[\1_{\{\tau\le t/2\}}\1_{\{X_{\tau-}\in \cQ_0\cap S\}}\big]^{\eps}\bE^{x}\big[\1_{\{\tau\le t/2\}}\1_{\{X_{\tau-}\in \cQ_0\cap S\}}\big]^{1-\eps}\\
	&\le  C\Big(\frac{t}{R_1^{\alpha}}\Big)^{\eps}\exp\left(-c(1-\eps)\frac{|x_0^2-y_0^2|^2}{t}\right),
	\end{align*}
	which proves \eqref{upper_gaussian}.
	
	Using \eqref{upper_I1} and \eqref{upper_gaussian}, we have
	\begin{align}\label{part2I2}
	\sum^{\infty}_{k=\theta_1}\Phi_1(k)
	&\le C\|f\|_1t^{-1/\alpha-1 /2}2^{-k\alpha q}\bE^{x}\big[\1_{\{\tau\le t/2\}}\1_{\{X_{\tau-}\in \cQ_0\cap S\}}\1_{\{X_{\tau}\in \cup_k S_k\}}\big]\nn\\
	&\le C\|f\|_1t^{-1/\alpha-1 /2}2^{-k\alpha q}\bE^{x}\big[\1_{\{\tau\le t/2\}}\1_{\{X_{\tau-}\in \cQ_0\cap S\}}\big]\nn\\
	&\le C t^{-1/\alpha-1 /2}\|f\|_1\left(\frac{t}{R_1^{\alpha}}\right)^{q+\eps}\exp\left(-(1-\eps)\frac{|x_0^2-y_0^2|^2}{ct}\right).
	\end{align}
	Thus, by \eqref{part2II2} and \eqref{part2I2}, we obtain 
	\begin{align}\label{part22}
	\sum^{\infty}_{k=\theta_1}\Phi(k)\le C t^{-1/\alpha-1 /2}\|f\|_1\left(\frac{t}{R_1^{\alpha}}\right)^{q+\eps}\exp\bigg(-(1-\eps)\frac{|x_0^2-y_0^2|^2}{ct}\bigg).
	\end{align}
	Finally, by \eqref{part21}, \eqref{part22} and the fact that $0<1+\alpha^{-1}-q<1$ for $q\in(\alpha^{-1}, 1+\alpha^{-1})$, we have
	\begin{align*}
	\sum^{\infty}_{k=0}\Phi(k)&\le C t^{-1/\alpha-1 /2}\|f\|_1\exp\bigg(-\frac{|x_0^2-y_0^2|^2}{4ct}\bigg)\cdot
	\begin{cases}
	\left(\frac{t}{R_1^{\alpha}}\right)^{q+\frac12}&\mbox{if}\;\;q< \alpha^{-1};\\
	\left(\frac{t}{R_1^{\alpha}}\right)^{1+\alpha^{-1}}&\mbox{if}\;\;q> \alpha^{-1}.
	\end{cases}
	\end{align*}
	Indeed, for $q\in(\alpha^{-1}, 1+\alpha^{-1})$, we take $\eps=1+\alpha^{-1}-q\in(0,1)$.\end{proof}

\medskip

\begin{proof}[Proof of \autoref{lem:Gl-one} and \autoref{lem:Fl-one}]
Let $\rho=t^{1/\alpha}$ and $x_0, y_0\in\R^2$. If $|x_0^1-y_0^1|< \frac52\rho$, then the results follows by \autoref{theo:uhk}. Thus, we assume that $|x_0^1-y_0^1|\ge \frac52\rho$. Consider non-negative Borel functions $f, g$ on $\R^d$ supported in $\cQ(y_0, \frac{\rho}{8})$ and $\cQ(x_0, \frac{\rho}{8})$, respectively.  
We apply 
\autoref{l:2.1} with functions $f, g$,  subsets $U:=\cQ(x_0, s), V:=\cQ(y_0, s)$ for some $s>0$, $a=b=t/2$ and $\tau=\tau_{U}, \tau^{'}=\tau_{V}$.
The first term of the right hand side of \eqref{l:2.1ineq} is
\begin{align*}
\left\langle \E^{\cdot}\left[\1_{\{\tau\le t/2\}}P_{t-\tau}f(X_{\tau})\right], g\right\rangle=\int_{\cQ(x_0, \frac{\rho}{8})}\E^{x}\left[\1_{\{\tau\le t/2\}}P_{t-\tau}f(X_{\tau})\right]\,g(x) \rd x ,
\end{align*}
and a similar identity holds for the second term. Let $U:=\cQ(x_0, 2^{\theta_1}\rho)$, where $\theta_1\in\N$ satisfying $\frac542^{\theta_1}\rho\le |x_0^1-y_0^1|<\frac{10}{4} 2^{\theta_1}\rho$. Then, by \autoref{prop:main_twodim}, i.e., by \eqref{e:key}, 
\begin{align*}
&\left\langle\E^{\cdot}\left[\1_{\{\tau\le t/2\}}P_{t-\tau}f(X_{\tau})\right], g\right\rangle\nn\\
&\le  \,C t^{-1/\alpha-1/2} \|f\|_1\|g\|_1  \exp\bigg(-\frac{|x_0^2-y_0^2|^2}{c\,t}\bigg)^{\eta}\cdot\begin{cases}
	\Big(1\wedge\frac{t}{|x_0^1-y_0^1|^{\alpha}}\Big)^{\frac12+q}&\text{if}\;\;q<\alpha^{-1};\\
	\Big(1\wedge\frac{t}{|x_0^1-y_0^1|^{\alpha}}\Big)^{1+\alpha^{-1}}&\text{if}\;\;q>\alpha^{-1}.
	\end{cases}
\end{align*}
Similarly we obtain the second term of right hand side of \eqref{l:2.1ineq} and therefore,
\begin{align*}
\left\langle P_tf, g\right\rangle\le Ct^{-1/\alpha-1/2} \|f\|_1\|g\|_1\exp\bigg(-\frac{|x_0^2-y_0^2|^2}{c\,t}\bigg)^{\eta}\cdot\begin{cases}
	\Big(1\wedge\frac{t}{|x_0^1-y_0^1|^{\alpha}}\Big)^{\frac12+q}&\text{if}\;\;q<\alpha^{-1};\\
	\Big(1\wedge\frac{t}{|x_0^1-y_0^1|^{\alpha}}\Big)^{1+\alpha^{-1}}&\text{if}\;\;q>\alpha^{-1}.
	\end{cases}
\end{align*} 
Since $P_tf(x)=\int_{\Rd} p(t, x, y)f(y)\rd y$ and $p$ is a continuous function, we obtain the following estimate: for $t>0$ and $x_0, y_0\in\R^2$,
 \begin{align*}
 p(t, x_0,y_0)
& \le C t^{-1/\alpha-1/2}\exp\bigg(-\frac{|x_0^2-y_0^2|^2}{c\,t}\bigg)^{\eta}\cdot\begin{cases}
	\Big(1\wedge\frac{t}{|x_0^1-y_0^1|^{\alpha}}\Big)^{\frac12+q}&\text{if}\;\;q<\alpha^{-1};\\
	\Big(1\wedge\frac{t}{|x_0^1-y_0^1|^{\alpha}}\Big)^{1+\alpha^{-1}}&\text{if}\;\;q>\alpha^{-1}.
	\end{cases}
 \end{align*}
This proves  \autoref{lem:Gl-one} and \autoref{lem:Fl-one}.
\end{proof}

We have established the upper bound in \eqref{eq:main-estim} in the case $d=n=1$.

\subsection{Strategy in the general case}\label{sec:upper-dn-general} In the general case, further to the two conditions $\HHq{q}{0; 0}$, $\HHq{q}{0; n}$, we need two more conditions. To this end, assume $q\ge0$ and $l\in\{1,\dots, d-1\}$ be given. Then we define two new conditions as follows:

\medskip

\noindent{$\HHq{q}{l; 0}$} There exists $C_0 \geq 1$  such that for all $t>0$, $x,y\in\R^{d+n}$ with $|x^{1}-y^{1}|\le \dots\le |x^{d}-y^{d}|$ the following holds:
\begin{align}\label{Hlq}
p(t,x,y)&\le C_0t^{-d/\alpha-n/2} \prod^{d-l}_{i=1}\bigg(1\wedge\frac{t}{|x^{i}-y^{i}|^{\alpha}}\bigg)^q\prod^{d}_{i=d-l+1}\bigg(1\wedge\frac{t}{|x^{i}-y^{i}|^{\alpha}}\bigg)^{1+\alpha^{-1}}.
\end{align} 

\noindent{$\HHq{q}{l; n}$} There exist $C_0, c \geq 1$ such that for all $t>0$, $x,y\in\R^{d+n}$ with $|x^{1}-y^{1}|\le \dots\le |x^{d}-y^{d}|$ the following holds:
\begin{align}\label{wtHlq}
p(t,x,y)&\le C_0t^{-d/\alpha-n/2}\exp\bigg(-\frac{|\wt x-\wt y|^2}{c\,t}\bigg)\nn\\
&\qquad\times \prod^{d-l}_{i=1}\bigg(1\wedge\frac{t}{|x^{i}-y^{i}|^{\alpha}}\bigg)^q\prod^{d}_{i=d-l+1}\bigg(1\wedge\frac{t}{|x^{i}-y^{i}|^{\alpha}}\bigg)^{1+\alpha^{-1}} .
\end{align} 
Our overall aim of this section is to prove $\HHq{1+\alpha^{-1}}{d-1; n}$, which is equivalent to the upper bound in \eqref{eq:main-estim}. See \autoref{lem:reduction_cases} below.

\medskip

We have already mentioned that the final upper bound in \eqref{eq:main-estim} is the last conclusion in a certain iterative scheme. Let us explain this scheme. 

\medskip

Step 1:
\begin{alignat*}{4}
&\HHq{0}{0; 0} &&\hookrightarrow \HHq{\lambda_0}{0; 0} &&\hookrightarrow \HHq{2 \lambda_0}{0; 0} \ldots &&\hookrightarrow \HHq{N_0\lambda_0}{0; 0}\\
\hookrightarrow &\HHq{0}{1; 0} &&\hookrightarrow \HHq{\lambda_1}{1; 0} &&\hookrightarrow \,\, \ldots  \ldots \ldots \,\,  &&\hookrightarrow \HHq{N_1\lambda_1}{1; 0}\\
&&&\vdots&&&\vdots&\\
\hookrightarrow &\HHq{0}{d-1; 0} &&\hookrightarrow \HHq{\lambda_{d-1}}{d-1; 0} &&\hookrightarrow \ldots  \ldots \ldots &&\hookrightarrow \HHq{N_{d-1}\lambda_{d-1}}{d-1; 0}\\
\hookrightarrow &\HHq{1+\alpha^{-1}}{d-1; 0}.
\end{alignat*}
Every implication within each line of the above chain is a direct application of \autoref{lem:Gl-gen}, Part (i). The implication from the last condition in one line to the first condition in the next line follows from \autoref{lem:Fl-gen}, Part (i).

\medskip

Step 2: Independent from Step 1 we establish $\HHq{0}{0; n}$ with the help of \autoref{theo:uhk}.

\medskip

Step 3: With the help of condition $\HHq{0}{d; 0}:=\HHq{1+\alpha^{-1}}{d-1; 0}$, we establish:
\begin{alignat*}{4}
&\HHq{0}{0; n} &&\hookrightarrow \HHq{\lambda_0}{0; n} &&\hookrightarrow \HHq{2 \lambda_0}{0; n} \ldots &&\hookrightarrow \HHq{N_0\lambda_0}{0; n}\\
\hookrightarrow &\HHq{0}{1; n} &&\hookrightarrow \HHq{\lambda_1}{1; n} &&\hookrightarrow \,\, \ldots  \ldots \ldots \,\,  &&\hookrightarrow \HHq{N_1\lambda_1}{1; n} \\
&&&\vdots&&&\vdots&\\
\hookrightarrow &\HHq{0}{d-1; n} &&\hookrightarrow \HHq{\lambda_{d-1}}{d-1; n} &&\hookrightarrow \ldots  \ldots \ldots &&\hookrightarrow \HHq{N_{d-1}\lambda_{d-1}}{d-1; n}\\
\hookrightarrow &\HHq{1+\alpha^{-1}}{d-1; n}.
\end{alignat*}
Every implication within each line of the above chain is a direct application of \autoref{lem:Gl-gen}, Part (ii). The implication from the last condition in one line to the first condition in the next line follows from \autoref{lem:Fl-gen}, Part (ii).

\medskip

Note that by \autoref{theo:uhk}  we have $\HHq{\alpha/3}{0; n}$. Thus, the upper bound in \eqref{eq:main-estim} holds true for $t>0$, $x,y\in\R^{d+n}$ with  $\max_{i\in\{1,2,\dots,d\}}|x^i-y^i|\le \frac52 t^{1/\alpha}$. Hence it suffices to assume that one of the values $|x^i-y^i|$ is larger than $\frac52 t^{1/\alpha}$, see \autoref{def:theta_and_R} below.

\medskip

We can obtain the sharp upper bound in \eqref{eq:main-estim} by applying the following lemmas.

\begin{lemma}\label{lem:Gl-gen}
Assume condition $\HHq{q}{l; \eta}$ holds true for $l\in\{0, \ldots, d-1\}$, $q<\alpha^{-1}$. Further, assume  
\begin{itemize}
	\item[(i)] either $\eta=0$, 
	\item[(ii)] or $\eta=n$ and $\HHq{0}{d;0}$ holds true. 
\end{itemize}
Then $\HHq{q+\lambda_l}{l; \eta}$ holds true, where $\lambda_l>0$ depends only on $l$, $\alpha$ and satisfies $q+\lambda_l < 1 + \alpha^{-1}$.
\end{lemma}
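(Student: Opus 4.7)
My plan is to follow the blueprint of \autoref{lem:Gl-one}, with the single ``distinguished'' coordinate of the $d=n=1$ case now played by coordinate $d-l$---the largest among those still carrying the loose exponent $q$. Fix $t>0$ and $x_0,y_0\in\R^{d+n}$ and assume without loss of generality the ordering $|x_0^1-y_0^1|\le\cdots\le|x_0^d-y_0^d|$. If $|x_0^{d-l}-y_0^{d-l}|<\tfrac{5}{2}t^{1/\alpha}$, the desired bound already follows from \autoref{theo:uhk} together with the exponent $1+\alpha^{-1}$ on the last $l$ coordinates guaranteed by $\HHq{q}{l;\eta}$. Otherwise, set $\rho:=t^{1/\alpha}$, choose $\theta_{d-l}\in\N$ with $R_{d-l}:=2^{\theta_{d-l}}\rho$ satisfying $\tfrac{5}{4}R_{d-l}\le|x_0^{d-l}-y_0^{d-l}|<\tfrac{10}{4}R_{d-l}$, and apply \autoref{l:2.1} with $U=\cQ(x_0,R_{d-l}/8)$, $V=\cQ(y_0,R_{d-l}/8)$, and $a=b=t/2$, for test functions $f,g$ supported in $\cQ(y_0,\rho/8)$ and $\cQ(x_0,\rho/8)$ respectively.

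The core technical input will be a multidimensional analog of \autoref{prop:main_twodim}: for $\tau=\tau_U$ and $x\in\cQ(x_0,\rho/8)$,
\begin{align*}
\bE^x[\1_{\{\tau\le t/2\}}P_{t-\tau}f(X_\tau)]
&\le C\,t^{-d/\alpha-n/2}\|f\|_1\exp\!\left(-\tfrac{|\wt x_0-\wt y_0|^2}{c\,t}\right)^{\eta}\\
&\quad\times\prod_{i=1}^{d-l}\left(1\wedge\tfrac{t}{|x_0^i-y_0^i|^\alpha}\right)^{q+\lambda_l}\prod_{i=d-l+1}^{d}\left(1\wedge\tfrac{t}{|x_0^i-y_0^i|^\alpha}\right)^{1+\alpha^{-1}},
\end{align*}
where $\lambda_l>0$ depends only on $l$ and $\alpha$. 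Combining this with the symmetric estimate for the second term of \eqref{l:2.1ineq}, together with continuity of $p(t,\cdot,\cdot)$ and arbitrariness of $f,g$, yields the pointwise bound $\HHq{q+\lambda_l}{l;\eta}$.

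To prove the displayed estimate, I would partition $\R^{d+n}$ into dyadic annuli $A_k$ keyed to the $(d-l)$-th coordinate displacement from $y_0$, decompose the expectation as $\sum_k\Phi(k)$, and bound each $\Phi(k)$ by combining $\HHq{q}{l;\eta}$ (for the heat-kernel factor $p(t-\tau,X_\tau,y)$) with either the L\'evy system \eqref{eq:LS}---to estimate exits into $A_k$ via a jump in direction $d-l$ for $k<\theta_{d-l}$---or \autoref{prop:survival}---to control the total exit probability for $k\ge\theta_{d-l}$. Geometric summation produces the factor $(t/R_{d-l}^\alpha)^{\lambda_l}$ that upgrades the exponent on coordinate $d-l$ from $q$ to $q+\lambda_l$; by the ordering $|x_0^i-y_0^i|\le|x_0^{d-l}-y_0^{d-l}|$ for $i\le d-l$, the same gain simultaneously upgrades every coordinate $i\in\{1,\ldots,d-l\}$. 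In case (ii), the Gaussian factor is recovered as in Case~2 of \autoref{prop:main_twodim}: interpolate \autoref{theo:uhk} with $\HHq{0}{d;0}$ to obtain a bound of the form \eqref{suhk1}, split each annulus $A_k$ into a ``near'' subset $S_k$ and a ``far'' subset $A_k\setminus S_k$ according to $\wt x$-displacement, and apply an \eqref{upper_gaussian}-type bound on $S_k$ while exploiting the Gaussian decay of $p(t-\tau,X_\tau,y)$ directly on $A_k\setminus S_k$.

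The main obstacle is the bookkeeping across the three regimes of coordinates---$d-l$ unfinished jump directions, $l$ sharp jump directions, and $n$ diffusive directions---and in particular the verification that dyadic sums, L\'evy-system bounds, and Gaussian interpolations remain compatible across all three. The ordering hypothesis is what makes the single-coordinate exit argument suffice; extracting the gain uniformly across all $i\le d-l$ relies on the monotonicity of the factors $(1\wedge t/|x_0^i-y_0^i|^\alpha)$ in $i$. The explicit value $\lambda_l\in(0,\alpha^{-1})$ will emerge from summing geometric series, exactly as $\lambda_0=\tfrac12$ does in \autoref{lem:Gl-one}, and the condition $q+\lambda_l<1+\alpha^{-1}$ amounts to ensuring that the iteration stops below the sharp threshold so that \autoref{lem:Fl-gen} can subsequently promote the exponent to $1+\alpha^{-1}$ and increment $l$ by one.
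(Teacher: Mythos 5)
Your proposal correctly identifies the two essential mechanisms inherited from the $d=n=1$ case (the dyadic exit decomposition estimated via the L\'evy system and the survival bound, and the Gaussian interpolation for $\eta=n$), and it correctly identifies the ordering hypothesis as what makes a distinguished jump coordinate meaningful. However, there is a genuine gap in the central step: the ``redistribution via the ordering'' that is supposed to turn the single-coordinate gain into the uniform exponent $q+\lambda_l$ does not work the way you describe, and the paper's Proposition~\ref{prop:main_general} is structured precisely to avoid this.

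Concretely: if you exit from $U=\cQ(x_0,R_{d-l}/8)$ and track only the $(d-l)$-th displacement, then before the exit $X_{\tau-}$ can wander up to $R_{d-l}/8$ in \emph{every} jump coordinate. For coordinates $i<d-l$ with $R_i\ll R_{d-l}$ (which is generic under the ordering), $|X_\tau^i-y_0^i|$ can be arbitrarily small, so the factor $\bigl(1\wedge t/|X_\tau^i-y^i|^\alpha\bigr)^q$ coming from applying $\HHq{q}{l;\eta}$ to $P_{t-\tau}f(X_\tau)$ is $\asymp 1$: those coordinates simply drop out. Thus your decomposition produces, at best, a bound proportional to
$\prod_{j>d-l}(\cdot)^{1+\alpha^{-1}}\cdot\bigl(1\wedge t/|x_0^{d-l}-y_0^{d-l}|^\alpha\bigr)^{\frac12+q}$,
with \emph{no} surviving factors for $i\in\{1,\dots,d-l-1\}$. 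You then claim that, because $|x_0^i-y_0^i|\le|x_0^{d-l}-y_0^{d-l}|$, this implies $\prod_{i\le d-l}(\cdot_i)^{q+\lambda_l}$. This implication is false: writing $a_i:=\bigl(1\wedge t/|x_0^i-y_0^i|^\alpha\bigr)$ with $a_1\ge\cdots\ge a_{d-l}$, the needed inequality reads $a_{d-l}^{\frac12+q}\le C\prod_{i\le d-l} a_i^{q+\lambda_l}$, which since each $a_i\le1$ forces (already in the comparable case $a_1\asymp\cdots\asymp a_{d-l}$) the constraint $\frac12+q\ge(d-l)(q+\lambda_l)$. This fails as soon as $q\ge\frac{1}{2(d-l-1)}$, which occurs well before the iteration reaches $q\approx\alpha^{-1}$; so the redistribution breaks down exactly in the regime where the lemma must be applied.

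The paper avoids this by making the radius of the exit set a free parameter $j_0\in\{i_0,\dots,d-l\}$ (Proposition~\ref{prop:main_general}, exit set $B(x_0,R_{j_0}/8)$) and by decomposing exit events both by dyadic scale $k$ \emph{and} by the coordinate direction $i$ through the sets $A_k,A_k^i$ built from the multidimensional dyadic sets $D_k$ of \cite{KKK19}. With the smaller exit radius $R_{j_0}/8$, every coordinate $j$ with $j>j_0$ stays well-separated from $y_0$ (since $|X_\tau^j-x_0^j|<R_{j_0}/8\le R_j/8\lesssim|x_0^j-y_0^j|$), so the factor $(\cdot_j)^q$ survives for all $j>j_0$; only coordinates $j<j_0$ are discarded, and these are already unimportant when $j_0$ is close to $i_0$. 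It is the interplay between the two terms of \autoref{l:2.1} together with the $j_0$-parameter that produces the uniform gain $\lambda_l$ (which for this reason comes out much smaller than $\tfrac12$ in general); this machinery is precisely what \cite[Lemmas~2.5--2.7]{KKK19} encapsulate. Your proof sketch as written cannot reproduce this and would need the $j_0$-parameterized exit sets and the direction-refined decomposition $A_k^i$ to go through.
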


\begin{lemma}\label{lem:Fl-gen}
Assume condition $\HHq{q}{l; \eta}$ holds true for $l\in\{0, \ldots, d-1\}$ and  $q>\alpha^{-1}$. Further, assume 
\begin{itemize}
	\item[(i)] either $\eta=0$, 
	\item[(ii)] or $\eta=n$ and $\HHq{0}{d;0}$ holds true. 
\end{itemize}
Then $\HHq{0}{l+1; \eta}$ holds true, where $\HHq{0}{d; \eta}:=\HHq{1+\alpha^{-1}}{d-1; \eta}$.
\end{lemma}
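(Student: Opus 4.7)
The plan is to extend the $d=n=1$ proof of \autoref{lem:Fl-one} by iterating along the $(d-l)$-th coordinate, i.e., the largest among the coordinates whose exponent has not yet been sharpened. Fix $t>0$ and $x_0,y_0\in\R^{d+n}$ with $|x_0^1-y_0^1|\le\dots\le|x_0^d-y_0^d|$. If $|x_0^{d-l}-y_0^{d-l}|\le\tfrac52\, t^{1/\alpha}$, then $1\wedge t/|x_0^{d-l}-y_0^{d-l}|^\alpha$ is bounded below by a constant, so $\HHq{0}{l+1;\eta}$ follows directly from $\HHq{q}{l;\eta}$ (after discarding the first $d-l-1$ factors, which are $\le 1$, and absorbing the $(d-l)$-th factor into a constant). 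Hence assume $|x_0^{d-l}-y_0^{d-l}|\ge\tfrac52\, t^{1/\alpha}$.

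Set $\rho=t^{1/\alpha}$ and pick $\theta_1\in\N$ with $\tfrac54 R_1\le|x_0^{d-l}-y_0^{d-l}|<\tfrac{10}{4} R_1$, where $R_1=2^{\theta_1}\rho$. Apply \autoref{l:2.1} with $f,g$ non-negative Borel functions supported in $\cQ(y_0,\rho/8)$ and $\cQ(x_0,\rho/8)$, with $a=b=t/2$, and with $U=\cQ(x_0,R_1/8)$, $V=\cQ(y_0,R_1/8)$. This reduces the lemma to establishing the direct analogue of \autoref{prop:main_twodim}: for every $x\in\cQ(x_0,\rho/8)$ and $\tau=\tau_U$,
\begin{align*}
\bE^{x}\big[\1_{\{\tau\le t/2\}}P_{t-\tau}f(X_\tau)\big]
\le C\,t^{-d/\alpha-n/2}\|f\|_1\exp\Big(-\tfrac{|\wt x_0-\wt y_0|^2}{c\,t}\Big)^{\eta}\prod_{i=d-l}^{d}\Big(1\wedge\tfrac{t}{|x_0^i-y_0^i|^\alpha}\Big)^{1+\alpha^{-1}}.
\end{align*}
The matching estimate for the symmetric term in \eqref{l:2.1ineq} follows by interchanging the roles of $x_0$ and $y_0$.

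To prove this estimate I would mimic \autoref{prop:main_twodim} line by line, with the role of the first coordinate now played by the $(d-l)$-th. Partition $\R^{d+n}$ into rings $A_k=\{z:|z^{d-l}-y_0^{d-l}|\in[2^k\rho,2^{k+1}\rho)\}$ and sum $\Phi(k):=\bE^x[\1_{\{\tau\le t/2\}}\1_{\{X_\tau\in A_k\}}P_{t-\tau}f(X_\tau)]$. For $k<\theta_1$, the L\'evy-system bound for jumps in the $(d-l)$-th coordinate (as in \eqref{upperexit}) gives $\bE^x[\1_{\{\tau\le t/2\}}\1_{\{X_\tau\in A_k\}}]\le c\,t\cdot 2^k\rho/R_1^{1+\alpha}$; for $k\ge\theta_1$, \autoref{prop:survival} gives $\bP^x(\tau\le t/2)\le c\,t/R_1^\alpha$. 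Plugging $\HHq{q}{l;\eta}$ (applied after reordering coordinates, using permutation invariance) into $p(t-\tau,X_\tau,y)$ for $y\in\cQ(y_0,\rho/8)$ produces the factor $2^{-k\alpha q}$ on the $(d-l)$-th coordinate, while the $l$ already-sharp factors are preserved because $|x_0^i-y_0^i|\ge|x_0^{d-l}-y_0^{d-l}|\gg\rho$ for $i>d-l$ forces $|X_\tau^i-y^i|\asymp|x_0^i-y_0^i|$. Summing the geometric series, the hypothesis $q>\alpha^{-1}$ gives the dominant contribution $\sum_{k<\theta_1}2^{(\theta_1-k)\alpha q}\cdot 2^{-\theta_1(1+\alpha)}2^k\asymp 2^{-\theta_1\alpha(1+\alpha^{-1}-q)}$, which upon invoking $R_1\asymp|x_0^{d-l}-y_0^{d-l}|$ collapses to the desired sharp factor $(t/R_1^\alpha)^{1+\alpha^{-1}}$ on the $(d-l)$-th coordinate.

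In case (ii) the Gaussian factor is produced by the Davies-type interpolation \eqref{suhk1}, now using $\HHq{0}{d;0}$ in place of $\HHq{0}{1;0}$. Each $A_k$ is split into a strip $S_k=\{z\in A_k:|\wt x_0-\wt z|\in(\tfrac12|\wt x_0-\wt y_0|,\tfrac32|\wt x_0-\wt y_0|)\}$ and its complement: on $S_k$ one applies a Davies-type bound on $\bP^x(X_s\in S_k)$ paralleling \eqref{key1}--\eqref{upper_gaussian}, which supplies the exponential factor, while on $A_k\setminus S_k$ the lower bound $|\wt z-\wt y|\ge\tfrac38|\wt x_0-\wt y_0|$ feeds the Gaussian factor directly into $\HHq{q}{l;\eta}$. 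The main obstacle is the simultaneous combination of three mechanisms, namely the promotion of the $(d-l)$-th coordinate, the preservation of the last $l$ sharp factors, and the Davies interpolation in the $n$ normal directions; since these act on disjoint groups of coordinates, however, the calculations parallel \autoref{prop:main_twodim} with only routine additional case distinctions concerning whether individual $|x_0^i-y_0^i|$ exceed $\rho$. Once the displayed estimate is established, continuity of $p$ together with the symmetric application of \autoref{l:2.1} yields $\HHq{0}{l+1;\eta}$.
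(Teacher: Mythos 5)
The reduction of the lemma to a key estimate via \autoref{l:2.1} with $U=\cQ(x_0,R_1/8)$, and the observation that the reduction case $|x_0^{d-l}-y_0^{d-l}|\le\tfrac52 t^{1/\alpha}$ is trivial, both match the paper's scheme ($j_0 = d-l$ in \autoref{prop:main_general} together with \autoref{lem:reduction_cases}). However, your proposed proof of the key estimate contains a genuine gap. You claim that ``the $l$ already-sharp factors are preserved because $|x_0^i-y_0^i|\ge|x_0^{d-l}-y_0^{d-l}|\gg\rho$ for $i>d-l$ forces $|X_\tau^i-y^i|\asymp|x_0^i-y_0^i|$.'' This is false in general: the exit from $\cQ(x_0,R_1/8)$ may occur through a jump in coordinate $i>d-l$, and such a jump can place $X_\tau^i$ anywhere — in particular near $y_0^i$, in which case the $i$-th factor of $P_{t-\tau}f(X_\tau)$ is of order $1$, not $(t/|x_0^i-y_0^i|^\alpha)^{1+\alpha^{-1}}$. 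The same issue makes your choice of $A_k$ inadequate: because your $A_k$ is a cylinder constraining only the $(d-l)$-th coordinate, the supremum of $P_{t-\tau}f$ over $A_k\cap\cQ_0^c$ is far larger than the bound you write, precisely because of the unconstrained coordinates.

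For $k<\theta_1$ your argument is sound, since there $X_\tau^{d-l}$ must have moved by at least $\sim R_1$ (the diffusion does not affect stable coordinates, so the exit jump is necessarily in coordinate $d-l$), and then $X_\tau^i=X_{\tau-}^i$ stays close to $x_0^i$ for all $i\neq d-l$. But for $k\gtrsim\theta_1$, exits via jumps in any coordinate $j\in\{1,\dots,d\}$ contribute, and for $j>d-l$ the probability of landing near $y_0^j$ must be weighed against the loss in the $j$-th factor. Doing this correctly requires a further decomposition of the exit event by jump direction, which is exactly what the paper's $\Phi^i_a(k)$ formalism and the $d$-dimensional sets $\cD_k$ from \cite{KKK19} (as opposed to your one-coordinate cylinders) provide; the bookkeeping then also has to track that, after sorting $|X_\tau^i-y^i|$, coordinate $d-l$ can itself be promoted to the ``large'' group when coordinate $j$ drops into the ``small'' group. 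This jump-direction decomposition is the key technical mechanism of \autoref{prop:main_general}, and dismissing it as ``routine additional case distinctions concerning whether individual $|x_0^i-y_0^i|$ exceed $\rho$'' misses where the real difficulty in passing from $d=1$ to general $d$ lies.
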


\begin{definition}\label{def:theta_and_R}
	Let $x_0, y_0 \in \R^{d+n}$ satisfy $|x_0^{i}-y_0^{i}|\le |x_0^{i+1}-y_0^{i+1}|$ for every $i\in\{1,2,\dots, d-1\}$. Let $t>0$, set $\rho:=t^{1/\alpha}$. For $i\in \{1, \dots, d\}$ define $\lengthR_i\in \Z$ and $R_i > 0$ such that 
	\begin{align}\label{d:thi_Ri}
	\frac{5}{4}2^{\lengthR_i} \le  \frac{|x_0^{i}-y_0^{i}|}{\rho}  < \frac{10}{4}2^{\lengthR_i} \qquad\mbox{and}\qquad	R_i=2^{\lengthR_i}\rho\,.
	\end{align}
	Then $\lengthR_i\le \lengthR_{i+1}$ and $R_i\le R_{i+1}$. We say that condition $\mathcal{R}(i_0)$ holds if 
	\begin{align}\label{eq:case-R-i0}
	\lengthR_1\le \ldots \le \lengthR_{i_0-1}\le 0<1\le \lengthR_{i_0}\le \ldots\le \lengthR_d\, 
	\tag*{$\mathcal{R}(i_0)$} .
	\end{align}
	We say that condition $\mathcal{R}(d+1)$ holds if $\lengthR_1\le \ldots \le \lengthR_d \leq 0 < 1$.
\end{definition}

\begin{lemma}\label{lem:reduction_cases}
Let $t > 0$ and $x_0, y_0$ be two points in $\R^{d+n}$ satisfying condition $\mathcal{R}(i_0)$ for $i_0 \in \{d-l+1, \ldots, d+1\}$.  Assume condition $\HHq{q}{l; \eta}$ holds for some $\eta\in\{0,n\}$, $l \in \{0,\ldots, d-1\}$ and $q \geq 0$. Then 
\begin{align}\label{eq:reduction_cases}
p(t,x_0, y_0) \le C t^{-d/\alpha-n/2}\exp\bigg(-\frac{|\wt{x_0}-\wt{y_0}|^2}{c\,t}\bigg)^{\eta/n}\prod_{i=1}^{d} \left(\frac{t}{|x_0^{i}-y_0^{i}|^\alpha}\wedge 	1\right)^{1+\alpha^{-1}}
\end{align}
holds for some constants $C, c > 0$ independent of $t$ and $x_0, y_0$.
\end{lemma}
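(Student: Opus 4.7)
The plan is to apply the hypothesis $\HHq{q}{l;\eta}$ directly to the pair $(t,x_0,y_0)$ and then invoke $\mathcal{R}(i_0)$ with $i_0\ge d-l+1$ to upgrade the exponent $q$ in the first $d-l$ factors into the desired exponent $1+\alpha^{-1}$. The crucial observation is that the constraint $i_0\ge d-l+1$ ensures every index $i\in\{1,\dots,d-l\}$ satisfies $i\le i_0-1$, so $\lengthR_i\le 0$ and the $i$-th factor is bounded between two positive constants; hence its exponent can be traded at the cost of a constant depending only on $\alpha$ and $q$.

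First, since the pair $(x_0,y_0)$ is ordered, $|x_0^1-y_0^1|\le\ldots\le|x_0^d-y_0^d|$, hypothesis $\HHq{q}{l;\eta}$ gives
\begin{align*}
p(t,x_0,y_0)\le C_0 t^{-d/\alpha-n/2}\exp\!\Big(-\tfrac{|\wt{x_0}-\wt{y_0}|^2}{ct}\Big)^{\!\eta/n}\prod_{i=1}^{d-l}\Big(1\wedge\tfrac{t}{|x_0^i-y_0^i|^\alpha}\Big)^{\!q}\prod_{i=d-l+1}^{d}\Big(1\wedge\tfrac{t}{|x_0^i-y_0^i|^\alpha}\Big)^{\!1+\alpha^{-1}}.
\end{align*}
The second product already carries the desired exponent, and the prefactor as well as the exponential factor coincide with those in \eqref{eq:reduction_cases}. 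It remains to modify only the first product.

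For each $i\in\{1,\dots,d-l\}$, since $i\le d-l\le i_0-1$, condition $\mathcal{R}(i_0)$ gives $\lengthR_i\le 0$, and by the definition \eqref{d:thi_Ri} of $\lengthR_i$ this forces $|x_0^i-y_0^i|<\tfrac{10}{4}\,t^{1/\alpha}$ (with the case $|x_0^i-y_0^i|=0$ being trivial). Consequently
\begin{align*}
\kappa_0:=\big(\tfrac{2}{5}\big)^{\alpha}\ \le\ 1\wedge\frac{t}{|x_0^i-y_0^i|^\alpha}\ \le\ 1,
\end{align*}
from which we obtain
\begin{align*}
\Big(1\wedge\tfrac{t}{|x_0^i-y_0^i|^\alpha}\Big)^{\!q}\ \le\ \kappa_0^{-(1+\alpha^{-1}-q)_{+}}\Big(1\wedge\tfrac{t}{|x_0^i-y_0^i|^\alpha}\Big)^{\!1+\alpha^{-1}}.
\end{align*}
Substituting this inequality into the previous display and absorbing the $d-l$ resulting constants into the prefactor produces exactly \eqref{eq:reduction_cases}.

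I do not anticipate a genuine obstacle: the lemma is purely a reindexing step whose role is to justify that, in the iterative scheme, it suffices to produce the conditions $\HHq{\cdot}{l;\eta}$ one at a time and then combine them with the range decomposition $\mathcal{R}(i_0)$. The only point requiring minor attention is the degenerate case $|x_0^i-y_0^i|=0$, which is handled trivially because the corresponding factor equals $1$.
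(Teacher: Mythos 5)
Your proposal is correct and is essentially the standard reduction the paper intends: the paper itself omits the argument, citing \cite[Lemma~3.2]{KKK19}, but the route you take --- applying $\HHq{q}{l;\eta}$ directly, then using $\mathcal{R}(i_0)$ with $i_0 \ge d-l+1$ to ensure $\lengthR_i \le 0$ for each $i \le d-l$, which by \eqref{d:thi_Ri} forces $1\wedge t|x_0^i-y_0^i|^{-\alpha} \in [(2/5)^\alpha,1]$, so that the exponent on these factors can be traded for $1+\alpha^{-1}$ at the cost of a uniform constant --- is exactly that reduction. The remark about the degenerate case $|x_0^i-y_0^i|=0$ is a sensible and correct addendum.
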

\begin{proof}
The proof is the same with that of \cite[Lemma 3.2]{KKK22}. Thus, we skip the proof.

\end{proof}

\medskip

For the proof of \autoref{lem:Gl-gen} and \autoref{lem:Fl-gen}, we use the following which is a key result in proving sharp upper bound.

\begin{proposition}\label{prop:main_general}
Let $\eta\in\{0,n\}$ and $\alpha \in (0,2)$. Assume that  $\HHq{q}{l; \eta}$ holds true for some $l\in \{0,1,\ldots, d-1\}$,  $q \in [0,1 + \alpha^{-1}]$. Assume further that either $\eta=0$ or the conjunction  $\eta=n$ and $\HHq{0}{d;0}$ hold.
Let $t > 0$, set $\rho=t^{1/\alpha}$. Consider  $x_0, y_0\in \R^{d+n}$ satisfying the condition $\mathcal{R}(i_0)$ for some $i_0 \in \{1, \dots, d-l\}$, and let $R_j=2^{\theta_j}\rho$ as defined in \eqref{d:thi_Ri}. Let $f$ be a non-negative Borel function on $\R^{d+n}$ supported in $B(y_0, \tfrac{\rho}{8})$.
Let $j_0 \in \{i_0,\ldots, d-l\}$ and define an exit time $\tau$ by $\tau=\tau_{B(x_0, {R_{j_0}}/{8})}$.
Then there exist $C, c>0$ independent of  $x_0, y_0$ and $t$ such that for every $x\in B(x_0, \frac{\rho}{8})$,
	\begin{align}\label{eq:main1}
	\begin{split}
	&\E^{x}\left[\1_{\{\tau\le t/2\}}P_{t-\tau}f(X_{\tau})\right]\\
	&\le \,C t^{-d/\alpha-n/2} \|f\|_1\exp\bigg(-\frac{|\wt {x_0}-\wt {y_0}|^2}{c\,t}\bigg)^{\eta/n} \prod_{j=d-l+1}^d\left(\frac{t}{|x_0^{j}-y_0^{j}|^{\alpha}} \wedge 1 \right)^{1+\alpha^{-1}} \\
	& \qquad \times\prod_{j=j_0+1}^{d-l}
	\left(\frac{t}{|x_0^{j}-y_0^{j}|^\alpha} \wedge 1 \right)^q \cdot 
	\begin{cases}
	\left(\tfrac{t}{| x_0^{j_0}-y_0^{j_0}|^\alpha} \wedge 1 \right)^{\frac12+q}
	&\mbox{ if } q<\alpha^{-1}\\
	\left(\tfrac{t}{|x_0^{j_0}-y_0^{j_0}|^\alpha} \wedge 1 \right)^{1+\alpha^{-1}}
	&\mbox{ if } q>\alpha^{-1}.
	\end{cases}
	\end{split}
	\end{align}
\end{proposition}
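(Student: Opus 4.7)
The plan is to follow the template of \autoref{prop:main_twodim}, with the $j_0$-th coordinate now playing the role that the first coordinate played in the two-dimensional case. Since every jump of $X$ takes place along one of the first $d$ coordinate axes, the process leaves $B(x_0,R_{j_0}/8)$ via such a jump, so I would partition the exit location by the size of the jump in the $j_0$-th direction. Define annular sets
\[
A_k:=\bigl\{z\in\R^{d+n}:|z^{j_0}-y_0^{j_0}|\in[2^k\rho,2^{k+1}\rho)\bigr\},\quad k\ge 1,
\]
together with $A_0$ covering $|z^{j_0}-y_0^{j_0}|<2\rho$, and decompose
\[
\bE^x\bigl[\1_{\{\tau\le t/2\}}P_{t-\tau}f(X_\tau)\bigr]=\sum_{k\ge 0}\Phi(k),\qquad \Phi(k):=\bE^x\bigl[\1_{\{\tau\le t/2\}}\1_{\{X_\tau\in A_k\}}P_{t-\tau}f(X_\tau)\bigr].
\]
Condition $\cR(i_0)$ with $j_0\in\{i_0,\ldots,d-l\}$ guarantees that for $z\in A_k$ and $y\in B(y_0,\rho/8)$ one has $|z^{j_0}-y^{j_0}|\gtrsim 2^k\rho$ and $|z^j-y^j|\asymp|x_0^j-y_0^j|$ for $j>j_0$, while the coordinates $j<i_0$ are harmless because the corresponding mini--max factors equal $1$.

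\textbf{Execution.} I would apply $\HHq{q}{l;\eta}$ to $p(t-\tau,z,y)$ for $z\in A_k$, $y\in\supp f$: the factors corresponding to coordinates $j\in\{j_0+1,\ldots,d-l\}$ produce $\prod(t/|x_0^j-y_0^j|^\alpha\wedge 1)^q$, the coordinates $j\in\{d-l+1,\ldots,d\}$ already yield the sharp $(1+\alpha^{-1})$-factor, and the $j_0$-th coordinate produces $2^{-k\alpha q}$. In the case $\eta=0$, combining this with the L\'evy system upper bound $\bE^x[\1_{\{\tau\le t/2\}}\1_{\{X_\tau\in A_k\}}]\le c\,tR_{j_0}^{-(1+\alpha)}2^k\rho$ for $k<\theta_{j_0}$, and with the crude survival bound $\bP^x(\tau\le t/2)\le ctR_{j_0}^{-\alpha}$ from \autoref{prop:survival} for $k\ge\theta_{j_0}$, followed by summing the resulting geometric series in $k$ separately for $q<\alpha^{-1}$ and $q>\alpha^{-1}$, gives the claim by the same computation as in \eqref{part1}--\eqref{part2}. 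In the case $\eta=n$, I would further decompose $\Phi(k)=\Phi_1(k)+\Phi_2(k)$ according to whether $X_\tau\in S:=\{z:r/2<|\wt z-\wt{x_0}|<3r/2\}$ with $r:=|\wt{x_0}-\wt{y_0}|$, exactly as in Case 2 of \autoref{prop:main_twodim}. The term $\Phi_2(k)$ inherits the Gaussian factor directly from $\HHq{q}{l;n}$ because the triangle inequality forces $|\wt z-\wt y|\ge 3r/8$; for $\Phi_1(k)$ with $k<\theta_{j_0}$, the L\'evy system reduction is paired with the bound $\int_0^t\bP^x(X_s\in B(x_0,R_{j_0}/8)\cap S)\,\rd s\le Ct\exp(-r^2/(2ct))$, itself derived by interpolating \autoref{theo:uhk} with $\HHq{0}{d;0}$ as in \eqref{suhk1} and then using an $n$-dimensional analogue of \eqref{exp_upper}; for $k\ge\theta_{j_0}$, an $n$-dimensional version of \eqref{upper_gaussian}, obtained from the Chebyshev-type inequality \eqref{key1} together with \cite[Lemma 3.8]{BBCK09}, handles the remaining tail.

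\textbf{Main obstacle.} The delicate point is the interpolation producing the analogue of \eqref{upper_gaussian} in the $n$-dimensional diffusive setting: after replacing the single polynomial factor $(1\wedge t/|\xi^1-\zeta^1|^\alpha)^{\beta(1+\alpha^{-1})}$ in \eqref{suhk1} by the product of $d-1$ such factors (one for each of the first $d$ coordinates except $j_0$, with exponent supplied by $\HHq{0}{d;0}$), one must verify that each can be integrated out to a constant via \eqref{eq_fin}, while the $n$-fold Gaussian integral yields $\exp(-cr^2/t)$ by \eqref{eq:ineq2}. Choosing the interpolation parameter $\varepsilon=1+\alpha^{-1}-q$ for $q\in(\alpha^{-1},1+\alpha^{-1})$ then produces exactly the claimed improvement in the $j_0$-th coordinate, and all remaining product factors indexed by $j>j_0$ appear as $k$-independent constants in every $\Phi(k)$, so the geometric-sum bookkeeping of \eqref{part1}--\eqref{part22} carries over unchanged.
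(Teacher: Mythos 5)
Your approach replaces the paper's decomposition with the much simpler family of shells
$A_k=\{z:|z^{j_0}-y_0^{j_0}|\in[2^k\rho,2^{k+1}\rho)\}$, indexed only by the $j_0$-th coordinate. The paper's proof instead imports from \cite{KKK19} the sets $\cD_k$ and, crucially, the \emph{direction-indexed} sets $A_k^i$ for $i\in\{1,\ldots,d\}$, which track along \emph{which} coordinate axis the exit jump is performed; the sums are then organized into $\cM_a=\sum_i\mathcal{S}_a(i)+\sum_i\mathcal{T}_a(i)+\Phi_a(0)$ via \cite[Remark 4.4]{KKK19}. This finer decomposition is not an optional convenience: when $d>1$ the process can leave $B(x_0,R_{j_0}/8)$ by a single long jump along any coordinate axis $j_1$, and the L\'evy system only gives you something sharp if you know $j_1$.

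This is where the gap appears. Your claim that ``$|z^j-y^j|\asymp|x_0^j-y_0^j|$ for $j>j_0$'' is justified only for $k<\theta_{j_0}$ (where the exit must be via a jump in coordinate $j_0$, so the other coordinates remain inside the ball). For $k\ge\theta_{j_0}$ the exit may be via a jump in some $j_1>j_0$, and then $z^{j_1}$ can land arbitrarily close to $y_0^{j_1}$, making the corresponding factor $(1\wedge t/|z^{j_1}-y^{j_1}|^\alpha)\approx 1$ rather than $\approx(t/R_{j_1}^\alpha)$. Consequently the only factor you can uniformly extract from $\HHq{q}{l;\eta}$ over all of $A_k$ is the $j_0$-th one, and your tail estimate collapses to $\sum_{k\ge\theta_{j_0}}\Phi(k)\lesssim t^{-d/\alpha-n/2}\|f\|_1\,(t/R_{j_0}^\alpha)^{q}\cdot\bP^x(\tau\le t/2)\lesssim t^{-d/\alpha-n/2}\|f\|_1\,(t/R_{j_0}^\alpha)^{q+1}$. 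This is strictly larger than the target for the tail, which also carries the products $\prod_{j>j_0}^{d-l}(t/R_j^\alpha)^q\cdot\prod_{j>d-l}^{d}(t/R_j^\alpha)^{1+\alpha^{-1}}$; since $R_j\ge R_{j_0}$ for $j>j_0$ these extra factors can be arbitrarily small, so $(t/R_{j_0}^\alpha)^{q+1}$ cannot absorb them. To recover them one must track the exit direction $j_1$ and, for each $j_1$, balance the smallness of the L\'evy-system probability of landing near $y_0^{j_1}$ against the weakness of the heat-kernel factor there --- which is exactly what the $A_k^i$ (and the KKK19 structure of $\cD_k$) encode. Your Gaussian-factor analysis for the $n$ diffusive coordinates is in order and mirrors Case 2 of \autoref{prop:main_twodim}, but the jump-coordinate bookkeeping needs the full KKK19 decomposition before the argument closes.
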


\medskip

\subsection{Proof of \autoref{prop:main_general}, \autoref{lem:Gl-gen} and \autoref{lem:Fl-gen}}\label{s:proof}{\ }

\medskip

In this section we will explain how the proofs of \autoref{prop:main_general}, \autoref{lem:Gl-gen} and \autoref{lem:Fl-gen} can be derived along the same lines as in \cite{KKK22}. Since the mains ideas have been already been demonstrated in \autoref{sec:upper-dn-one} when considering the special case $d=n=1$, we here limit ourselves to those parts of the proofs where the application of \cite{KKK22} is not direct. 

\medskip

\begin{proof}[Proof of \autoref{prop:main_general}]
Let $l\in\{0,1,\dots, d-l\}$, $t>0$ and $x_0,y_0\in\R^{d+n}$ satisfy $\cR(i_0)$ for some $i_0 \in \{1, \dots, d-l\}$. For $d>1$, we define subset $\cD_k$ of $\R^{d+n}$ as follows: for $k\in\N_0$ and $\rho=t^{1/\alpha}$,
\begin{align}\label{def_Dk_gen}
\cD_k:=D_k\times\R^n \,,
\end{align}
where $D_k^{\gamma, \switch}, D_k^{\gamma}, D_k (\subset \R^d)$ are the same set defined in \cite{KKK22}.  Using $\cD_k$, we define
$$A_k=y_0+\rho \cD_k.$$
Then, it is easy to see that $\cup_{k=0}^{\infty} A_k=\R^{d+n}$.
For $k\in\N_0$, $j_0 \in \{i_0, \dots, d\}$ and $s(j_0)=R_{j_0}/8$, set 
\begin{align*}
A_k^{i}&:=A_k^{i}(j_0):=(y_0+\cD_k)\cap \bigcup_{u\in\overline{\cQ(x_0, s(j_0)/8)}}\{u+h\e^i|h\in\R\}.
\end{align*}
Let $S:=\{z\in \R^{d+n}: \frac12|\wt{x_0}-\wt{y_0}|<|\wt{x_0}-\wt z|<\frac32|\wt{x_0}-\wt{y_0}|\}$ and $S_k^i:=S\cap A_k^i$.
Let $C_3:=8^{2/\alpha}$. Then, $C_3^{-\alpha/2}=\frac18$.
For $x\in \cQ(x_0, \rho/8)$ and $\tau=\tau_{\cQ(x_0, s(j_0)/C_3)}$, set
\begin{alignat*}{2}
\Phi(k)&=\bE^x\big[\1_{\{\tau\le t/2\}}\1_{\{X_{\tau}\in A_k\}}P_{t-\tau}f(X_{\tau})\big],\quad&&\mbox{for}\;\; k\in\N_0,\\
\Phi^i(k)&=\bE^x\big[\1_{\{\tau\le t/2\}}\1_{\{X_{\tau}\in A_k^{i}\}}P_{t-\tau}f(X_{\tau})\big],\quad&&\mbox{for}\;\; k\in\N_0,\;i\in\{1,2,\dots, d\},\\
\Phi^i_1(k)&=\bE^x\big[\1_{\{\tau\le t/2\}}\1_{\{X_{\tau}\in S_k^{i}\}}P_{t-\tau}f(X_{\tau})\big],\quad&&\mbox{for}\;\; k\in\N_0,\;i\in\{1,2,\dots, d\},\\
\Phi^i_2(k)&=\bE^x\big[\1_{\{\tau\le t/2\}}\1_{\{X_{\tau}\in A_k^{i}\setminus S\}}P_{t-\tau}f(X_{\tau})\big],\quad&& \mbox{for}\;\;k\in\N_0,\;i\in\{1,2,\dots, d\},\\
\Phi_1(0)&=\bE^x\big[\1_{\{\tau\le t/2\}}\1_{\{X_{\tau}\in S_0\}}P_{t-\tau}f(X_{\tau})\big],\\
\Phi_2(0)&=\bE^x\big[\1_{\{\tau\le t/2\}}\1_{\{X_{\tau}\in A_0\setminus S\}}P_{t-\tau}f(X_{\tau})\big].
\end{alignat*}
Then, we can write 
\begin{align*}
\bE^{x}\left[\1_{\{\tau\le t/2\}}P_{t-\tau}f(X_{\tau})\right]
&=\sum_{k=0}^{\infty}\Phi(k)
=\sum_{k=1}^{\infty}\left(\sum_{i=1}^{d}\big(\Phi^i_1(k)+\Phi^i_2(k)\big)\right)+\Phi_1(0)+\Phi_2(0)\nn\\
&=\left(\sum_{k=1}^{\infty}\Phi^d_1(k) + \sum_{k=1}^{\infty}\Phi^{d-1}_1(k)  +\dots+   \sum_{k=1}^{\infty}\Phi^1_1(k)+\Phi_1(0)\right) \\
&\quad\quad   +\left(\sum_{k=1}^{\infty}\Phi^d_2(k) + \sum_{k=1}^{\infty}\Phi^{d-1}_2(k)  +\dots+   \sum_{k=1}^{\infty}\Phi^1_2(k)+\Phi_2(0)\right)\nn\\
&=:\cM_1+\cM_2.
\end{align*}
For $a,b \in \N$, set
$\TT{a}{b}:=\Tj{a}{b}$. Then, by \cite[Remark 4.4]{KKK22}, for $a=1,2,$
\begin{align} \label{eq:main}
\begin{split}
\cM_a 
&=  \sum_{k=\TT{j_0}{d}-\lengthR_d}^{\infty}\Phi^d_a(k) + \sum_{k=\TT{j_0}{d}-\lengthR_{d-1}}^{\infty}\Phi^{d-1}_a(k) + \ldots + \sum_{k=\TT{j_0}{d}-\lengthR_{j_0}}^{\infty}\Phi^{j_0}_a(k)\\
& \qquad +\sum_{k=\TT{j_0}{d}}^{\infty}\Big(\Phi^{j_0-1}_a(k)+\ldots+\Phi^1_a(k)\Big)+\Phi_a(0) \\
&=\sum_{i={j_0}}^d\mathcal{S}_a(i)+\sum_{i={1}}^{j_0-1}\mathcal{T}_a(i)+\Phi_a(0),
\end{split}
\end{align}
where $\mathcal{S}_a(i):=\sum_{k=\TT{j_0}{d}-\lengthR_i}^{\infty}\Phi^i_a(k)$
and $\mathcal{T}_a(i):=\sum_{k=\TT{j_0}{d}}^{\infty}\Phi^i_a(k)$.

\medskip

We will find the upper bounds of $\mathcal{S}_a(i)$, $\mathcal{T}_a(i)$ and $\Phi_a(0)$ for $a=1, 2$. 
We first consider the case that $\eta=0$.
Under the condition $\HHq{q}{l; 0}$, we follow the proofs in \cite{KKK22} to obtain that for $a=1,2$, $\mathcal{S}_a(i)$, $\mathcal{T}_a(i)$ and $\Phi_a(0)$ are bounded above by the right hand side of \eqref{eq:main1}. Thus, we obtain \autoref{prop:main_general} for the case that $\eta=0$. 

Next, we consider the case that $\eta=n$ and $\HHq{0}{d; 0}$ hold. Then, by \autoref{lem:reduction_cases}, we obtain the following rough heat kernel upper bound:
\begin{align}\label{rough_hke_gen}
p(t, \xi, \zeta)&\le C t^{-d/\alpha-n /2}\prod^{d}_{i=1} \bigg(1\wedge\frac{t}{|\xi^i-\zeta^i|^{\alpha}}\bigg)^{1+\alpha^{-1}}\quad \text{for all}\;\;\xi,\zeta\in \R^{d+n}.
\end{align} 
By \eqref{rough_hke_gen} and \autoref{theo:uhk}, we see that for any $\beta\in(0,1)$
	\begin{align*}
	p(t,\xi,\zeta)&=p(t,\xi,\zeta)^{1-\beta}p(t,\xi,\zeta)^{\beta}\nn\\
	&\le C t^{-n/2-d/\alpha}\exp\Big(-\frac{(1-\beta)|\wt\xi-\wt\zeta|^2}{c\,t}\Big)\prod^{d}_{i=1}\left(1\wedge \frac{t}{|\xi^i-\zeta^i|^{\alpha}}\right)^{\frac{1-\beta}{3}+\beta(1+\alpha^{-1})},
	\end{align*}
which is a key observation to deal with $\mathcal{S}_1(i)$, $\mathcal{T}_1(i)$ and $\Phi_1(0)$. By this and the condition $\HHq{q}{l; n}$, we can follow the argument in \cite{KKK22} and apply the method of proof for the case $d=n=1$ to obtain upper bounds for $\mathcal{S}_1(i)$, $\mathcal{T}_1(i)$ and $\Phi_1(0)$. For $\mathcal{S}_2(i)$, $\mathcal{T}_2(i)$ and $\Phi_2(0)$, we just follow the argument in \cite{KKK22} as in the case of $\eta=0$. Then, we obtain the desired upper bounds.
\end{proof}

\medskip

\begin{proof}[Proof of \autoref{lem:Gl-gen} and \autoref{lem:Fl-gen}]
\autoref{prop:main_general} allows us to deduce \autoref{lem:Gl-gen} and \autoref{lem:Fl-gen} in the same way as the corresponding lemmas are deduced in \cite{KKK22}, see the proofs of Lemma 2.5, Lemma 2.6 and Lemma 2.7 therein.
\end{proof}


\end{document}